\documentclass[preprint,12pt]{elsarticle}

\usepackage{amscd}
\usepackage{amsmath}
\usepackage{amsthm}
\usepackage{amsfonts}
\usepackage{graphicx}
\usepackage{amssymb}
\usepackage{color}
\usepackage{amsbsy}
\usepackage{graphicx}
\usepackage{amssymb,color,amsbsy}
\usepackage{mathtools}
\usepackage{leftindex}

\usepackage{float}
\usepackage{comment}

\usepackage{stmaryrd}

\usepackage[ruled]{algorithm2e}

\usepackage[matrix,arrow]{xy}

\DeclareMathAlphabet{\mathpzc}{OT1}{pzc}{m}{it}

\usepackage{pgfpages}
\usepackage{tikz}
\usetikzlibrary{shapes.geometric}
\usetikzlibrary{decorations.pathmorphing}
\usetikzlibrary{arrows}
\usetikzlibrary{backgrounds}
\usetikzlibrary{positioning}
\usetikzlibrary{fit}
\usepackage{caption}
\usepackage{amsthm}

\usepackage{amsmath}

\usepackage[pagebackref=true, bookmarksopen=true,colorlinks=true, linkcolor=red,citecolor=blue]{hyperref}

\usepackage[capitalise]{cleveref}  

\global\long\def\ii{\cap}%

\global\long\def\U{\bigcup}%

\global\long\def\sm{\sum}%

\global\long\def\ñ{\sim}%

\global\long\def\Sa#1{\textnormal{Sachs}(#1)}%

\global\long\def\SSa#1{\textnormal{Sachs}^{\textnormal{prk}}(#1)}%

\global\long\def\prk#1{\textnormal{prk}(#1)}%

\global\long\def\perm#1{\textnormal{perm}(#1)}%

\global\long\def\a#1{\left|#1\right|}%

\usepackage{tikz-cd}

\usepackage{tkz-graph}

\usepackage{multicol}

\usepackage{subcaption}

\newtheorem{theorem}{Theorem}[section]

\newtheorem{corollary}[theorem]{Corollary}

\newtheorem{lemma}[theorem]{Lemma}

\newtheorem{problem}[theorem]{Problem}

\usepackage{comment}
\begin{document}

\begin{abstract}
	Several graph decompositions that factorize the determinant of the adjacency matrix isolate a K\H{o}nig-Egerv\'ary part, such as the SD--KE decomposition and the critical independence decomposition of Larson. This suggests that the study of graph unimodularity can be approached, to a large extent, through the structure of K\H{o}nig-Egerv\'ary graphs. In this paper we advance this point of view by introducing a new determinant factorization inside the class of K\H{o}nig-Egerv\'ary graphs. More precisely, given a K\H{o}nig-Egerv\'ary graph $G$, we consider the partition of $V(G)$ into its perfect-flower part $PF(G)$ and its perfect-flower-free part $PFF(G)$, and prove that
	\[
	\det(G)=\det(G[PF(G)])\det(G[PFF(G)]).
	\]
	We also obtain the analogous factorization for the permanent. This decomposition provides a new tool for the study of unimodularity, reducing the problem to two induced subgraphs of a very different nature: the graph $G[PF(G)]$, whose structure is closely related to Sterboul--Deming configurations with perfect matching, and the graph $G[PFF(G)]$, which is governed by the theory of critical independent sets. In this way, the paper gives a new structural framework for the study of unimodular graphs through K\H{o}nig-Egerv\'ary theory.
\end{abstract}

\begin{keyword}  Sachs subgraph; K\H{o}nig-Egerváry graphs; Perfect matching; Graph decomposition	\MSC 05C70, 05C75 \end{keyword}
\begin{frontmatter} 
	\title{On the Determinant of K\H{o}nig--Egerváry Graphs}


\author[DEPTO]{Kevin Pereyra} 	\ead{kdpereyra@unsl.edu.ar, kevin.pereyra767@gmail.com}




	\address[DEPTO]{Departamento de Matem\'atica, Universidad Nacional de San Luis, San Luis, Argentina.} 	
	
	\date{Received: date / Accepted: date} 
	
\end{frontmatter} %

\section{Introduction}

Let $\alpha(G)$ denote the cardinality of a maximum independent set,
and let $\mu(G)$ be the size of a maximum matching in $G=(V,E)$.
It is known that $\alpha(G)+\mu(G)$ equals the order of $G$,
in which case $G$ is a König--Egerváry graph 
\cite{deming1979independence,gavril1977testing,stersoul1979characterization}.
K\H{o}nig-Egerv\'{a}ry graphs have been extensively studied
\cite{bourjolly2009node,jarden2017two,levit2006alpha,levit2012critical,jaume5404413kr}.
It is known that every bipartite graph is a König--Egerváry graph 
\cite{egervary1931combinatorial}. These graphs were independently introduced by Deming \cite{deming1979independence}, Sterboul \cite{stersoul1979characterization}, and \cite{gavril1977testing}.

The term subgraph in here is understood as a subgraph defined by a
graph and a given matching in that graph. In \cite{edmonds1965paths}, Edmonds introduced the following concepts
relative to a matching $M$ of a graph $G$ and its subgraphs. An
$M$-blossom of $G$ is an odd cycle of length $2k+1$ with $k$ edges
in $M$. The vertex not saturated by $M$ in the cycle is called
the \textit{base} of the blossom. An $M$-stem is an $M$-alternating
path of even length (possibly zero) connecting the base of the blossom
with a vertex not saturated by $M$ in $G$. The base is the only common vertex
between the blossom and the stem. An $M$-flower is a blossom joined
with a stem. The vertex not saturated by $M$ in the stem is called
the \textit{root} of the flower.

In \cite{stersoul1979characterization}, Sterboul introduced the concept
of a \textit{posy} for the first time. An $M$-posy consists of two (not necessarily disjoint) $M$-blossoms
	joined by an M-alternating path that starts and ends with edges in M. The endpoints of the path are the bases
	of the two blossoms. There are no internal vertices of the path in
	the blossoms. 

Sterboul \cite{stersoul1979characterization} was the first to characterize
König--Egerváry graphs via forbidden configurations relative to
a maximum matching. Subsequently, Korach, Nguyen,
and Peis \cite{korach2006subgraph} reformulated this characterization
in terms of simpler configurations, unifying the structures of
flowers and posies. Later, Bonomo et al. \cite{bonomo2013forbidden}
obtained a purely structural characterization based on forbidden subgraphs.
More recently, in \cite{jaume2025confpart3,jaume2025confpart2,jaume2025confpart1},
results were obtained that simplify working with flower and posy
structures.

\begin{theorem}[\cite{stersoul1979characterization}]\label{safe}
	For a graph $G$, the following properties are equivalent: 
	\begin{itemize}
		\item $G$ is a non-K\H{o}nig-Egerváry graph.
		\item For every maximum matching $M$, there exists an $M$-flower or an $M$-posy
		in $G$.
		\item For some maximum matching $M$, there exists an $M$-flower or an $M$-posy
		in $G$.
	\end{itemize}
\end{theorem}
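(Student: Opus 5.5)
We prove the cycle of implications: the second item implies the third, the third implies the first, and the first implies the second. The first of these is trivial, since every graph has a maximum matching. The essential tool for the other two is the standard reformulation of the König--Egerváry property relative to a maximum matching $M$. Since $\alpha(G)+\mu(G)\le |V(G)|$ always holds, $G$ is König--Egerváry if and only if there is an independent set $S$ with $|S|=|V(G)|-|M|$. Such an $S$ must contain every $M$-unsaturated vertex and exactly one endpoint of each edge of $M$. Equivalently, $G$ is König--Egerváry if and only if we can 2-color each $M$-edge (one endpoint ``in'' $S$, one ``out''), put all exposed vertices ``in'', and have no edge joining two ``in'' vertices. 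This is a 2-SAT-type constraint system, and we analyse it through $M$-alternating structures.

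\textbf{Third implies first.} Suppose $M$ is a maximum matching with an $M$-flower or $M$-posy, and suppose for contradiction that an independent $S$ as above exists. Along an $M$-alternating path that leaves an ``in'' vertex by a non-matching edge, the next vertex must be ``out''. Its $M$-partner is then ``in'', so membership alternates, forced along the path. In a flower, start at the root, which is exposed and hence ``in''. Propagating along the even stem, the base is ``in''. Going around the blossom of length $2k+1$ in both directions forces the two neighbours of the base on the cycle to be ``out''. Continuing, the parity of the odd cycle eventually forces two adjacent cycle vertices both ``in'', a contradiction. For a posy, the alternating path between the two bases starts and ends with $M$-edges and has odd length. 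Hence one base is ``in'' and the other ``out'' under any choice. The blossom whose base is ``in'' yields the same contradiction as in the flower case.

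\textbf{First implies second.} This is the main obstacle. Fix an arbitrary maximum matching $M$ and assume $G$ has neither an $M$-flower nor an $M$-posy; we construct $S$. We build an implication digraph: for each $M$-edge $uv$, the literal ``$u$ in'' implies ``$v$ out'', and each non-matching edge $xy$ forces ``$x$ in'' to imply ``$y$ out'', hence ``$y$'s partner in''. Chains of implications correspond to $M$-alternating walks. First, force in all vertices reachable from exposed vertices by even alternating walks. A conflict, meaning a vertex forced both ``in'' and ``out'', would yield an odd alternating closed structure. We then extract a genuine blossom with a stem (a flower) from that walk, by the standard Edmonds argument of taking the first repeated vertex. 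Maximality of $M$ ensures no augmenting path, so two exposed vertices cannot be forced inconsistently through an odd alternating path. For the remaining $M$-edges not yet determined, 2-SAT solvability fails only if some literal implies its negation and vice versa. That means two odd alternating cycles linked by an alternating path beginning and ending with $M$-edges, and after shortcutting to make the path internally disjoint from the blossoms, this is an $M$-posy. The delicate step is this shortcutting of walks into simple blossoms and paths meeting the definitions exactly, for which we follow Edmonds' blossom-shrinking argument. Hence a consistent assignment exists, giving $\alpha(G)+\mu(G)=|V(G)|$, contradicting the hypothesis that $G$ is not König--Egerváry.
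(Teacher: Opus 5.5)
The paper does not prove this theorem; it is quoted from Sterboul. Your proposal therefore has to stand on its own.

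\textbf{What works.} Your 2-SAT reformulation is correct: for a maximum matching $M$, $G$ is K\H{o}nig--Egerv\'ary iff some independent set contains every $M$-exposed vertex and exactly one end of each $M$-edge. The implications (2)$\Rightarrow$(3)$\Rightarrow$(1) are essentially fine, with one slip in the posy case. It is not true that one base is ``in'' and the other ``out''. Forcing only propagates from an ``in'' vertex across a non-matching edge. What you actually get is that if one base is ``out'', its $M$-partner is ``in'' and the path then forces the other base ``in''. So at least one base is ``in'', which is all the blossom argument needs. The propagation-conflict case of (1)$\Rightarrow$(2) is also fine. It yields an odd alternating walk between exposed vertices, and the standard Edmonds lemma gives either an augmenting path (impossible) or an $M$-flower.

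\textbf{The gap: the posy case of (1)$\Rightarrow$(2).} After propagation, an unsatisfiability certificate is an undetermined $M$-edge $uv$ together with two odd closed $M$-alternating walks, one at $u$ and one at $v$, each beginning and ending with a non-matching edge. The first-repeated-vertex argument does turn each walk separately into a blossom $B_u$ (resp.\ $B_v$) plus an even alternating path $P_u$ from $u$ (resp.\ $P_v$ from $v$) to its base, meeting that blossom only at the base. But nothing prevents $P_v\cup B_v$ from running through $P_u\cup B_u$. A posy needs the walk ``$P_u$ backwards, then $uv$, then $P_v$'' to be a genuine path whose internal vertices avoid both blossoms; only the blossoms themselves may overlap. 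This must hold for the fixed $M$, since (2) quantifies over every maximum matching, so you cannot rotate $M$ along alternating cycles.

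\textbf{Why shortcutting fails.} Cutting at the first intersection does not work in general. Suppose the alternating path from $v$ first meets $P_u\cup B_u$ at $u$ itself, or at an even-position vertex of $P_u$, via a non-matching edge. The truncated configuration is then a flower-like structure with an even alternating cycle attached, and it can be K\H{o}nig--Egerv\'ary on its own. For example, take the triangle $c_1c_2c_3$, the path $c_1p_1u$, the $4$-cycle $uvt_1t_2$, and $M=\{c_2c_3,c_1p_1,uv,t_1t_2\}$. Here $\{c_2,p_1,v,t_2\}$ is independent of size $4$. So the obstruction carried by the discarded part of $v$'s walk must be recovered. That requires a minimal-certificate choice and a case analysis on where, and at which parity, the two structures meet; this is the real content of Sterboul's theorem.

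Appealing to ``Edmonds' blossom-shrinking argument'' does not supply this step. Edmonds' lemma handles a single odd walk issuing from exposed vertices, not two interacting walks at the ends of a matched edge with no exposed vertex available. It is also unclear how contracting $B_u$ would preserve, and later let you lift, a posy, since the contraction removes exactly the odd cycle that makes $u$ bad. As it stands, the hard direction is asserted rather than proved.
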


The set of vertices of $G$ lying in a flower or posy, for any maximum matching,
is denoted by $SD(G)$, and we write $KE(G)=V(G)-SD(G)$.
The sets $SD(G)$ and $KE(G)$ constitute the SD--KE decomposition of the graph. 
A graph $G$ is called a Sterboul–Deming graph if $KE(G)=\emptyset$. Essentially, it can be regarded as the structural counterpart of a K\H{o}nig-Egerváry graph. Characterizations of Sterboul–Deming graphs can be found in \cite{kevinSDKECHAR}.

The SD--KE decomposition admits a factorization for the determinant of
the adjacency matrix of a graph \cite{jaume2025confpart1,KEVINnota}, that is,
\[
\det(G)=\det(G[KE(G)])\det(G[SD(G)]).
\]
\noindent We say that a graph is \emph{unimodular} if the determinant of its adjacency matrix is $\pm 1$. The above naturally reduces the problem of studying
the unimodularity of graphs to studying the unimodularity of
K\H{o}nig-Egerváry or Sterboul-Deming graphs. Unimodular Sterboul-Deming
graphs have been studied in \cite{PANE1,PANE2}. Another graph decomposition
that factorizes the determinant is the one known as the critical independence
decomposition of Larson \cite{larson2011critical}, which partitions the graph
into a K\H{o}nig-Egerváry graph and a 2-bicritical graph \cite{pulleyblank1979minimum}. 

In this work we advance the study of the unimodularity of
K\H{o}nig-Egerváry graphs via decompositions.
 
Given a matching \(M\) of \(G\), an \(M\)-\emph{perfect flower} is a pair
\((C,P)\) with the following properties. The subgraph \(C\) is an odd
cycle of length \(2k+1\) containing exactly \(k\) edges of \(M\). The path
\(P\) is a non-trivial \(M\)-alternating path, say
\(P=p_1,p_2,\ldots,p_t\), whose first and last edges belong to \(M\).
Moreover,
\[
V(C)\cap V(P)=\{p_1\}.
\]
Thus paths of length one are allowed in the definition of an
\(M\)-perfect flower; in that case the unique edge of the path is both
the first and the last edge, and hence it must belong to \(M\). Paths of
length zero are not allowed in this definition.

We define the \emph{perfect-flower part} of $G$
as the set of vertices of $G$ that belong to some $M$-perfect flower,
for some matching $M$, and we denote it by $PF(G)$, that is
\[
PF(G):=\left\{\, v\in V(G) : 
\begin{aligned}
	&v \text{ belongs to an } M\text{-perfect--flower} \\
	&\text{for some maximum matching } M
\end{aligned}
\right\}.
\]
 And define the \emph{perfect-flower-free}
\emph{part} of $G$ as $$PFF(G):=V(G)-PF(G).$$ \noindent This decomposition was first defined
in \cite{KEVINPerfectFlower} in order to study the structure of
critical independent sets in K\H{o}nig-Egerváry graphs.

In this work we prove that, for every K\H{o}nig--Egerv\'ary graph \(G\), the
perfect-flower decomposition of the vertex set is compatible with both the
determinant and the permanent of the adjacency matrix. More precisely, if
\[
V(G)=PF(G)\cup PFF(G),
\]
then
\[
\det(G)
=
\det(G[PF(G)])\det(G[PFF(G)])
\]
and
\[
\operatorname{perm}(G)
=
\operatorname{perm}(G[PF(G)])
\operatorname{perm}(G[PFF(G)]).
\]

The determinant factorization is the one relevant to unimodularity. Indeed,
within the class of K\H{o}nig--Egerv\'ary graphs, it reduces the problem of
studying whether \(\det(G)=\pm1\) to the corresponding induced subgraphs
\(G[PF(G)]\) and \(G[PFF(G)]\). In particular, this reduction separates two
induced subgraphs with different structural features: the structure of
\(G[PF(G)]\) is closely related to that of Sterboul--Deming graphs with a
perfect matching \cite{PANE1}, whereas the structure of \(G[PFF(G)]\) can
be controlled through the theory of critical independent sets
\cite{KEVINPerfectFlower}.

The permanent factorization has a complementary enumerative meaning. By
the Sachs expansion recalled in \cref{harary}, \(\operatorname{perm}(G)\) is
the positive weighted sum of the Sachs subgraphs of \(G\), where a Sachs
subgraph with \(m\) cycle components contributes \(2^m\). Hence the
permanent identity says that this weighted enumeration splits independently
over the two parts \(PF(G)\) and \(PFF(G)\). In this sense, the permanent
factorization is not merely an algebraic analogue of the determinant
factorization; it records the decomposition of the Sachs structure itself.

Consequently, the present theorem should be interpreted as an internal
reduction for the class of K\H{o}nig--Egerv\'ary graphs. For arbitrary graphs,
its use should be understood in combination with previous decompositions
that isolate a K\H{o}nig--Egerv\'ary part, such as the SD--KE decomposition
discussed above or Larson's critical independence decomposition
\cite{larson2011critical}. Thus the broader application to general graphs is
indirect: the factorization proved here applies to the K\H{o}nig--Egerv\'ary
part produced by such decompositions.

The paper is organized as follows. Section 2 contains the terminology and auxiliary results used throughout the paper. Section 3 contains the main results. Section 4 is devoted to concluding remarks and open problems.

\section{Preliminaries}\label{sss1}

All graphs considered in this paper are finite, undirected, and simple. 
For any undefined terminology or notation, we refer the reader to 
Lovász and Plummer \cite{LP} or Diestel \cite{Distel}.

Let \( G = (V, E) \) be a simple graph, where \( V = V(G) \) is the finite set of vertices and \( E = E(G) \) is the set of edges, with \( E \subseteq \{\{u, v\} : u, v \in V, u \neq v\} \). We denote the edge \( e=\{u, v\} \) as \( uv \). A subgraph of \( G \) is a graph \( H \) such that \( V(H) \subseteq V(G) \) and \( E(H) \subseteq E(G) \). A subgraph \( H \) of \( G \) is called a \textit{spanning} subgraph if \( V(H) = V(G) \). For two vertex sets $X,Y\subseteq V(G)$, we denote by $E(X,Y)$ the set of
edges $uv\in E(G)$ such that $u\in X$ and $v\in Y$.

Let \( e \in E(G) \) and \( v \in V(G) \). We define \( G - e := (V, E - \{e\}) \) and \( G - v := (V - \{v\}, \{uw \in E : u,w \neq v\}) \). If \( X \subseteq V(G) \), the \textit{induced} subgraph of \( G \) by \( X \) is the subgraph \( G[X]=(X,F) \), where \( F:=\{uv \!\in\! E(G) : u, v \!\in \! X\} \). The union of two graphs $G$ and $H$ is the graph $G\cup H$
with $V(G\cup H)=V(G)\cup V(H)$ and $E(G\cup H)=E(G)\cup E(H)$. If $M$ is a set of edges of $G$, we denote by $G[M]$ the subgraph of $G$
spanned by the edges of $M$, that is,
$V(G[M])=\{v\in V(G): \text{$v$ is an endpoint of some edge in } M\}$ and $E(G[M])=M.$

The number of vertices in a graph $G$ is called the \textit{order} of the graph and denoted by $\left|G\right|$.
A \textit{cycle} in $G$ is called \textit{odd} (resp. \textit{even}) if it has an odd (resp. even) number of edges.

A \textit{matching} \(M\) in a graph \(G\) is a set of pairwise non-adjacent edges. The \textit{matching number} of \(G\), denoted by  \(\mu(G)\), is the maximum cardinality of any matching in \(G\). Matchings induce an involution on the vertex set of the graph: \(M:V(G)\rightarrow V(G)\), where \(M(v)=u\) if \(uv \in M\), and \(M(v)=v\) otherwise. If \(S, U \subseteq V(G)\) with \(S \cap U = \emptyset\), we say that \(M\) is a matching from \(S\) to \(U\) if \(M(S) \subseteq U\). A matching $M$ is \emph{perfect} if $M(v)\neq v$ for every vertex
of the graph. Let $M$ be a matching of a graph $G$. 
A vertex set \( S \subseteq V \) is \textit{independent} if, for every pair of vertices \( u, v \in S \), we have \( uv \notin E \). 
The number of vertices in a maximum independent set is denoted by \( \alpha(G) \).  
It is well known that the symmetric difference of two matchings has a very simple structure.

\begin{lemma}\label{lematrivial}
	Let $M_{1}$ and $M_{2}$ be two maximum matchings of a graph $G$, and let
	\[
	H=(V(M_{1}\triangle M_{2}),\, M_{1}\triangle M_{2}).
	\]
	Then every connected component of $H$ is one of the following:
	\begin{enumerate}
		\item an even cycle whose edges alternate between $M_{1}$ and $M_{2}$;
		
		\item a path of even length whose edges alternate between $M_{1}$ and $M_{2}$; in this case, one end-vertex is saturated by $M_{1}$ and unsaturated by $M_{2}$, while the other is saturated by $M_{2}$ and unsaturated by $M_{1}$.
	\end{enumerate}
\end{lemma}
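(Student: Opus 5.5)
The plan is the standard symmetric-difference argument. Set $H$ to be the graph on the vertex set $V(M_1\triangle M_2)$ with edge set $M_1\triangle M_2$. I first bound the degrees in $H$. Each vertex $v$ of $H$ lies on at most one edge of $M_1$ and at most one edge of $M_2$. An edge lying in both matchings is removed by the symmetric difference, so $v$ has degree $1$ or $2$ in $H$. A connected graph with maximum degree at most $2$ is a path or a cycle, so every component of $H$ is a path or a cycle.

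Next I show the edges alternate. Two edges of $H$ that share a vertex cannot both come from $M_1$, nor both from $M_2$, because each $M_i$ is a matching. Hence along any path or cycle component the edges alternate between $M_1\setminus M_2$ and $M_2\setminus M_1$. For a cycle, alternation around the cycle forces its length to be even, which gives case 1.

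For a path component $P$, I use maximality of the matchings, and this is the only place it enters. Suppose $P$ has odd length. Then both its end edges lie in the same matching, say $M_1$, and $P$ contains one more $M_1$-edge than $M_2$-edges. Each endpoint of $P$ has degree $1$ in $H$, so it is incident with no edge of $M_2\setminus M_1$. It is also not incident with an edge of $M_1\cap M_2$, since it is already covered by an $M_1\setminus M_2$ edge. So both endpoints are $M_2$-unsaturated, and $P$ is an $M_2$-augmenting path. Then $M_2\triangle E(P)$ is a matching larger than $M_2$, contradicting maximality. Hence every path component has even length.

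Finally I identify the endpoints. On an even path the two end edges lie in different matchings, say the end edge at $x$ is in $M_1$ and the end edge at $y$ is in $M_2$. The argument of the previous paragraph shows that $x$ is not covered by $M_2$, since its only $H$-edge is from $M_1$ and an edge of $M_1\cap M_2$ at $x$ is impossible. Thus $x$ is $M_1$-saturated and $M_2$-unsaturated, and symmetrically $y$ is $M_2$-saturated and $M_1$-unsaturated. This gives case 2.

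The main (mild) obstacle is being careful that degree-$1$ vertices of $H$ are genuinely unsaturated by the other matching, which requires ruling out common edges in $M_1\cap M_2$ as above. Nothing else is delicate, and none of the earlier results of the paper are needed.
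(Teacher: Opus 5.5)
Your proof is correct and complete. The degree bound, the alternation argument, the use of maximality to rule out odd (hence augmenting) path components, and the check that endpoints cannot meet an edge of $M_1\cap M_2$ together form the standard symmetric-difference argument. The paper gives no proof of this lemma, calling it well known, so your write-up supplies exactly the routine argument it relies on.
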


\section{Main Results}\label{sss2}

In this section we establish the main results of the paper. Before starting, we need to introduce some necessary tools and notation.

A spanning subgraph of a graph $G$ is called a \emph{Sachs subgraph} if
each of its components is a regular graph of degree one or two. Such
subgraphs arise naturally in the study of determinants and permanents
of adjacency matrices \cite{h2,s3}. They are also known as $\{1,2\}$-factors
and are closely related to perfect $2$-matchings. 
Note that every perfect matching is a Sachs subgraph, since all its components are copies of $K_2$. The set of all Sachs subgraphs of $G$ is denoted by $\Sa{G}$.

Let $\prk G$ be the maximum order of a subgraph $H$ of $G$ such that
$\Sa H\neq\emptyset$. We define 
\[
\SSa G:=\U_{H}\Sa H
\]
where the union is taken over all subgraphs of $G$ of order $\prk G$.
Note that when $\Sa G\neq\emptyset$ we have that 

\[
\SSa G=\Sa G.
\]
The following theorem shows that there exists a very natural relationship between the Sachs subgraphs of a graph and the computation of its determinant or permanent.

\begin{theorem}[\cite{h2,merris1981permanental}\label{harary}] 
	Let $G$ be a graph. Then
	\begin{eqnarray*}
		\det(G) & = & \sm_{H\in\Sa G}(-1)^{k(H)}2^{m(H)},\\
		\perm G & = & \sm_{H\in\Sa G}2^{m(H)}
	\end{eqnarray*}
	
	\noindent where $k(H)$ is the number of even components of $H$, and $m(H)$ is the number of cycles of $H$.
\end{theorem}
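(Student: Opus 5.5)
We would prove both identities directly from the Leibniz expansions
\[
\det(A)=\sum_{\sigma\in S_n}\operatorname{sgn}(\sigma)\prod_{i=1}^{n}a_{i\sigma(i)},\qquad
\operatorname{perm}(A)=\sum_{\sigma\in S_n}\prod_{i=1}^{n}a_{i\sigma(i)},
\]
where $A=(a_{ij})$ is the adjacency matrix of $G$ on $V(G)=\{1,\dots,n\}$. Since $A$ is a $0/1$ matrix, the product $\prod_i a_{i\sigma(i)}$ is $1$ if $i\sigma(i)\in E(G)$ for every $i$, and $0$ otherwise. So only such permutations, which we call \emph{admissible}, contribute. Both sums then count admissible permutations, with a sign in the determinant case. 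The whole proof amounts to grouping the admissible permutations according to the Sachs subgraph they determine.

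First we describe the admissible permutations. Decompose $\sigma$ into disjoint cycles. A fixed point is impossible, because $G$ has no loops and so $a_{ii}=0$. A $2$-cycle $(i\ j)$ requires $ij\in E(G)$, and it corresponds to a $K_2$ component. A cycle $(i_1\,i_2\,\cdots\,i_\ell)$ with $\ell\ge 3$ requires $i_1i_2,\dots,i_{\ell-1}i_\ell,i_\ell i_1\in E(G)$, so it traverses a cycle of $G$ of length $\ell$ in a chosen direction. Hence the underlying undirected graph of $\sigma$, obtained by taking each edge $i\sigma(i)$ once, is a spanning subgraph $H(\sigma)$ whose components are copies of $K_2$ and cycles. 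That is, $H(\sigma)\in\Sa G$.

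Conversely, fix $H\in\Sa G$. Each $K_2$ component lifts to exactly one transposition. Each cycle component lifts to exactly two cyclic permutations, one per orientation; these are distinct because the cycle has length at least $3$. Therefore the map $\sigma\mapsto H(\sigma)$ from admissible permutations onto $\Sa G$ has fibres of size exactly $2^{m(H)}$. This already proves the permanent formula $\operatorname{perm}(G)=\sum_{H\in\Sa G}2^{m(H)}$.

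For the determinant it remains to check that $\operatorname{sgn}(\sigma)$ depends only on $H=H(\sigma)$ and equals $(-1)^{k(H)}$. A cycle of length $\ell$ in $\sigma$ has sign $(-1)^{\ell-1}$, and $\operatorname{sgn}$ is multiplicative over the disjoint cycles. The cycles of $\sigma$ are in bijection with the components of $H$, with the same number of vertices. So each component on an even number of vertices contributes $-1$, and each component on an odd number of vertices contributes $+1$. The even components are the $K_2$'s together with the even cycles, so $\operatorname{sgn}(\sigma)=(-1)^{k(H)}$. Summing over the fibres gives $\det(G)=\sum_{H\in\Sa G}(-1)^{k(H)}2^{m(H)}$.

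The main point requiring care is the fibre count. One must check that reversing orientation gives a genuinely different permutation exactly when the component is a cycle of length at least $3$, and not when it is a $K_2$. The sign bookkeeping also requires care. Both are routine once the correspondence is set up. We adopt the convention $\det=\operatorname{perm}=1$ for the empty graph, matching the empty Sachs subgraph.
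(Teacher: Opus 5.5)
Your proof is correct. The paper gives no proof of its own and simply cites Harary and Merris et al.; your argument is the classical proof behind that citation. You expand the determinant and permanent over permutations, discard those using a non-edge, and group the remaining permutations by their underlying Sachs subgraph. Each fibre then has size $2^{m(H)}$ and sign $(-1)^{k(H)}$. You also correctly read ``even component'' as a component with an even number of vertices, which the paper leaves implicit. This is the reading under which each $K_2$ contributes a factor $-1$, as needed for $\det(K_2)=-1$.
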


Denote by $\det(G)$ the determinant $\det(A(G))$, and define $\det(G[\emptyset])=1$.

For the reader's convenience, we include a short proof of \cref{asokpjdoas} for completeness.

\begin{lemma}[\cite{KEVINunimodular}\label{asokpjdoas}\label{asodkjasasodkjas}]
	Let $G$ be a K\H{o}nig-Egerváry graph and $H\in\SSa G,$
	then $H$ has no odd cycles. 
\end{lemma}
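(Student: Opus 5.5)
The plan is a contradiction argument: from an odd cycle in $H$ I will build a matching larger than $\mu(G)$, which is impossible in a K\H{o}nig--Egerv\'ary graph. Let $H\in\SSa G$, so $H$ is a Sachs subgraph of some subgraph $G'$ of $G$ with $|G'|=\prk G$. Suppose $H$ has an odd cycle component $C$ of length $2k+1$.

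First I relate $\prk G$ to the matching number. Any matching of size $\mu(G)$ is itself a Sachs subgraph of the $2\mu(G)$ vertices it covers, so $\prk G\geq 2\mu(G)$. Next I count vertices in $H$. Each $K_2$ component contributes an edge to a matching, and each even cycle contributes half its edges, covering all its vertices. Each odd cycle of length $2r+1$ contributes $r$ matching edges, covering all its vertices but one. Hence $H$ yields a matching $M_H$ of $G$ that leaves exactly one vertex of each odd cycle of $H$ uncovered inside $V(H)$, and the number of odd cycles is at least one.

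The key step uses the K\H{o}nig--Egerv\'ary hypothesis via independent sets. Let $S$ be a maximum independent set, so $|S|=\alpha(G)=|V(G)|-\mu(G)$. Every component of $H$ is a $K_2$ or a cycle. A $K_2$ or an even cycle of length $2r$ contains at most $r$ vertices of $S$, which is half its order. An odd cycle of length $2r+1$ contains at most $r$ vertices of $S$, which is strictly less than half its order (one half less). Hence
\[
|S\cap V(H)|\leq \frac{|V(H)|-q}{2},
\]
where $q\geq1$ is the number of odd cycles of $H$. Vertices outside $V(H)$ contribute at most $|V(G)|-|V(H)|$. Therefore
\[
\alpha(G)\leq |V(G)|-\frac{|V(H)|+q}{2}=|V(G)|-\frac{\prk G+q}{2}\leq |V(G)|-\mu(G)-\frac{q}{2},
\]
using $\prk G\geq 2\mu(G)$. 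This contradicts $\alpha(G)=|V(G)|-\mu(G)$ since $q\geq1$, so $H$ has no odd cycles.

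The main obstacle is recognizing that the inequality $\prk G\ge 2\mu(G)$ is the right bridge between Sachs subgraphs and the K\H{o}nig--Egerv\'ary identity. Once that is in place, the counting of independent vertices per component is routine, and no structural results such as flowers or \cref{lematrivial} are needed.
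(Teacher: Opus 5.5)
Your proof is correct and is essentially the paper's argument. The paper works with the complementary vertex cover $X=V(G)-I$, which has size $\mu(G)$, and notes that $X\cap V(H)$ covers $H$ and so must exceed $|H|/2$ when $H$ has an odd cycle; your count of the independent set $S$ is the same inequality read from the other side. One cleanup: your opening plan and the paragraph constructing $M_H$ are never used, since a matching of $G$ cannot exceed $\mu(G)$ anyway, so they can be dropped.
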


\begin{proof}
	Let $I$ be a maximum independent set of $G$, and set $X:=V(G)-I$. Since $G$
	is a K\H{o}nig-Egerv\'ary graph, we have
	\[
	|X|=|G|-\alpha(G)=\mu(G).
	\]
	Moreover, $X$ is a vertex cover of $G$, and therefore $X\cap V(H)$ is a
	vertex cover of $H$.
	
	Since every maximum matching of $G$ is a Sachs subgraph of order $2\mu(G)$,
	it follows that
	\[
	\prk{G}\geq 2\mu(G).
	\]
	As $H\in \SSa G$, we have $|H|=\prk{G}$, and hence
	\[
	|H|\geq 2\mu(G).
	\]
	
	Assume that $H$ has an odd cycle component. Since each component of $H$ is
	either a copy of $K_2$ or a cycle, every vertex cover of $H$ has size at least
	$|H|/2$, and in fact strictly greater than $|H|/2$ whenever $H$ has an odd
	cycle component. Therefore,
	\[
	|X\cap V(H)|>\frac{|H|}{2}.
	\]
	On the other hand,
	\[
	|X\cap V(H)|\leq |X|=\mu(G)\leq \frac{|H|}{2},
	\]
	a contradiction. Therefore $H$ has no odd cycles.
\end{proof}

\begin{corollary}\label{asodkjas}\label{asoudh1iuh23}\label{asdoasodkok2323asdoasodkok2323}
	Let $G$ be a K\H{o}nig-Egerváry graph, then
	$\prk G=2\mu(G)$.
\end{corollary}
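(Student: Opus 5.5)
The claim is $\prk G=2\mu(G)$. I would prove it by showing the two inequalities separately, taking both essentially from the argument in the proof of \cref{asokpjdoas}.

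For the lower bound, fix a maximum matching $M$ of $G$. The subgraph $G[M]$ has order $2\mu(G)$. Each of its components is a copy of $K_2$, so $G[M]$ is itself a Sachs subgraph of $G[M]$. Hence $\Sa{G[M]}\neq\emptyset$, and by definition of $\prk G$ we get $\prk G\geq 2\mu(G)$.

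For the upper bound, choose a subgraph of order $\prk G$ that has a Sachs subgraph $H$. Then $H\in\SSa G$ and $|H|=\prk G$. By \cref{asokpjdoas}, $H$ has no odd cycle components, so every component of $H$ is either a copy of $K_2$ or an even cycle. From such an $H$ we build a matching of $G$ covering all of $V(H)$:
\begin{itemize}
\item keep every $K_2$ component as a matching edge;
\item from each even cycle $C$, take alternate edges, which give a perfect matching of $C$.
\end{itemize}
The union is a matching of $G$ with $|H|/2$ edges. Therefore $|H|/2\leq\mu(G)$, that is, $\prk G\leq 2\mu(G)$.

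Combining the two inequalities gives $\prk G=2\mu(G)$.

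The only step that needs the König–Egerváry hypothesis is excluding odd cycles in $H$, and \cref{asokpjdoas} already handles it. Without that lemma, an odd cycle component would contribute an odd number of vertices that no matching can fully cover, and the upper bound could fail. So I expect no real obstacle beyond correctly quoting the lemma.
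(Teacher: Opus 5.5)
Your proof is correct and matches what the paper intends. The lower bound $\prk G\geq 2\mu(G)$ comes from a maximum matching, as noted in the proof of \cref{asokpjdoas}. The upper bound holds because, by \cref{asokpjdoas}, every $H\in\SSa G$ is a disjoint union of copies of $K_2$ and even cycles, and so contains a matching of $G$ with $|H|/2$ edges.
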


In general, the decomposition $PFF(G),PF(G)$ does not respect the
Sachs structure of every K\H{o}nig-Egerváry graph in the following
sense: for the graph $G$ in \cref{asdioj1o2i3j} there exists $H\in\SSa G$
such that
\[
E(PFF(G),PF(G))\ii E(H)\neq\emptyset.
\]

\begin{figure}[H]
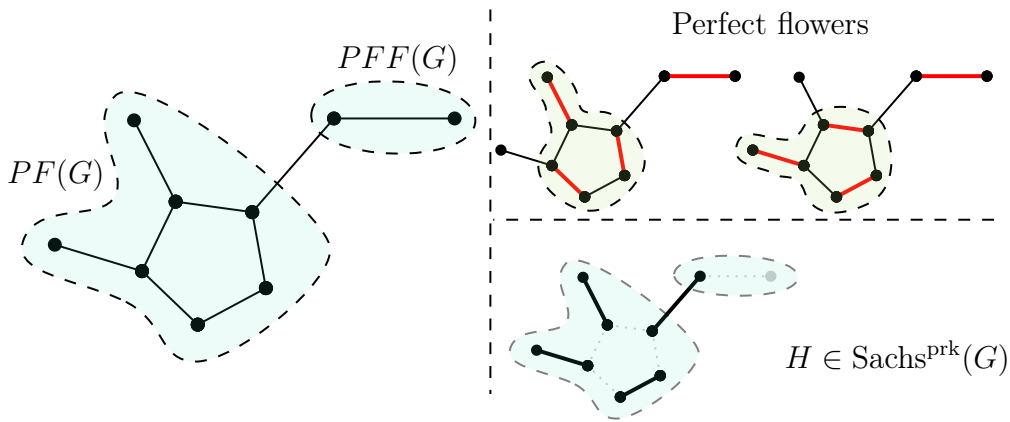

	
	\begin{center}

		\tikzset{every picture/.style={line width=0.75pt}} 
		


	\end{center}

	\caption{The partition into $PF(G)$ and $PFF(G)$, and a Sachs subgraph crossing the partition.}
	\label{asdioj1o2i3j}
	
\end{figure}

\noindent Moreover, note that $G$ satisfies:
\[
\mu(G)+\alpha(G)=4+5=9=\a G,
\]
\noindent that is, $G$ is a K\H{o}nig-Egerváry graph. However, for
K\H{o}nig-Egerváry graphs, the graph has a perfect matching if and
only if $\Sa G=\SSa G$. This reduces the study of the determinant/permanent
of the graph via \cref{harary} to the study of K\H{o}nig-Egerváry
graphs with a perfect matching. Therefore, from this point of view,
we are only interested in K\H{o}nig-Egerváry graphs with a perfect
matching. We will show in \cref{asoidj1oi23} that the decomposition
$PFF(G),PF(G)$ respects the Sachs structure in any K\H{o}nig-Egerváry
graph with a perfect matching.

\begin{theorem}\label{asoidj1oi23}\label{largo}
	Let $G$ be a K\H{o}nig-Egerváry graph with a perfect matching. Then, for
	every $H\in\Sa G$, it holds that
	\[
	E(PFF(G),PF(G))\ii E(H)=\emptyset.
	\]
\end{theorem}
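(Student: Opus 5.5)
The plan is to argue by contradiction. Since $G$ has a perfect matching, $\prk{G}=|G|$ and every maximum matching of $G$ is perfect. By \cref{asokpjdoas}, every $H\in\Sa G$ consists only of copies of $K_2$ and even cycles.

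Assume some edge $uv\in E(H)$ has $u\in PF(G)$ and $v\in PFF(G)$. Splitting each even cycle of $H$ into one of its two perfect matchings, choosing the one that contains $uv$ when $uv$ lies on a cycle, gives a perfect matching $M'$ of $G$ with $uv\in M'$. So it suffices to prove the following transfer statement: \emph{if $u\in PF(G)$ and $N$ is any perfect matching, then $N(u)\in PF(G)$.} Applying it with $N=M'$ gives $v\in PF(G)$, a contradiction.

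\textbf{Step 1 (closure of a flower under its own matching).} Let $M$ be a perfect matching and $(C,P)$ an $M$-perfect flower containing $u$. The base $p_1$ is matched to $p_2$ along $P$, since the first edge of $P$ is in $M$. The last edge of $P$ is in $M$, and $C-p_1$ is perfectly matched by the $k$ edges of $M$ on $C$. Hence $V(C)\cup V(P)$ is closed under $M$, so $M(u)\in PF(G)$. This already settles the case $N(u)=M(u)$.

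\textbf{Step 2 (transfer along $M\triangle N$).} If $N(u)\neq M(u)$, then by \cref{lematrivial}, and since both matchings are perfect, $u$ lies on an even $M$/$N$-alternating cycle $D$. Walk along $D$ starting with the $N$-edge $u\,N(u)$, and stop at the first vertex $w$ after $u$ that lies again in $F:=C\cup P$. If no such $w$ exists before closing up at $u$, then $D-u$ gives an $M$-alternating path leaving $F$ at $u$. We then plan to rebuild a perfect flower, with respect to $M$ or to $M\triangle D$, that contains $N(u)$:
\begin{itemize}
\item Every vertex $x$ of $F$ can be reached from the base $p_1$ by an $M$-alternating path inside $F$ that arrives at $x$ through a suitable edge. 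On the blossom we may go either way around $C$, which gives both parities.
\item Concatenating such a path with the detour segment of $D$ yields either a longer stem, hence a new perfect flower through $N(u)$, or a new blossom.
\end{itemize}

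\textbf{Step 3 (case analysis, the main obstacle).} The hard part is controlling how the segment $u\ldots w$ of $D$ re-enters $F$: whether $w$ is on $C$ or on $P$, and the parity with which it arrives. In the \lq\lq good\rq\rq{} parities we directly obtain an alternating path starting and ending with $M$-edges (or $(M\triangle D)$-edges) attached to an odd cycle, that is, a perfect flower containing $N(u)$. In the \lq\lq bad\rq\rq{} parity, the construction produces two $M$-blossoms joined by an $M$-alternating path whose first and last edges lie in $M$, that is, an $M$-posy, or an $M$-flower. By \cref{safe} this is impossible in a König--Egerváry graph, so those cases are excluded.

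I expect the bookkeeping of these parity cases to be the bulk of the proof. This applies especially when $w\in V(C)$, where one must choose the correct arc of $C$ so that the new odd cycle contains exactly the right number of matching edges. I would first try switching to $M\triangle D$, which is still perfect and contains $uv$, so that the new flower uses $M\triangle D$-edges near $u$.
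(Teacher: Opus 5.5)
Your Steps 1 and 2 match the paper's setup: a perfect matching $M'\ni uv$ built from the even cycles of $H$, a flower matching $M$ for $u$, and the excursion of the $M\triangle N$ cycle from $u$ until it re-enters $F$. The gap is Step 3. It is the entire content of the theorem, and you only announce it, and the announcement misjudges it in three ways.

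First, the arrival parity is not free. By Step 1 no $M$-edge leaves $F$. So when $N(u)\notin F$, the excursion $Q=q_1q_2\dots q_s$ (with $q_1=u$, $q_2=N(u)$, $q_s=w$) has both end edges in $N\setminus M$, $s$ is even with $s\geq 4$, and $q_3=M(q_2)\notin F$. Also $w\neq u$, because $uM(u)\in M\setminus N$ lies on $D$ and $M(u)\in F$; your ``no such $w$'' branch is therefore vacuous.

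Second, there is nothing to exclude via \cref{safe}. For a perfect matching no $M$-flower exists at all, and you exhibit no posy in any configuration. In fact every configuration yields a perfect flower through $N(u)$ directly, so the K\H{o}nig--Egerv\'ary hypothesis is needed only to obtain $M'$ through \cref{asokpjdoas}.

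Third, switching along the whole component $D$ is not the right move. $D$ may re-enter $F$ many times, and $M\triangle D$ then destroys the flower.

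What is missing is the following exhaustive case list. Write $P=p_1\dots p_t$ with base $p_1$, so that $p_\ell p_{\ell+1}\in M$ if and only if $\ell$ is odd.
\begin{itemize}
\item If $u=p_i$ with $i$ even, then $(C,\,p_1\dots p_i\,q_2q_3)$ is a perfect flower.
\item If $w=p_j$ with $j$ even, take $(C,\,p_1\dots p_j\,q_{s-1}\dots q_2)$.
\item If $u=p_i$ and $w=p_j$ with $i,j$ odd, then $Q\cup P[p_i,p_j]$ is an $M$-blossom based at $p_b$, where $b=\max\{i,j\}$, with attached path $p_bp_{b+1}$.
\item If one of $u,w$ equals $p_k$ with $k$ odd and the other lies on $C-p_1$, close $Q$ with $P[p_1,p_k]$ and with the arc of $C$ from $p_1$ that enters the $C$-vertex through its $M$-edge. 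This gives a blossom based at $p_k$ with attached path $p_kp_{k+1}$.
\item If $u,w\in V(C)-p_1$, let $X$ be the arc of $C$ between them that avoids $p_1$.
\begin{itemize}
\item If exactly one end edge of $X$ is in $M$, then $Q\cup X$ is a blossom based at the endpoint whose $X$-edge is not in $M$; the attached path is that endpoint's $M$-edge on $C$.
\item If neither end edge is in $M$, replacing $X$ by $Q$ in $C$ gives a blossom based at $p_1$ with attached path $p_1p_2$.
\item If both end edges are in $M$, then $Q\cup X$ is an even $M$-alternating cycle. After switching $M$ along this small cycle (not along $D$), the same replaced cycle is a blossom based at $p_1$ with attached path $p_1p_2$.
\end{itemize}
\end{itemize}
Each check is a short parity count. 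Without this list, the proposal is an outline rather than a proof.
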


\begin{proof}
	Assume for contradiction that there exists $H\in\Sa G$ such that
	\[
	E(PFF(G),PF(G))\ii E(H)\neq\emptyset.
	\]
	\noindent Let $e\in E(PFF(G),PF(G))\ii E(H)$.

	Since $H$ is a Sachs subgraph, each connected component of $H$ is either a copy of $K_2$ or a cycle. By \cref{asokpjdoas}, $H$ has no odd cycles, and therefore every cycle component of $H$ is even. Hence each cycle component has a perfect matching, and if $e$ lies on such a cycle, we may choose a perfect matching of that cycle containing $e$. If $e$ belongs to a $K_2$-component, then it is forced to belong to the unique perfect matching of that component. Choosing arbitrarily a perfect matching on every other component of $H$, we obtain a perfect matching $M_e$ of $H$ such that $e\in M_e$. Since every Sachs subgraph is spanning by definition, it follows that $M_e$ is also a perfect matching of $G$.
	By the
 choice of $e$, there exists $v\in PF(G)\ii e$. Note that $M_{e}(v)\in PFF(G)$
	and $e=vM_{e}(v)\in M_{e}$. On the other hand, there exists a perfect
	matching $M_{v}$ of $G$ such that $v$ lies in an $M_{v}$-perfect flower
	$(P,C)$ of $G$, see \cref{21312asdasd}.

	Write
	\[
	C=c_1,c_2,\ldots,c_{2r+1},c_1,
	\qquad
	P=p_1,p_2,\ldots,p_t,
	\]
	where \(p_1=c_1\). The common vertex \(p_1=c_1\) is the base of the
	\(M_v\)-blossom \(C\); otherwise \(p_1\) would be incident with an edge of
	\(M_v\) in \(C\) and also with the first edge of \(P\), which belongs to
	\(M_v\). We orient \(P\) from \(p_1\) to its other endpoint. Thus
	\[
	p_\ell p_{\ell+1}\in M_v
	\quad\Longleftrightarrow\quad
	\ell \text{ is odd},
	\qquad 1\leq \ell<t.
	\]
	In particular, since the first and last edges of \(P\) belong to \(M_v\),
	the number \(t\) is even.
	
	We also choose the cyclic orientation of \(C\) so that
	\[
	c_\ell c_{\ell+1}\in M_v
	\quad\Longleftrightarrow\quad
	\ell \text{ is even},
	\qquad 1\leq \ell\leq 2r,
	\]
	and
	\[
	c_{2r+1}c_1\notin M_v.
	\]
	Therefore \(C\) is an \(M_v\)-blossom based at \(p_1=c_1\), and
	\[
	|E(C)\cap M_v|=r=\frac{|V(C)|-1}{2}.
	\]
	Define
	\[
	x:=M_e(v).
	\]

\begin{figure}[H]
	
	\begin{center}

		\tikzset{every picture/.style={line width=0.75pt}} 
		
		\begin{tikzpicture}[x=0.75pt,y=0.75pt,yscale=-1,xscale=1]
			
			\draw    (468.63,129.34) -- (469,170.32) ;
			\draw    (468.63,129.34) -- (429.54,117.03) ;
			\draw    (405.75,150.4) -- (429.54,117.03) ;
			\draw    (405.75,150.4) -- (430.14,183.34) ;
			\draw    (469,170.32) -- (430.14,183.34) ;
			\draw    (469,170.32) ;
			\draw    (430.14,183.34) ;
			\draw    (429.54,117.03) ;
			\draw    (405.75,150.4) ;
			\draw    (405.75,150.4) ;
			\draw    (430.14,183.34) ;
			\draw    (469,170.32) ;
			\draw    (429.54,117.03) ;
			\draw    (405.75,150.4) ;
			\draw    (430.14,183.34) ;
			\draw    (429.54,117.03) ;
			\draw    (468.63,129.34) ;
			\draw    (469,170.32) ;
			\draw    (405.75,150.4) ;
			\draw [line width=0.75]    (430.14,183.34) ;
			\draw [shift={(430.14,183.34)}, rotate = 0] [color={rgb, 255:red, 0; green, 0; blue, 0 }  ][fill={rgb, 255:red, 0; green, 0; blue, 0 }  ][line width=0.75]      (0, 0) circle [x radius= 3.02, y radius= 3.02]   ;
			\draw [line width=0.75]    (405.75,150.4) ;
			\draw [shift={(405.75,150.4)}, rotate = 0] [color={rgb, 255:red, 0; green, 0; blue, 0 }  ][fill={rgb, 255:red, 0; green, 0; blue, 0 }  ][line width=0.75]      (0, 0) circle [x radius= 3.02, y radius= 3.02]   ;
			\draw [line width=0.75]    (429.54,117.03) ;
			\draw [shift={(429.54,117.03)}, rotate = 0] [color={rgb, 255:red, 0; green, 0; blue, 0 }  ][fill={rgb, 255:red, 0; green, 0; blue, 0 }  ][line width=0.75]      (0, 0) circle [x radius= 3.02, y radius= 3.02]   ;
			\draw [line width=0.75]    (468.63,129.34) ;
			\draw [shift={(468.63,129.34)}, rotate = 0] [color={rgb, 255:red, 0; green, 0; blue, 0 }  ][fill={rgb, 255:red, 0; green, 0; blue, 0 }  ][line width=0.75]      (0, 0) circle [x radius= 3.02, y radius= 3.02]   ;
			\draw [line width=0.75]    (469,170.32) ;
			\draw [shift={(469,170.32)}, rotate = 0] [color={rgb, 255:red, 0; green, 0; blue, 0 }  ][fill={rgb, 255:red, 0; green, 0; blue, 0 }  ][line width=0.75]      (0, 0) circle [x radius= 3.02, y radius= 3.02]   ;
			\draw    (357.29,150.4) -- (405.75,150.4) ;
			\draw [shift={(405.75,150.4)}, rotate = 0] [color={rgb, 255:red, 0; green, 0; blue, 0 }  ][fill={rgb, 255:red, 0; green, 0; blue, 0 }  ][line width=0.75]      (0, 0) circle [x radius= 3.02, y radius= 3.02]   ;
			\draw [shift={(357.29,150.4)}, rotate = 0] [color={rgb, 255:red, 0; green, 0; blue, 0 }  ][fill={rgb, 255:red, 0; green, 0; blue, 0 }  ][line width=0.75]      (0, 0) circle [x radius= 3.02, y radius= 3.02]   ;
			\draw    (308.83,150.4) -- (357.29,150.4) ;
			\draw [shift={(357.29,150.4)}, rotate = 0] [color={rgb, 255:red, 0; green, 0; blue, 0 }  ][fill={rgb, 255:red, 0; green, 0; blue, 0 }  ][line width=0.75]      (0, 0) circle [x radius= 3.02, y radius= 3.02]   ;
			\draw [shift={(308.83,150.4)}, rotate = 0] [color={rgb, 255:red, 0; green, 0; blue, 0 }  ][fill={rgb, 255:red, 0; green, 0; blue, 0 }  ][line width=0.75]      (0, 0) circle [x radius= 3.02, y radius= 3.02]   ;
			\draw    (260.38,150.4) -- (308.83,150.4) ;
			\draw [shift={(308.83,150.4)}, rotate = 0] [color={rgb, 255:red, 0; green, 0; blue, 0 }  ][fill={rgb, 255:red, 0; green, 0; blue, 0 }  ][line width=0.75]      (0, 0) circle [x radius= 3.02, y radius= 3.02]   ;
			\draw [shift={(260.38,150.4)}, rotate = 0] [color={rgb, 255:red, 0; green, 0; blue, 0 }  ][fill={rgb, 255:red, 0; green, 0; blue, 0 }  ][line width=0.75]      (0, 0) circle [x radius= 3.02, y radius= 3.02]   ;
			\draw    (211.92,150.4) -- (260.38,150.4) ;
			\draw [shift={(260.38,150.4)}, rotate = 0] [color={rgb, 255:red, 0; green, 0; blue, 0 }  ][fill={rgb, 255:red, 0; green, 0; blue, 0 }  ][line width=0.75]      (0, 0) circle [x radius= 3.02, y radius= 3.02]   ;
			\draw [shift={(211.92,150.4)}, rotate = 0] [color={rgb, 255:red, 0; green, 0; blue, 0 }  ][fill={rgb, 255:red, 0; green, 0; blue, 0 }  ][line width=0.75]      (0, 0) circle [x radius= 3.02, y radius= 3.02]   ;
			\draw    (163.46,150.4) -- (183.81,150.4) -- (211.92,150.4) ;
			\draw [shift={(211.92,150.4)}, rotate = 0] [color={rgb, 255:red, 0; green, 0; blue, 0 }  ][fill={rgb, 255:red, 0; green, 0; blue, 0 }  ][line width=0.75]      (0, 0) circle [x radius= 3.02, y radius= 3.02]   ;
			\draw [shift={(163.46,150.4)}, rotate = 0] [color={rgb, 255:red, 0; green, 0; blue, 0 }  ][fill={rgb, 255:red, 0; green, 0; blue, 0 }  ][line width=0.75]      (0, 0) circle [x radius= 3.02, y radius= 3.02]   ;
			\draw    (260.38,150.4) -- (260.38,75) ;
			\draw [shift={(260.38,75)}, rotate = 270] [color={rgb, 255:red, 0; green, 0; blue, 0 }  ][fill={rgb, 255:red, 0; green, 0; blue, 0 }  ][line width=0.75]      (0, 0) circle [x radius= 3.02, y radius= 3.02]   ;
			\draw [shift={(260.38,150.4)}, rotate = 270] [color={rgb, 255:red, 0; green, 0; blue, 0 }  ][fill={rgb, 255:red, 0; green, 0; blue, 0 }  ][line width=0.75]      (0, 0) circle [x radius= 3.02, y radius= 3.02]   ;
			\draw [color={rgb, 255:red, 255; green, 0; blue, 0 }  ,draw opacity=1 ][line width=1.5]    (163.46,150.4) -- (211.92,150.4) ;
			\draw [color={rgb, 255:red, 255; green, 0; blue, 0 }  ,draw opacity=1 ][line width=1.5]    (260.38,150.4) -- (308.83,150.4) ;
			\draw [color={rgb, 255:red, 255; green, 0; blue, 0 }  ,draw opacity=1 ][line width=1.5]    (357.29,150.4) -- (405.75,150.4) ;
			\draw [color={rgb, 255:red, 255; green, 0; blue, 0 }  ,draw opacity=1 ][line width=1.5]    (429.54,117.03) -- (468.63,129.34) ;
			\draw [color={rgb, 255:red, 255; green, 0; blue, 0 }  ,draw opacity=1 ][line width=1.5]    (430.14,183.34) -- (469,170.32) ;
			\draw [color={rgb, 255:red, 0; green, 0; blue, 255 }  ,draw opacity=1 ][line width=1.5]    (260.38,150.4) -- (260.38,75) ;
			\draw    (163.46,150.4) ;
			\draw [shift={(163.46,150.4)}, rotate = 0] [color={rgb, 255:red, 0; green, 0; blue, 0 }  ][fill={rgb, 255:red, 0; green, 0; blue, 0 }  ][line width=0.75]      (0, 0) circle [x radius= 3.02, y radius= 3.02]   ;
			\draw [shift={(163.46,150.4)}, rotate = 0] [color={rgb, 255:red, 0; green, 0; blue, 0 }  ][fill={rgb, 255:red, 0; green, 0; blue, 0 }  ][line width=0.75]      (0, 0) circle [x radius= 3.02, y radius= 3.02]   ;
			\draw    (211.92,150.4) ;
			\draw [shift={(211.92,150.4)}, rotate = 0] [color={rgb, 255:red, 0; green, 0; blue, 0 }  ][fill={rgb, 255:red, 0; green, 0; blue, 0 }  ][line width=0.75]      (0, 0) circle [x radius= 3.02, y radius= 3.02]   ;
			\draw [shift={(211.92,150.4)}, rotate = 0] [color={rgb, 255:red, 0; green, 0; blue, 0 }  ][fill={rgb, 255:red, 0; green, 0; blue, 0 }  ][line width=0.75]      (0, 0) circle [x radius= 3.02, y radius= 3.02]   ;
			\draw    (430.14,183.34) ;
			\draw [shift={(430.14,183.34)}, rotate = 0] [color={rgb, 255:red, 0; green, 0; blue, 0 }  ][fill={rgb, 255:red, 0; green, 0; blue, 0 }  ][line width=0.75]      (0, 0) circle [x radius= 3.02, y radius= 3.02]   ;
			\draw [shift={(430.14,183.34)}, rotate = 0] [color={rgb, 255:red, 0; green, 0; blue, 0 }  ][fill={rgb, 255:red, 0; green, 0; blue, 0 }  ][line width=0.75]      (0, 0) circle [x radius= 3.02, y radius= 3.02]   ;
			\draw    (469,170.32) ;
			\draw [shift={(469,170.32)}, rotate = 0] [color={rgb, 255:red, 0; green, 0; blue, 0 }  ][fill={rgb, 255:red, 0; green, 0; blue, 0 }  ][line width=0.75]      (0, 0) circle [x radius= 3.02, y radius= 3.02]   ;
			\draw [shift={(469,170.32)}, rotate = 0] [color={rgb, 255:red, 0; green, 0; blue, 0 }  ][fill={rgb, 255:red, 0; green, 0; blue, 0 }  ][line width=0.75]      (0, 0) circle [x radius= 3.02, y radius= 3.02]   ;
			\draw    (468.63,129.34) ;
			\draw [shift={(468.63,129.34)}, rotate = 0] [color={rgb, 255:red, 0; green, 0; blue, 0 }  ][fill={rgb, 255:red, 0; green, 0; blue, 0 }  ][line width=0.75]      (0, 0) circle [x radius= 3.02, y radius= 3.02]   ;
			\draw [shift={(468.63,129.34)}, rotate = 0] [color={rgb, 255:red, 0; green, 0; blue, 0 }  ][fill={rgb, 255:red, 0; green, 0; blue, 0 }  ][line width=0.75]      (0, 0) circle [x radius= 3.02, y radius= 3.02]   ;
			\draw    (429.54,117.03) ;
			\draw [shift={(429.54,117.03)}, rotate = 0] [color={rgb, 255:red, 0; green, 0; blue, 0 }  ][fill={rgb, 255:red, 0; green, 0; blue, 0 }  ][line width=0.75]      (0, 0) circle [x radius= 3.02, y radius= 3.02]   ;
			\draw [shift={(429.54,117.03)}, rotate = 0] [color={rgb, 255:red, 0; green, 0; blue, 0 }  ][fill={rgb, 255:red, 0; green, 0; blue, 0 }  ][line width=0.75]      (0, 0) circle [x radius= 3.02, y radius= 3.02]   ;
			\draw    (405.75,150.4) ;
			\draw [shift={(405.75,150.4)}, rotate = 0] [color={rgb, 255:red, 0; green, 0; blue, 0 }  ][fill={rgb, 255:red, 0; green, 0; blue, 0 }  ][line width=0.75]      (0, 0) circle [x radius= 3.02, y radius= 3.02]   ;
			\draw [shift={(405.75,150.4)}, rotate = 0] [color={rgb, 255:red, 0; green, 0; blue, 0 }  ][fill={rgb, 255:red, 0; green, 0; blue, 0 }  ][line width=0.75]      (0, 0) circle [x radius= 3.02, y radius= 3.02]   ;
			\draw    (357.29,150.4) ;
			\draw [shift={(357.29,150.4)}, rotate = 0] [color={rgb, 255:red, 0; green, 0; blue, 0 }  ][fill={rgb, 255:red, 0; green, 0; blue, 0 }  ][line width=0.75]      (0, 0) circle [x radius= 3.02, y radius= 3.02]   ;
			\draw [shift={(357.29,150.4)}, rotate = 0] [color={rgb, 255:red, 0; green, 0; blue, 0 }  ][fill={rgb, 255:red, 0; green, 0; blue, 0 }  ][line width=0.75]      (0, 0) circle [x radius= 3.02, y radius= 3.02]   ;
			\draw    (308.83,150.4) ;
			\draw [shift={(308.83,150.4)}, rotate = 0] [color={rgb, 255:red, 0; green, 0; blue, 0 }  ][fill={rgb, 255:red, 0; green, 0; blue, 0 }  ][line width=0.75]      (0, 0) circle [x radius= 3.02, y radius= 3.02]   ;
			\draw [shift={(308.83,150.4)}, rotate = 0] [color={rgb, 255:red, 0; green, 0; blue, 0 }  ][fill={rgb, 255:red, 0; green, 0; blue, 0 }  ][line width=0.75]      (0, 0) circle [x radius= 3.02, y radius= 3.02]   ;
			\draw    (260.38,75) ;
			\draw [shift={(260.38,75)}, rotate = 0] [color={rgb, 255:red, 0; green, 0; blue, 0 }  ][fill={rgb, 255:red, 0; green, 0; blue, 0 }  ][line width=0.75]      (0, 0) circle [x radius= 3.02, y radius= 3.02]   ;
			\draw [shift={(260.38,75)}, rotate = 0] [color={rgb, 255:red, 0; green, 0; blue, 0 }  ][fill={rgb, 255:red, 0; green, 0; blue, 0 }  ][line width=0.75]      (0, 0) circle [x radius= 3.02, y radius= 3.02]   ;
			\draw    (260.38,150.4) ;
			\draw [shift={(260.38,150.4)}, rotate = 0] [color={rgb, 255:red, 0; green, 0; blue, 0 }  ][fill={rgb, 255:red, 0; green, 0; blue, 0 }  ][line width=0.75]      (0, 0) circle [x radius= 3.02, y radius= 3.02]   ;
			\draw [shift={(260.38,150.4)}, rotate = 0] [color={rgb, 255:red, 0; green, 0; blue, 0 }  ][fill={rgb, 255:red, 0; green, 0; blue, 0 }  ][line width=0.75]      (0, 0) circle [x radius= 3.02, y radius= 3.02]   ;
			\draw   (160,188) .. controls (160,192.67) and (162.33,195) .. (167,195) -- (274.5,195) .. controls (281.17,195) and (284.5,197.33) .. (284.5,202) .. controls (284.5,197.33) and (287.83,195) .. (294.5,195)(291.5,195) -- (402,195) .. controls (406.67,195) and (409,192.67) .. (409,188) ;
			\draw   (486,96) .. controls (486,91.33) and (483.67,89) .. (479,89) -- (448.5,89) .. controls (441.83,89) and (438.5,86.67) .. (438.5,82) .. controls (438.5,86.67) and (435.17,89) .. (428.5,89)(431.5,89) -- (398,89) .. controls (393.33,89) and (391,91.33) .. (391,96) ;
			\draw   (154,110) .. controls (149.33,110) and (147,112.33) .. (147,117) -- (147,151.21) .. controls (147,157.88) and (144.67,161.21) .. (140,161.21) .. controls (144.67,161.21) and (147,164.54) .. (147,171.21)(147,168.21) -- (147,205.43) .. controls (147,210.1) and (149.33,212.43) .. (154,212.43) ;
			\draw   (154,52.43) .. controls (149.33,52.43) and (147,54.76) .. (147,59.43) -- (147,69.93) .. controls (147,76.6) and (144.67,79.93) .. (140,79.93) .. controls (144.67,79.93) and (147,83.26) .. (147,89.93)(147,86.93) -- (147,100.43) .. controls (147,105.1) and (149.33,107.43) .. (154,107.43) ;
			
			\draw (397,156.4) node [anchor=north west][inner sep=0.75pt]  [font=\small]  {$p_{1}$};
			\draw (349,156.4) node [anchor=north west][inner sep=0.75pt]  [font=\small]  {$p_{2}$};
			\draw (301,156.4) node [anchor=north west][inner sep=0.75pt]  [font=\small]  {$p_{3}$};
			\draw (253,156.4) node [anchor=north west][inner sep=0.75pt]  [font=\small]  {$p_{4}$};
			\draw (204,156.4) node [anchor=north west][inner sep=0.75pt]  [font=\small]  {$p_{5}$};
			\draw (156,157.4) node [anchor=north west][inner sep=0.75pt]  [font=\small]  {$p_{6}$};
			\draw (244,69.4) node [anchor=north west][inner sep=0.75pt]  [font=\small]  {$x$};
			\draw (267,67.4) node [anchor=north west][inner sep=0.75pt]  [font=\footnotesize]  {$M_{e}( v)$};
			\draw (292,129.4) node [anchor=north west][inner sep=0.75pt]  [font=\footnotesize]  {$M_{v}( v)$};
			\draw (248,135.4) node [anchor=north west][inner sep=0.75pt]  [font=\small]  {$v$};
			\draw (397,131.4) node [anchor=north west][inner sep=0.75pt]  [font=\small]  {$c_{1}$};
			\draw (421,99.4) node [anchor=north west][inner sep=0.75pt]  [font=\small]  {$c_{2}$};
			\draw (472,119.4) node [anchor=north west][inner sep=0.75pt]  [font=\small]  {$c_{3}$};
			\draw (473,165.4) node [anchor=north west][inner sep=0.75pt]  [font=\small]  {$c_{4}$};
			\draw (422,188.4) node [anchor=north west][inner sep=0.75pt]  [font=\small]  {$c_{5}$};
			\draw (277,207.4) node [anchor=north west][inner sep=0.75pt]    {$P$};
			\draw (432,62.4) node [anchor=north west][inner sep=0.75pt]    {$C$};
			\draw (87,153.4) node [anchor=north west][inner sep=0.75pt]    {$PF( G)$};
			\draw (75,71.4) node [anchor=north west][inner sep=0.75pt]    {$PFF( G)$};
			\draw (264,106.4) node [anchor=north west][inner sep=0.75pt]  [font=\normalsize]  {$e$};

		\end{tikzpicture}

	\end{center}

\caption{An edge between $PF(G)$ and $PFF(G)$, and an $M_v$-perfect flower containing $v$.}
	\label{21312asdasd}
	
\end{figure}
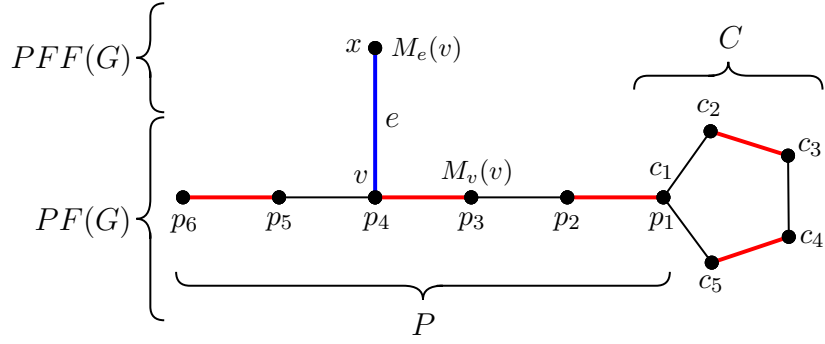

\noindent Then note that the edges $vM_v(v)$ and $vx$ are distinct. Set
\[
S:=V(P)\cup V(C).
\]
Since $(P,C)$ is an $M_v$-perfect flower, every vertex of $S$ is matched by $M_v$ to another vertex of $S$. In particular, no edge of $M_v$ has exactly one endpoint in $S$. By \cref{lematrivial}, the connected component $K$ of
\[
\bigl(V(M_v\triangle M_e),\, M_v\triangle M_e\bigr)
\]
containing $v$ is an even cycle whose edges alternate between $M_v$ and $M_e$. Since $x=M_e(v)\notin S$, if we traverse $K$ starting at $v$ along the edge $vx\in M_e$, we eventually meet a vertex of $S$ again, because $K$ is a cycle and $v\in S$. Let $u$ be the first such vertex after $v$, and let
\[
Q=q_1,\dots,q_w
\]
be the corresponding subpath of $K$, where $q_1=v$ and $q_w=u$. By construction,
\[
V(Q)\cap S=\{q_1,q_w\}=\{v,u\}.
\]
Moreover, \(Q\) alternates between edges of \(M_e\) and edges of \(M_v\),
and its first edge is
\[
q_1q_2=vx\in M_e.
\]
Since the internal vertices of \(Q\) lie outside \(S\) and no edge of
\(M_v\) joins \(S\) to \(V(G)\setminus S\), the last edge \(q_{w-1}q_w\)
cannot belong to \(M_v\). Hence
\[
q_{w-1}q_w\in M_e.
\]
Therefore,
\[
q_\ell q_{\ell+1}\in M_e
\quad\Longleftrightarrow\quad
\ell \text{ is odd},
\]
and
\[
q_\ell q_{\ell+1}\in M_v
\quad\Longleftrightarrow\quad
\ell \text{ is even},
\qquad 1\leq \ell<w.
\]
In particular, \(w\) is even. Since \(q_2=x\notin V(P)\cup V(C)\) and
\(q_w=u\in V(P)\cup V(C)\), we have \(w\geq 4\). Hence \(q_3\) is defined
and
\[
q_3\notin V(P)\cup V(C).
\]

We now consider the possible positions of \(v\) and \(u\) in
\(V(P)\cup V(C)\). We keep the roles of \(v\) and \(u\) fixed throughout the
argument: the vertex \(v\) is the endpoint of \(e\) that belongs to \(PF(G)\),
whereas \(x=q_2=M_e(v)\) is the vertex that must be forced to belong to
\(PF(G)\). Thus no case below is obtained by interchanging \(u\) and \(v\).

In each case, we explicitly identify an odd cycle \(D\) which is an
\(M\)-blossom for the relevant perfect matching \(M\), and an
\(M\)-alternating path \(R\) whose first and last edges belong to \(M\). We
also check that \(V(D)\cap V(R)\) consists of exactly one vertex.

$ $

\textbf{Case 1.} Assume that $v\in V(P)$, say $v=p_{i}$
for some odd $i$. According to the position of the vertex $u$, we
consider the following subcases.

$ $

\textbf{Case 1.1.} Assume that \(u\in V(P)\), say \(u=p_j\) for some even \(j\),
see \cref{12iuh3u1i2h}. Consider the path
\[
R:=p_1,p_2,\ldots,p_j(=q_w),q_{w-1},q_{w-2},\ldots,q_2 .
\]
The cycle \(C\) is an \(M_v\)-blossom based at \(p_1=c_1\). Let us check
that \(R\) is a valid attaching path. Its first edge is
\(p_1p_2\in M_v\). Since \(j\) is even, the edge \(p_{j-1}p_j\) also belongs
to \(M_v\). The next edge, namely \(q_wq_{w-1}\), does not belong to
\(M_v\), because \(q_{w-1}q_w\in M_e\). Along the reversed part of \(Q\),
the edges alternate with respect to \(M_v\), and the last edge
\(q_3q_2\) belongs to \(M_v\). Hence \(R\) is \(M_v\)-alternating and its
first and last edges belong to \(M_v\).

Moreover,
\[
V(C)\cap V(R)=\{p_1\},
\]
because \(V(C)\cap V(P)=\{p_1\}\) and the vertices
\(q_2,q_3,\ldots,q_{w-1}\) lie outside \(V(P)\cup V(C)\). Therefore
\(C\) together with \(R\) is an \(M_v\)-perfect flower containing
\(x=q_2\). This contradicts \(x\in PFF(G)\).

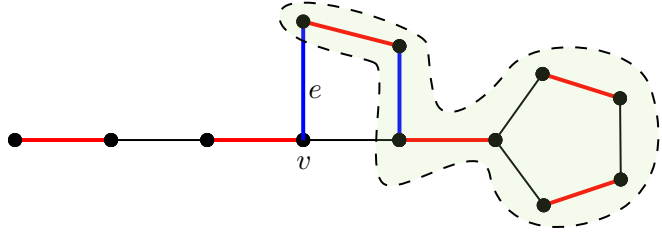
\begin{figure}[H]
	
	\begin{center}

		\tikzset{every picture/.style={line width=0.75pt}} 
		
		\begin{tikzpicture}[x=0.75pt,y=0.75pt,yscale=-1,xscale=1]
			
			\draw    (458.63,155.34) -- (459,196.32) ;
			\draw    (458.63,155.34) -- (419.54,143.03) ;
			\draw    (395.75,176.4) -- (419.54,143.03) ;
			\draw    (395.75,176.4) -- (420.14,209.34) ;
			\draw    (459,196.32) -- (420.14,209.34) ;
			\draw    (459,196.32) ;
			\draw    (420.14,209.34) ;
			\draw    (419.54,143.03) ;
			\draw    (395.75,176.4) ;
			\draw    (395.75,176.4) ;
			\draw    (420.14,209.34) ;
			\draw    (459,196.32) ;
			\draw    (419.54,143.03) ;
			\draw    (395.75,176.4) ;
			\draw    (420.14,209.34) ;
			\draw    (419.54,143.03) ;
			\draw    (458.63,155.34) ;
			\draw    (459,196.32) ;
			\draw    (395.75,176.4) ;
			\draw [line width=0.75]    (420.14,209.34) ;
			\draw [shift={(420.14,209.34)}, rotate = 0] [color={rgb, 255:red, 0; green, 0; blue, 0 }  ][fill={rgb, 255:red, 0; green, 0; blue, 0 }  ][line width=0.75]      (0, 0) circle [x radius= 3.02, y radius= 3.02]   ;
			\draw [line width=0.75]    (395.75,176.4) ;
			\draw [shift={(395.75,176.4)}, rotate = 0] [color={rgb, 255:red, 0; green, 0; blue, 0 }  ][fill={rgb, 255:red, 0; green, 0; blue, 0 }  ][line width=0.75]      (0, 0) circle [x radius= 3.02, y radius= 3.02]   ;
			\draw [line width=0.75]    (419.54,143.03) ;
			\draw [shift={(419.54,143.03)}, rotate = 0] [color={rgb, 255:red, 0; green, 0; blue, 0 }  ][fill={rgb, 255:red, 0; green, 0; blue, 0 }  ][line width=0.75]      (0, 0) circle [x radius= 3.02, y radius= 3.02]   ;
			\draw [line width=0.75]    (458.63,155.34) ;
			\draw [shift={(458.63,155.34)}, rotate = 0] [color={rgb, 255:red, 0; green, 0; blue, 0 }  ][fill={rgb, 255:red, 0; green, 0; blue, 0 }  ][line width=0.75]      (0, 0) circle [x radius= 3.02, y radius= 3.02]   ;
			\draw [line width=0.75]    (459,196.32) ;
			\draw [shift={(459,196.32)}, rotate = 0] [color={rgb, 255:red, 0; green, 0; blue, 0 }  ][fill={rgb, 255:red, 0; green, 0; blue, 0 }  ][line width=0.75]      (0, 0) circle [x radius= 3.02, y radius= 3.02]   ;
			\draw    (347.29,176.4) -- (395.75,176.4) ;
			\draw [shift={(395.75,176.4)}, rotate = 0] [color={rgb, 255:red, 0; green, 0; blue, 0 }  ][fill={rgb, 255:red, 0; green, 0; blue, 0 }  ][line width=0.75]      (0, 0) circle [x radius= 3.02, y radius= 3.02]   ;
			\draw [shift={(347.29,176.4)}, rotate = 0] [color={rgb, 255:red, 0; green, 0; blue, 0 }  ][fill={rgb, 255:red, 0; green, 0; blue, 0 }  ][line width=0.75]      (0, 0) circle [x radius= 3.02, y radius= 3.02]   ;
			\draw    (298.83,176.4) -- (347.29,176.4) ;
			\draw [shift={(347.29,176.4)}, rotate = 0] [color={rgb, 255:red, 0; green, 0; blue, 0 }  ][fill={rgb, 255:red, 0; green, 0; blue, 0 }  ][line width=0.75]      (0, 0) circle [x radius= 3.02, y radius= 3.02]   ;
			\draw [shift={(298.83,176.4)}, rotate = 0] [color={rgb, 255:red, 0; green, 0; blue, 0 }  ][fill={rgb, 255:red, 0; green, 0; blue, 0 }  ][line width=0.75]      (0, 0) circle [x radius= 3.02, y radius= 3.02]   ;
			\draw    (250.38,176.4) -- (298.83,176.4) ;
			\draw [shift={(298.83,176.4)}, rotate = 0] [color={rgb, 255:red, 0; green, 0; blue, 0 }  ][fill={rgb, 255:red, 0; green, 0; blue, 0 }  ][line width=0.75]      (0, 0) circle [x radius= 3.02, y radius= 3.02]   ;
			\draw [shift={(250.38,176.4)}, rotate = 0] [color={rgb, 255:red, 0; green, 0; blue, 0 }  ][fill={rgb, 255:red, 0; green, 0; blue, 0 }  ][line width=0.75]      (0, 0) circle [x radius= 3.02, y radius= 3.02]   ;
			\draw    (201.92,176.4) -- (250.38,176.4) ;
			\draw [shift={(250.38,176.4)}, rotate = 0] [color={rgb, 255:red, 0; green, 0; blue, 0 }  ][fill={rgb, 255:red, 0; green, 0; blue, 0 }  ][line width=0.75]      (0, 0) circle [x radius= 3.02, y radius= 3.02]   ;
			\draw [shift={(201.92,176.4)}, rotate = 0] [color={rgb, 255:red, 0; green, 0; blue, 0 }  ][fill={rgb, 255:red, 0; green, 0; blue, 0 }  ][line width=0.75]      (0, 0) circle [x radius= 3.02, y radius= 3.02]   ;
			\draw    (153.46,176.4) -- (173.81,176.4) -- (201.92,176.4) ;
			\draw [shift={(201.92,176.4)}, rotate = 0] [color={rgb, 255:red, 0; green, 0; blue, 0 }  ][fill={rgb, 255:red, 0; green, 0; blue, 0 }  ][line width=0.75]      (0, 0) circle [x radius= 3.02, y radius= 3.02]   ;
			\draw [shift={(153.46,176.4)}, rotate = 0] [color={rgb, 255:red, 0; green, 0; blue, 0 }  ][fill={rgb, 255:red, 0; green, 0; blue, 0 }  ][line width=0.75]      (0, 0) circle [x radius= 3.02, y radius= 3.02]   ;
			\draw    (347.38,176.4) -- (347.38,129) ;
			\draw [shift={(347.38,129)}, rotate = 270] [color={rgb, 255:red, 0; green, 0; blue, 0 }  ][fill={rgb, 255:red, 0; green, 0; blue, 0 }  ][line width=0.75]      (0, 0) circle [x radius= 3.02, y radius= 3.02]   ;
			\draw [shift={(347.38,176.4)}, rotate = 270] [color={rgb, 255:red, 0; green, 0; blue, 0 }  ][fill={rgb, 255:red, 0; green, 0; blue, 0 }  ][line width=0.75]      (0, 0) circle [x radius= 3.02, y radius= 3.02]   ;
			\draw [color={rgb, 255:red, 255; green, 0; blue, 0 }  ,draw opacity=1 ][line width=1.5]    (153.46,176.4) -- (201.92,176.4) ;
			\draw [color={rgb, 255:red, 255; green, 0; blue, 0 }  ,draw opacity=1 ][line width=1.5]    (250.38,176.4) -- (298.83,176.4) ;
			\draw [color={rgb, 255:red, 255; green, 0; blue, 0 }  ,draw opacity=1 ][line width=1.5]    (347.29,176.4) -- (395.75,176.4) ;
			\draw [color={rgb, 255:red, 255; green, 0; blue, 0 }  ,draw opacity=1 ][line width=1.5]    (419.54,143.03) -- (458.63,155.34) ;
			\draw [color={rgb, 255:red, 255; green, 0; blue, 0 }  ,draw opacity=1 ][line width=1.5]    (420.14,209.34) -- (459,196.32) ;
			\draw [color={rgb, 255:red, 0; green, 0; blue, 255 }  ,draw opacity=1 ][line width=1.5]    (347.38,176.4) -- (347.38,129) ;
			\draw    (153.46,176.4) ;
			\draw [shift={(153.46,176.4)}, rotate = 0] [color={rgb, 255:red, 0; green, 0; blue, 0 }  ][fill={rgb, 255:red, 0; green, 0; blue, 0 }  ][line width=0.75]      (0, 0) circle [x radius= 3.02, y radius= 3.02]   ;
			\draw [shift={(153.46,176.4)}, rotate = 0] [color={rgb, 255:red, 0; green, 0; blue, 0 }  ][fill={rgb, 255:red, 0; green, 0; blue, 0 }  ][line width=0.75]      (0, 0) circle [x radius= 3.02, y radius= 3.02]   ;
			\draw    (201.92,176.4) ;
			\draw [shift={(201.92,176.4)}, rotate = 0] [color={rgb, 255:red, 0; green, 0; blue, 0 }  ][fill={rgb, 255:red, 0; green, 0; blue, 0 }  ][line width=0.75]      (0, 0) circle [x radius= 3.02, y radius= 3.02]   ;
			\draw [shift={(201.92,176.4)}, rotate = 0] [color={rgb, 255:red, 0; green, 0; blue, 0 }  ][fill={rgb, 255:red, 0; green, 0; blue, 0 }  ][line width=0.75]      (0, 0) circle [x radius= 3.02, y radius= 3.02]   ;
			\draw    (420.14,209.34) ;
			\draw [shift={(420.14,209.34)}, rotate = 0] [color={rgb, 255:red, 0; green, 0; blue, 0 }  ][fill={rgb, 255:red, 0; green, 0; blue, 0 }  ][line width=0.75]      (0, 0) circle [x radius= 3.02, y radius= 3.02]   ;
			\draw [shift={(420.14,209.34)}, rotate = 0] [color={rgb, 255:red, 0; green, 0; blue, 0 }  ][fill={rgb, 255:red, 0; green, 0; blue, 0 }  ][line width=0.75]      (0, 0) circle [x radius= 3.02, y radius= 3.02]   ;
			\draw    (459,196.32) ;
			\draw [shift={(459,196.32)}, rotate = 0] [color={rgb, 255:red, 0; green, 0; blue, 0 }  ][fill={rgb, 255:red, 0; green, 0; blue, 0 }  ][line width=0.75]      (0, 0) circle [x radius= 3.02, y radius= 3.02]   ;
			\draw [shift={(459,196.32)}, rotate = 0] [color={rgb, 255:red, 0; green, 0; blue, 0 }  ][fill={rgb, 255:red, 0; green, 0; blue, 0 }  ][line width=0.75]      (0, 0) circle [x radius= 3.02, y radius= 3.02]   ;
			\draw    (458.63,155.34) ;
			\draw [shift={(458.63,155.34)}, rotate = 0] [color={rgb, 255:red, 0; green, 0; blue, 0 }  ][fill={rgb, 255:red, 0; green, 0; blue, 0 }  ][line width=0.75]      (0, 0) circle [x radius= 3.02, y radius= 3.02]   ;
			\draw [shift={(458.63,155.34)}, rotate = 0] [color={rgb, 255:red, 0; green, 0; blue, 0 }  ][fill={rgb, 255:red, 0; green, 0; blue, 0 }  ][line width=0.75]      (0, 0) circle [x radius= 3.02, y radius= 3.02]   ;
			\draw    (419.54,143.03) ;
			\draw [shift={(419.54,143.03)}, rotate = 0] [color={rgb, 255:red, 0; green, 0; blue, 0 }  ][fill={rgb, 255:red, 0; green, 0; blue, 0 }  ][line width=0.75]      (0, 0) circle [x radius= 3.02, y radius= 3.02]   ;
			\draw [shift={(419.54,143.03)}, rotate = 0] [color={rgb, 255:red, 0; green, 0; blue, 0 }  ][fill={rgb, 255:red, 0; green, 0; blue, 0 }  ][line width=0.75]      (0, 0) circle [x radius= 3.02, y radius= 3.02]   ;
			\draw    (395.75,176.4) ;
			\draw [shift={(395.75,176.4)}, rotate = 0] [color={rgb, 255:red, 0; green, 0; blue, 0 }  ][fill={rgb, 255:red, 0; green, 0; blue, 0 }  ][line width=0.75]      (0, 0) circle [x radius= 3.02, y radius= 3.02]   ;
			\draw [shift={(395.75,176.4)}, rotate = 0] [color={rgb, 255:red, 0; green, 0; blue, 0 }  ][fill={rgb, 255:red, 0; green, 0; blue, 0 }  ][line width=0.75]      (0, 0) circle [x radius= 3.02, y radius= 3.02]   ;
			\draw    (347.29,176.4) ;
			\draw [shift={(347.29,176.4)}, rotate = 0] [color={rgb, 255:red, 0; green, 0; blue, 0 }  ][fill={rgb, 255:red, 0; green, 0; blue, 0 }  ][line width=0.75]      (0, 0) circle [x radius= 3.02, y radius= 3.02]   ;
			\draw [shift={(347.29,176.4)}, rotate = 0] [color={rgb, 255:red, 0; green, 0; blue, 0 }  ][fill={rgb, 255:red, 0; green, 0; blue, 0 }  ][line width=0.75]      (0, 0) circle [x radius= 3.02, y radius= 3.02]   ;
			\draw    (298.83,176.4) ;
			\draw [shift={(298.83,176.4)}, rotate = 0] [color={rgb, 255:red, 0; green, 0; blue, 0 }  ][fill={rgb, 255:red, 0; green, 0; blue, 0 }  ][line width=0.75]      (0, 0) circle [x radius= 3.02, y radius= 3.02]   ;
			\draw [shift={(298.83,176.4)}, rotate = 0] [color={rgb, 255:red, 0; green, 0; blue, 0 }  ][fill={rgb, 255:red, 0; green, 0; blue, 0 }  ][line width=0.75]      (0, 0) circle [x radius= 3.02, y radius= 3.02]   ;
			\draw    (250.38,176.4) ;
			\draw [shift={(250.38,176.4)}, rotate = 0] [color={rgb, 255:red, 0; green, 0; blue, 0 }  ][fill={rgb, 255:red, 0; green, 0; blue, 0 }  ][line width=0.75]      (0, 0) circle [x radius= 3.02, y radius= 3.02]   ;
			\draw [shift={(250.38,176.4)}, rotate = 0] [color={rgb, 255:red, 0; green, 0; blue, 0 }  ][fill={rgb, 255:red, 0; green, 0; blue, 0 }  ][line width=0.75]      (0, 0) circle [x radius= 3.02, y radius= 3.02]   ;
			\draw [color={rgb, 255:red, 255; green, 0; blue, 0 }  ,draw opacity=1 ][line width=1.5]    (298.83,116.57) -- (347.38,129) ;
			\draw [color={rgb, 255:red, 0; green, 0; blue, 255 }  ,draw opacity=1 ][line width=1.5]    (298.83,176.4) -- (298.83,116.57) ;
			\draw    (298.83,116.57) ;
			\draw [shift={(298.83,116.57)}, rotate = 0] [color={rgb, 255:red, 0; green, 0; blue, 0 }  ][fill={rgb, 255:red, 0; green, 0; blue, 0 }  ][line width=0.75]      (0, 0) circle [x radius= 3.02, y radius= 3.02]   ;
			\draw [shift={(298.83,116.57)}, rotate = 0] [color={rgb, 255:red, 0; green, 0; blue, 0 }  ][fill={rgb, 255:red, 0; green, 0; blue, 0 }  ][line width=0.75]      (0, 0) circle [x radius= 3.02, y radius= 3.02]   ;
			\draw    (347.38,129) ;
			\draw [shift={(347.38,129)}, rotate = 0] [color={rgb, 255:red, 0; green, 0; blue, 0 }  ][fill={rgb, 255:red, 0; green, 0; blue, 0 }  ][line width=0.75]      (0, 0) circle [x radius= 3.02, y radius= 3.02]   ;
			\draw [shift={(347.38,129)}, rotate = 0] [color={rgb, 255:red, 0; green, 0; blue, 0 }  ][fill={rgb, 255:red, 0; green, 0; blue, 0 }  ][line width=0.75]      (0, 0) circle [x radius= 3.02, y radius= 3.02]   ;
			\draw  [fill={rgb, 255:red, 184; green, 233; blue, 134 }  ,fill opacity=0.15 ][dash pattern={on 4.5pt off 4.5pt}] (335,137.57) .. controls (328,133.57) and (284,128.57) .. (287,112.57) .. controls (290,96.57) and (359,119.57) .. (361,127.57) .. controls (363,135.57) and (362,146.57) .. (366,157.57) .. controls (370,168.57) and (389,159.14) .. (402,141.14) .. controls (415,123.14) and (466,126.14) .. (474,150.14) .. controls (482,174.14) and (480,205.14) .. (449,216.14) .. controls (418,227.14) and (397,215.14) .. (393,194.14) .. controls (389,173.14) and (348,206.14) .. (339,198.14) .. controls (330,190.14) and (342,141.57) .. (335,137.57) -- cycle ;
			
			\draw (294,183.4) node [anchor=north west][inner sep=0.75pt]  [font=\small]  {$v$};
			\draw (300,147.4) node [anchor=north west][inner sep=0.75pt]  [font=\small]  {$e$};

		\end{tikzpicture}

	\end{center}

\caption{Case 1.1: both vertices lie on $P$.}
	\label{12iuh3u1i2h}
	
\end{figure}

\textbf{Case 1.2.} Assume that \(u\in V(P)\), say \(u=p_j\) for some odd \(j\),
see \cref{j123oi12j3} for the case \(j>i\). Since \(u\neq v\), we have \(i\neq j\). Let
\[
a:=\min\{i,j\},\qquad b:=\max\{i,j\},\qquad z:=p_b .
\]
Let \(D\) be the cycle obtained by joining \(Q\) with the \(p_j\)-\(p_i\)
subpath of \(P\). Equivalently,
\[
D=
\begin{cases}
	q_1,q_2,\ldots,q_w,p_{j-1},p_{j-2},\ldots,p_i,
	& \text{if } i<j,\\[2mm]
	q_1,q_2,\ldots,q_w,p_{j+1},p_{j+2},\ldots,p_i,
	& \text{if } j<i.
\end{cases}
\]
Since \(i\) and \(j\) are odd, the two edges of \(D\) incident with
\(z=p_b\) do not belong to \(M_v\), whereas all the remaining edges of
\(D\) alternate with respect to \(M_v\). Thus \(D\) is an \(M_v\)-blossom
based at \(z\). In particular,
\[
|E(D)\cap M_v|=\frac{|V(D)|-1}{2}.
\]
Since \(b\) is odd and \(t\) is even, the vertex \(p_{b+1}\) exists and
\[
zp_{b+1}=p_bp_{b+1}\in M_v.
\]
The length-one path
\[
R:=z,p_{b+1}
\]
is therefore \(M_v\)-alternating and its unique edge belongs to \(M_v\).
Also, \(p_{b+1}\notin V(D)\), so
\[
V(D)\cap V(R)=\{z\}.
\]
Hence \(D\) together with \(R\) is an \(M_v\)-perfect flower containing
\(x=q_2\). This contradicts \(x\in PFF(G)\).

\begin{figure}[H]
	
	\begin{center}

		\tikzset{every picture/.style={line width=0.75pt}} 
		
		\begin{tikzpicture}[x=0.75pt,y=0.75pt,yscale=-1,xscale=1]
			
			\draw    (458.63,155.34) -- (459,196.32) ;
			\draw    (458.63,155.34) -- (419.54,143.03) ;
			\draw    (395.75,176.4) -- (419.54,143.03) ;
			\draw    (395.75,176.4) -- (420.14,209.34) ;
			\draw    (459,196.32) -- (420.14,209.34) ;
			\draw    (459,196.32) ;
			\draw    (420.14,209.34) ;
			\draw    (419.54,143.03) ;
			\draw    (395.75,176.4) ;
			\draw    (395.75,176.4) ;
			\draw    (420.14,209.34) ;
			\draw    (459,196.32) ;
			\draw    (419.54,143.03) ;
			\draw    (395.75,176.4) ;
			\draw    (420.14,209.34) ;
			\draw    (419.54,143.03) ;
			\draw    (458.63,155.34) ;
			\draw    (459,196.32) ;
			\draw    (395.75,176.4) ;
			\draw [line width=0.75]    (420.14,209.34) ;
			\draw [shift={(420.14,209.34)}, rotate = 0] [color={rgb, 255:red, 0; green, 0; blue, 0 }  ][fill={rgb, 255:red, 0; green, 0; blue, 0 }  ][line width=0.75]      (0, 0) circle [x radius= 3.02, y radius= 3.02]   ;
			\draw [line width=0.75]    (395.75,176.4) ;
			\draw [shift={(395.75,176.4)}, rotate = 0] [color={rgb, 255:red, 0; green, 0; blue, 0 }  ][fill={rgb, 255:red, 0; green, 0; blue, 0 }  ][line width=0.75]      (0, 0) circle [x radius= 3.02, y radius= 3.02]   ;
			\draw [line width=0.75]    (419.54,143.03) ;
			\draw [shift={(419.54,143.03)}, rotate = 0] [color={rgb, 255:red, 0; green, 0; blue, 0 }  ][fill={rgb, 255:red, 0; green, 0; blue, 0 }  ][line width=0.75]      (0, 0) circle [x radius= 3.02, y radius= 3.02]   ;
			\draw [line width=0.75]    (458.63,155.34) ;
			\draw [shift={(458.63,155.34)}, rotate = 0] [color={rgb, 255:red, 0; green, 0; blue, 0 }  ][fill={rgb, 255:red, 0; green, 0; blue, 0 }  ][line width=0.75]      (0, 0) circle [x radius= 3.02, y radius= 3.02]   ;
			\draw [line width=0.75]    (459,196.32) ;
			\draw [shift={(459,196.32)}, rotate = 0] [color={rgb, 255:red, 0; green, 0; blue, 0 }  ][fill={rgb, 255:red, 0; green, 0; blue, 0 }  ][line width=0.75]      (0, 0) circle [x radius= 3.02, y radius= 3.02]   ;
			\draw    (347.29,176.4) -- (395.75,176.4) ;
			\draw [shift={(395.75,176.4)}, rotate = 0] [color={rgb, 255:red, 0; green, 0; blue, 0 }  ][fill={rgb, 255:red, 0; green, 0; blue, 0 }  ][line width=0.75]      (0, 0) circle [x radius= 3.02, y radius= 3.02]   ;
			\draw [shift={(347.29,176.4)}, rotate = 0] [color={rgb, 255:red, 0; green, 0; blue, 0 }  ][fill={rgb, 255:red, 0; green, 0; blue, 0 }  ][line width=0.75]      (0, 0) circle [x radius= 3.02, y radius= 3.02]   ;
			\draw    (298.83,176.4) -- (347.29,176.4) ;
			\draw [shift={(347.29,176.4)}, rotate = 0] [color={rgb, 255:red, 0; green, 0; blue, 0 }  ][fill={rgb, 255:red, 0; green, 0; blue, 0 }  ][line width=0.75]      (0, 0) circle [x radius= 3.02, y radius= 3.02]   ;
			\draw [shift={(298.83,176.4)}, rotate = 0] [color={rgb, 255:red, 0; green, 0; blue, 0 }  ][fill={rgb, 255:red, 0; green, 0; blue, 0 }  ][line width=0.75]      (0, 0) circle [x radius= 3.02, y radius= 3.02]   ;
			\draw    (250.38,176.4) -- (298.83,176.4) ;
			\draw [shift={(298.83,176.4)}, rotate = 0] [color={rgb, 255:red, 0; green, 0; blue, 0 }  ][fill={rgb, 255:red, 0; green, 0; blue, 0 }  ][line width=0.75]      (0, 0) circle [x radius= 3.02, y radius= 3.02]   ;
			\draw [shift={(250.38,176.4)}, rotate = 0] [color={rgb, 255:red, 0; green, 0; blue, 0 }  ][fill={rgb, 255:red, 0; green, 0; blue, 0 }  ][line width=0.75]      (0, 0) circle [x radius= 3.02, y radius= 3.02]   ;
			\draw    (201.92,176.4) -- (250.38,176.4) ;
			\draw [shift={(250.38,176.4)}, rotate = 0] [color={rgb, 255:red, 0; green, 0; blue, 0 }  ][fill={rgb, 255:red, 0; green, 0; blue, 0 }  ][line width=0.75]      (0, 0) circle [x radius= 3.02, y radius= 3.02]   ;
			\draw [shift={(201.92,176.4)}, rotate = 0] [color={rgb, 255:red, 0; green, 0; blue, 0 }  ][fill={rgb, 255:red, 0; green, 0; blue, 0 }  ][line width=0.75]      (0, 0) circle [x radius= 3.02, y radius= 3.02]   ;
			\draw    (153.46,176.4) -- (173.81,176.4) -- (201.92,176.4) ;
			\draw [shift={(201.92,176.4)}, rotate = 0] [color={rgb, 255:red, 0; green, 0; blue, 0 }  ][fill={rgb, 255:red, 0; green, 0; blue, 0 }  ][line width=0.75]      (0, 0) circle [x radius= 3.02, y radius= 3.02]   ;
			\draw [shift={(153.46,176.4)}, rotate = 0] [color={rgb, 255:red, 0; green, 0; blue, 0 }  ][fill={rgb, 255:red, 0; green, 0; blue, 0 }  ][line width=0.75]      (0, 0) circle [x radius= 3.02, y radius= 3.02]   ;
			\draw    (298.38,176.4) -- (298.38,129) ;
			\draw [shift={(298.38,129)}, rotate = 270] [color={rgb, 255:red, 0; green, 0; blue, 0 }  ][fill={rgb, 255:red, 0; green, 0; blue, 0 }  ][line width=0.75]      (0, 0) circle [x radius= 3.02, y radius= 3.02]   ;
			\draw [shift={(298.38,176.4)}, rotate = 270] [color={rgb, 255:red, 0; green, 0; blue, 0 }  ][fill={rgb, 255:red, 0; green, 0; blue, 0 }  ][line width=0.75]      (0, 0) circle [x radius= 3.02, y radius= 3.02]   ;
			\draw [color={rgb, 255:red, 255; green, 0; blue, 0 }  ,draw opacity=1 ][line width=1.5]    (153.46,176.4) -- (201.92,176.4) ;
			\draw [color={rgb, 255:red, 255; green, 0; blue, 0 }  ,draw opacity=1 ][line width=1.5]    (250.38,176.4) -- (298.83,176.4) ;
			\draw [color={rgb, 255:red, 255; green, 0; blue, 0 }  ,draw opacity=1 ][line width=1.5]    (347.29,176.4) -- (395.75,176.4) ;
			\draw [color={rgb, 255:red, 255; green, 0; blue, 0 }  ,draw opacity=1 ][line width=1.5]    (419.54,143.03) -- (458.63,155.34) ;
			\draw [color={rgb, 255:red, 255; green, 0; blue, 0 }  ,draw opacity=1 ][line width=1.5]    (420.14,209.34) -- (459,196.32) ;
			\draw [color={rgb, 255:red, 0; green, 0; blue, 255 }  ,draw opacity=1 ][line width=1.5]    (298.38,176.4) -- (298.38,129) ;
			\draw    (153.46,176.4) ;
			\draw [shift={(153.46,176.4)}, rotate = 0] [color={rgb, 255:red, 0; green, 0; blue, 0 }  ][fill={rgb, 255:red, 0; green, 0; blue, 0 }  ][line width=0.75]      (0, 0) circle [x radius= 3.02, y radius= 3.02]   ;
			\draw [shift={(153.46,176.4)}, rotate = 0] [color={rgb, 255:red, 0; green, 0; blue, 0 }  ][fill={rgb, 255:red, 0; green, 0; blue, 0 }  ][line width=0.75]      (0, 0) circle [x radius= 3.02, y radius= 3.02]   ;
			\draw    (201.92,176.4) ;
			\draw [shift={(201.92,176.4)}, rotate = 0] [color={rgb, 255:red, 0; green, 0; blue, 0 }  ][fill={rgb, 255:red, 0; green, 0; blue, 0 }  ][line width=0.75]      (0, 0) circle [x radius= 3.02, y radius= 3.02]   ;
			\draw [shift={(201.92,176.4)}, rotate = 0] [color={rgb, 255:red, 0; green, 0; blue, 0 }  ][fill={rgb, 255:red, 0; green, 0; blue, 0 }  ][line width=0.75]      (0, 0) circle [x radius= 3.02, y radius= 3.02]   ;
			\draw    (420.14,209.34) ;
			\draw [shift={(420.14,209.34)}, rotate = 0] [color={rgb, 255:red, 0; green, 0; blue, 0 }  ][fill={rgb, 255:red, 0; green, 0; blue, 0 }  ][line width=0.75]      (0, 0) circle [x radius= 3.02, y radius= 3.02]   ;
			\draw [shift={(420.14,209.34)}, rotate = 0] [color={rgb, 255:red, 0; green, 0; blue, 0 }  ][fill={rgb, 255:red, 0; green, 0; blue, 0 }  ][line width=0.75]      (0, 0) circle [x radius= 3.02, y radius= 3.02]   ;
			\draw    (459,196.32) ;
			\draw [shift={(459,196.32)}, rotate = 0] [color={rgb, 255:red, 0; green, 0; blue, 0 }  ][fill={rgb, 255:red, 0; green, 0; blue, 0 }  ][line width=0.75]      (0, 0) circle [x radius= 3.02, y radius= 3.02]   ;
			\draw [shift={(459,196.32)}, rotate = 0] [color={rgb, 255:red, 0; green, 0; blue, 0 }  ][fill={rgb, 255:red, 0; green, 0; blue, 0 }  ][line width=0.75]      (0, 0) circle [x radius= 3.02, y radius= 3.02]   ;
			\draw    (458.63,155.34) ;
			\draw [shift={(458.63,155.34)}, rotate = 0] [color={rgb, 255:red, 0; green, 0; blue, 0 }  ][fill={rgb, 255:red, 0; green, 0; blue, 0 }  ][line width=0.75]      (0, 0) circle [x radius= 3.02, y radius= 3.02]   ;
			\draw [shift={(458.63,155.34)}, rotate = 0] [color={rgb, 255:red, 0; green, 0; blue, 0 }  ][fill={rgb, 255:red, 0; green, 0; blue, 0 }  ][line width=0.75]      (0, 0) circle [x radius= 3.02, y radius= 3.02]   ;
			\draw    (419.54,143.03) ;
			\draw [shift={(419.54,143.03)}, rotate = 0] [color={rgb, 255:red, 0; green, 0; blue, 0 }  ][fill={rgb, 255:red, 0; green, 0; blue, 0 }  ][line width=0.75]      (0, 0) circle [x radius= 3.02, y radius= 3.02]   ;
			\draw [shift={(419.54,143.03)}, rotate = 0] [color={rgb, 255:red, 0; green, 0; blue, 0 }  ][fill={rgb, 255:red, 0; green, 0; blue, 0 }  ][line width=0.75]      (0, 0) circle [x radius= 3.02, y radius= 3.02]   ;
			\draw    (395.75,176.4) ;
			\draw [shift={(395.75,176.4)}, rotate = 0] [color={rgb, 255:red, 0; green, 0; blue, 0 }  ][fill={rgb, 255:red, 0; green, 0; blue, 0 }  ][line width=0.75]      (0, 0) circle [x radius= 3.02, y radius= 3.02]   ;
			\draw [shift={(395.75,176.4)}, rotate = 0] [color={rgb, 255:red, 0; green, 0; blue, 0 }  ][fill={rgb, 255:red, 0; green, 0; blue, 0 }  ][line width=0.75]      (0, 0) circle [x radius= 3.02, y radius= 3.02]   ;
			\draw    (347.29,176.4) ;
			\draw [shift={(347.29,176.4)}, rotate = 0] [color={rgb, 255:red, 0; green, 0; blue, 0 }  ][fill={rgb, 255:red, 0; green, 0; blue, 0 }  ][line width=0.75]      (0, 0) circle [x radius= 3.02, y radius= 3.02]   ;
			\draw [shift={(347.29,176.4)}, rotate = 0] [color={rgb, 255:red, 0; green, 0; blue, 0 }  ][fill={rgb, 255:red, 0; green, 0; blue, 0 }  ][line width=0.75]      (0, 0) circle [x radius= 3.02, y radius= 3.02]   ;
			\draw    (298.83,176.4) ;
			\draw [shift={(298.83,176.4)}, rotate = 0] [color={rgb, 255:red, 0; green, 0; blue, 0 }  ][fill={rgb, 255:red, 0; green, 0; blue, 0 }  ][line width=0.75]      (0, 0) circle [x radius= 3.02, y radius= 3.02]   ;
			\draw [shift={(298.83,176.4)}, rotate = 0] [color={rgb, 255:red, 0; green, 0; blue, 0 }  ][fill={rgb, 255:red, 0; green, 0; blue, 0 }  ][line width=0.75]      (0, 0) circle [x radius= 3.02, y radius= 3.02]   ;
			\draw    (250.38,176.4) ;
			\draw [shift={(250.38,176.4)}, rotate = 0] [color={rgb, 255:red, 0; green, 0; blue, 0 }  ][fill={rgb, 255:red, 0; green, 0; blue, 0 }  ][line width=0.75]      (0, 0) circle [x radius= 3.02, y radius= 3.02]   ;
			\draw [shift={(250.38,176.4)}, rotate = 0] [color={rgb, 255:red, 0; green, 0; blue, 0 }  ][fill={rgb, 255:red, 0; green, 0; blue, 0 }  ][line width=0.75]      (0, 0) circle [x radius= 3.02, y radius= 3.02]   ;
			\draw [color={rgb, 255:red, 255; green, 0; blue, 0 }  ,draw opacity=1 ][line width=1.5]    (201.83,116.57) -- (298.38,129) ;
			\draw [color={rgb, 255:red, 0; green, 0; blue, 255 }  ,draw opacity=1 ][line width=1.5]    (201.83,176.4) -- (201.83,116.57) ;
			\draw    (201.83,116.57) ;
			\draw [shift={(201.83,116.57)}, rotate = 0] [color={rgb, 255:red, 0; green, 0; blue, 0 }  ][fill={rgb, 255:red, 0; green, 0; blue, 0 }  ][line width=0.75]      (0, 0) circle [x radius= 3.02, y radius= 3.02]   ;
			\draw [shift={(201.83,116.57)}, rotate = 0] [color={rgb, 255:red, 0; green, 0; blue, 0 }  ][fill={rgb, 255:red, 0; green, 0; blue, 0 }  ][line width=0.75]      (0, 0) circle [x radius= 3.02, y radius= 3.02]   ;
			\draw    (298.38,129) ;
			\draw [shift={(298.38,129)}, rotate = 0] [color={rgb, 255:red, 0; green, 0; blue, 0 }  ][fill={rgb, 255:red, 0; green, 0; blue, 0 }  ][line width=0.75]      (0, 0) circle [x radius= 3.02, y radius= 3.02]   ;
			\draw [shift={(298.38,129)}, rotate = 0] [color={rgb, 255:red, 0; green, 0; blue, 0 }  ][fill={rgb, 255:red, 0; green, 0; blue, 0 }  ][line width=0.75]      (0, 0) circle [x radius= 3.02, y radius= 3.02]   ;
			\draw  [fill={rgb, 255:red, 184; green, 233; blue, 134 }  ,fill opacity=0.15 ][dash pattern={on 4.5pt off 4.5pt}] (144,175.57) .. controls (144,160.57) and (183,167.57) .. (189,164.57) .. controls (195,161.57) and (186,116.57) .. (196,108.57) .. controls (206,100.57) and (240,108.57) .. (259,112.57) .. controls (278,116.57) and (310,111.57) .. (311,132.57) .. controls (312,153.57) and (312.18,164.6) .. (312,174.57) .. controls (311.82,184.54) and (313,196.57) .. (301,199.57) .. controls (289,202.57) and (300,186.57) .. (273,186.57) .. controls (246,186.57) and (197.97,184.44) .. (184,183.57) .. controls (170.03,182.7) and (144,190.57) .. (144,175.57) -- cycle ;
			
			\draw (294,183.4) node [anchor=north west][inner sep=0.75pt]  [font=\small]  {$v$};
			\draw (299,147.4) node [anchor=north west][inner sep=0.75pt]  [font=\small]  {$e$};

		\end{tikzpicture}

	\end{center}

\caption{Case 1.2: both vertices lie on $P$.}
	\label{j123oi12j3}
	
\end{figure}
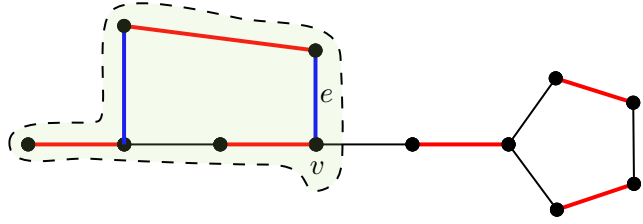

\textbf{Case 1.3.} Assume that \(u\in V(C)\setminus V(P)\), say \(u=c_j\), see
\cref{asdji12i3}. Let \(X\) be the \(u\)-\(p_1\) path in \(C\) whose first edge
belongs to \(M_v\). Such a path exists because exactly one of the two edges
of \(C\) incident with \(u\) belongs to \(M_v\). Its last edge does not
belong to \(M_v\), because both edges of \(C\) incident with \(p_1=c_1\) are
outside \(M_v\).

Let \(P[p_1,p_i]\) denote the \(p_1\)-\(p_i\) subpath of \(P\), possibly of
length zero if \(i=1\). Consider the cycle
\[
D:=X\,P[p_1,p_i]\,Q,
\]
where the paths are concatenated in the natural way. At the vertex
\(v=p_i\), the two incident edges of \(D\) do not belong to \(M_v\): one is
the first edge \(q_1q_2\) of \(Q\), and the other is either the last edge of
\(P[p_1,p_i]\), when \(i>1\), or the last edge of \(X\), when \(i=1\).
All other edges of \(D\) alternate with respect to \(M_v\). Hence \(D\) is
an \(M_v\)-blossom based at \(v\), and
\[
|E(D)\cap M_v|=\frac{|V(D)|-1}{2}.
\]
The length-one path
\[
R:=v,M_v(v)
\]
has its unique edge in \(M_v\). Moreover, \(M_v(v)\notin V(D)\), and hence
\[
V(D)\cap V(R)=\{v\}.
\]
Thus \(D\) together with \(R\) is an \(M_v\)-perfect flower containing
\(x=q_2\). This contradicts \(x\in PFF(G)\).

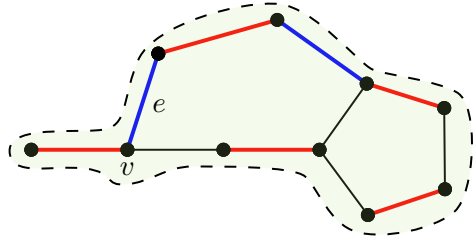
\begin{figure}[H]
	
	\begin{center}

		\tikzset{every picture/.style={line width=0.75pt}} 
		
		\begin{tikzpicture}[x=0.75pt,y=0.75pt,yscale=-1,xscale=1]
			
			\draw [color={rgb, 255:red, 0; green, 0; blue, 255 }  ,draw opacity=1 ][line width=1.5]    (203.83,142.4) -- (219.43,93.86) ;
			\draw [color={rgb, 255:red, 255; green, 0; blue, 0 }  ,draw opacity=1 ][line width=1.5]    (300.75,142.4) -- (252.29,142.4) ;
			\draw [color={rgb, 255:red, 0; green, 0; blue, 255 }  ,draw opacity=1 ][line width=1.5]    (324.54,109.03) -- (279.43,76.86) ;
			\draw    (219.43,93.86) ;
			\draw [shift={(219.43,93.86)}, rotate = 0] [color={rgb, 255:red, 0; green, 0; blue, 0 }  ][fill={rgb, 255:red, 0; green, 0; blue, 0 }  ][line width=0.75]      (0, 0) circle [x radius= 3.02, y radius= 3.02]   ;
			\draw [shift={(219.43,93.86)}, rotate = 0] [color={rgb, 255:red, 0; green, 0; blue, 0 }  ][fill={rgb, 255:red, 0; green, 0; blue, 0 }  ][line width=0.75]      (0, 0) circle [x radius= 3.02, y radius= 3.02]   ;
			\draw    (363.63,121.34) -- (364,162.32) ;
			\draw    (363.63,121.34) -- (324.54,109.03) ;
			\draw    (300.75,142.4) -- (324.54,109.03) ;
			\draw    (300.75,142.4) -- (325.14,175.34) ;
			\draw    (364,162.32) -- (325.14,175.34) ;
			\draw    (364,162.32) ;
			\draw    (325.14,175.34) ;
			\draw    (324.54,109.03) ;
			\draw    (300.75,142.4) ;
			\draw    (300.75,142.4) ;
			\draw    (325.14,175.34) ;
			\draw    (364,162.32) ;
			\draw    (324.54,109.03) ;
			\draw    (300.75,142.4) ;
			\draw    (325.14,175.34) ;
			\draw    (324.54,109.03) ;
			\draw    (363.63,121.34) ;
			\draw    (364,162.32) ;
			\draw    (300.75,142.4) ;
			\draw [line width=0.75]    (325.14,175.34) ;
			\draw [shift={(325.14,175.34)}, rotate = 0] [color={rgb, 255:red, 0; green, 0; blue, 0 }  ][fill={rgb, 255:red, 0; green, 0; blue, 0 }  ][line width=0.75]      (0, 0) circle [x radius= 3.02, y radius= 3.02]   ;
			\draw [line width=0.75]    (300.75,142.4) ;
			\draw [shift={(300.75,142.4)}, rotate = 0] [color={rgb, 255:red, 0; green, 0; blue, 0 }  ][fill={rgb, 255:red, 0; green, 0; blue, 0 }  ][line width=0.75]      (0, 0) circle [x radius= 3.02, y radius= 3.02]   ;
			\draw [line width=0.75]    (324.54,109.03) ;
			\draw [shift={(324.54,109.03)}, rotate = 0] [color={rgb, 255:red, 0; green, 0; blue, 0 }  ][fill={rgb, 255:red, 0; green, 0; blue, 0 }  ][line width=0.75]      (0, 0) circle [x radius= 3.02, y radius= 3.02]   ;
			\draw [line width=0.75]    (363.63,121.34) ;
			\draw [shift={(363.63,121.34)}, rotate = 0] [color={rgb, 255:red, 0; green, 0; blue, 0 }  ][fill={rgb, 255:red, 0; green, 0; blue, 0 }  ][line width=0.75]      (0, 0) circle [x radius= 3.02, y radius= 3.02]   ;
			\draw [line width=0.75]    (364,162.32) ;
			\draw [shift={(364,162.32)}, rotate = 0] [color={rgb, 255:red, 0; green, 0; blue, 0 }  ][fill={rgb, 255:red, 0; green, 0; blue, 0 }  ][line width=0.75]      (0, 0) circle [x radius= 3.02, y radius= 3.02]   ;
			\draw    (203.83,142.4) -- (252.29,142.4) ;
			\draw [shift={(252.29,142.4)}, rotate = 0] [color={rgb, 255:red, 0; green, 0; blue, 0 }  ][fill={rgb, 255:red, 0; green, 0; blue, 0 }  ][line width=0.75]      (0, 0) circle [x radius= 3.02, y radius= 3.02]   ;
			\draw [shift={(203.83,142.4)}, rotate = 0] [color={rgb, 255:red, 0; green, 0; blue, 0 }  ][fill={rgb, 255:red, 0; green, 0; blue, 0 }  ][line width=0.75]      (0, 0) circle [x radius= 3.02, y radius= 3.02]   ;
			\draw    (155.38,142.4) -- (203.83,142.4) ;
			\draw [shift={(203.83,142.4)}, rotate = 0] [color={rgb, 255:red, 0; green, 0; blue, 0 }  ][fill={rgb, 255:red, 0; green, 0; blue, 0 }  ][line width=0.75]      (0, 0) circle [x radius= 3.02, y radius= 3.02]   ;
			\draw [shift={(155.38,142.4)}, rotate = 0] [color={rgb, 255:red, 0; green, 0; blue, 0 }  ][fill={rgb, 255:red, 0; green, 0; blue, 0 }  ][line width=0.75]      (0, 0) circle [x radius= 3.02, y radius= 3.02]   ;
			\draw [color={rgb, 255:red, 255; green, 0; blue, 0 }  ,draw opacity=1 ][line width=1.5]    (155.38,142.4) -- (203.83,142.4) ;
			\draw [color={rgb, 255:red, 255; green, 0; blue, 0 }  ,draw opacity=1 ][line width=1.5]    (325.14,175.34) -- (364,162.32) ;
			\draw [color={rgb, 255:red, 255; green, 0; blue, 0 }  ,draw opacity=1 ][line width=1.5]    (279.43,76.86) -- (219.43,93.86) ;
			\draw [color={rgb, 255:red, 255; green, 0; blue, 0 }  ,draw opacity=1 ][line width=1.5]    (363.63,121.34) -- (324.54,109.03) ;
			\draw    (325.14,175.34) ;
			\draw [shift={(325.14,175.34)}, rotate = 0] [color={rgb, 255:red, 0; green, 0; blue, 0 }  ][fill={rgb, 255:red, 0; green, 0; blue, 0 }  ][line width=0.75]      (0, 0) circle [x radius= 3.02, y radius= 3.02]   ;
			\draw [shift={(325.14,175.34)}, rotate = 0] [color={rgb, 255:red, 0; green, 0; blue, 0 }  ][fill={rgb, 255:red, 0; green, 0; blue, 0 }  ][line width=0.75]      (0, 0) circle [x radius= 3.02, y radius= 3.02]   ;
			\draw    (364,162.32) ;
			\draw [shift={(364,162.32)}, rotate = 0] [color={rgb, 255:red, 0; green, 0; blue, 0 }  ][fill={rgb, 255:red, 0; green, 0; blue, 0 }  ][line width=0.75]      (0, 0) circle [x radius= 3.02, y radius= 3.02]   ;
			\draw [shift={(364,162.32)}, rotate = 0] [color={rgb, 255:red, 0; green, 0; blue, 0 }  ][fill={rgb, 255:red, 0; green, 0; blue, 0 }  ][line width=0.75]      (0, 0) circle [x radius= 3.02, y radius= 3.02]   ;
			\draw    (363.63,121.34) ;
			\draw [shift={(363.63,121.34)}, rotate = 0] [color={rgb, 255:red, 0; green, 0; blue, 0 }  ][fill={rgb, 255:red, 0; green, 0; blue, 0 }  ][line width=0.75]      (0, 0) circle [x radius= 3.02, y radius= 3.02]   ;
			\draw [shift={(363.63,121.34)}, rotate = 0] [color={rgb, 255:red, 0; green, 0; blue, 0 }  ][fill={rgb, 255:red, 0; green, 0; blue, 0 }  ][line width=0.75]      (0, 0) circle [x radius= 3.02, y radius= 3.02]   ;
			\draw    (324.54,109.03) ;
			\draw [shift={(324.54,109.03)}, rotate = 0] [color={rgb, 255:red, 0; green, 0; blue, 0 }  ][fill={rgb, 255:red, 0; green, 0; blue, 0 }  ][line width=0.75]      (0, 0) circle [x radius= 3.02, y radius= 3.02]   ;
			\draw [shift={(324.54,109.03)}, rotate = 0] [color={rgb, 255:red, 0; green, 0; blue, 0 }  ][fill={rgb, 255:red, 0; green, 0; blue, 0 }  ][line width=0.75]      (0, 0) circle [x radius= 3.02, y radius= 3.02]   ;
			\draw    (300.75,142.4) ;
			\draw [shift={(300.75,142.4)}, rotate = 0] [color={rgb, 255:red, 0; green, 0; blue, 0 }  ][fill={rgb, 255:red, 0; green, 0; blue, 0 }  ][line width=0.75]      (0, 0) circle [x radius= 3.02, y radius= 3.02]   ;
			\draw [shift={(300.75,142.4)}, rotate = 0] [color={rgb, 255:red, 0; green, 0; blue, 0 }  ][fill={rgb, 255:red, 0; green, 0; blue, 0 }  ][line width=0.75]      (0, 0) circle [x radius= 3.02, y radius= 3.02]   ;
			\draw    (252.29,142.4) ;
			\draw [shift={(252.29,142.4)}, rotate = 0] [color={rgb, 255:red, 0; green, 0; blue, 0 }  ][fill={rgb, 255:red, 0; green, 0; blue, 0 }  ][line width=0.75]      (0, 0) circle [x radius= 3.02, y radius= 3.02]   ;
			\draw [shift={(252.29,142.4)}, rotate = 0] [color={rgb, 255:red, 0; green, 0; blue, 0 }  ][fill={rgb, 255:red, 0; green, 0; blue, 0 }  ][line width=0.75]      (0, 0) circle [x radius= 3.02, y radius= 3.02]   ;
			\draw    (203.83,142.4) ;
			\draw [shift={(203.83,142.4)}, rotate = 0] [color={rgb, 255:red, 0; green, 0; blue, 0 }  ][fill={rgb, 255:red, 0; green, 0; blue, 0 }  ][line width=0.75]      (0, 0) circle [x radius= 3.02, y radius= 3.02]   ;
			\draw [shift={(203.83,142.4)}, rotate = 0] [color={rgb, 255:red, 0; green, 0; blue, 0 }  ][fill={rgb, 255:red, 0; green, 0; blue, 0 }  ][line width=0.75]      (0, 0) circle [x radius= 3.02, y radius= 3.02]   ;
			\draw    (155.38,142.4) ;
			\draw [shift={(155.38,142.4)}, rotate = 0] [color={rgb, 255:red, 0; green, 0; blue, 0 }  ][fill={rgb, 255:red, 0; green, 0; blue, 0 }  ][line width=0.75]      (0, 0) circle [x radius= 3.02, y radius= 3.02]   ;
			\draw [shift={(155.38,142.4)}, rotate = 0] [color={rgb, 255:red, 0; green, 0; blue, 0 }  ][fill={rgb, 255:red, 0; green, 0; blue, 0 }  ][line width=0.75]      (0, 0) circle [x radius= 3.02, y radius= 3.02]   ;
			\draw    (279.43,76.86) ;
			\draw [shift={(279.43,76.86)}, rotate = 0] [color={rgb, 255:red, 0; green, 0; blue, 0 }  ][fill={rgb, 255:red, 0; green, 0; blue, 0 }  ][line width=0.75]      (0, 0) circle [x radius= 3.02, y radius= 3.02]   ;
			\draw [shift={(279.43,76.86)}, rotate = 0] [color={rgb, 255:red, 0; green, 0; blue, 0 }  ][fill={rgb, 255:red, 0; green, 0; blue, 0 }  ][line width=0.75]      (0, 0) circle [x radius= 3.02, y radius= 3.02]   ;
			\draw  [fill={rgb, 255:red, 184; green, 233; blue, 134 }  ,fill opacity=0.15 ][dash pattern={on 4.5pt off 4.5pt}] (209.43,96.86) .. controls (217.43,78.86) and (262.43,66.86) .. (280.43,68.86) .. controls (298.43,70.86) and (320.43,98.86) .. (327.43,101.86) .. controls (334.43,104.86) and (358.43,106.86) .. (370,113.43) .. controls (381.57,120) and (379,164.43) .. (369,173.43) .. controls (359,182.43) and (333.43,187.86) .. (319.43,183.86) .. controls (305.43,179.86) and (299.86,161.29) .. (293.43,155.86) .. controls (287,150.43) and (249.43,150.86) .. (236.43,150.86) .. controls (223.43,150.86) and (215.43,158.86) .. (203.43,159.86) .. controls (191.43,160.86) and (198.43,153.86) .. (182.43,151.86) .. controls (166.43,149.86) and (144.43,156.86) .. (144.43,142.86) .. controls (144.43,128.86) and (182.43,134.86) .. (193.43,131.86) .. controls (204.43,128.86) and (201.43,114.86) .. (209.43,96.86) -- cycle ;
			\draw    (219.43,93.86) ;
			\draw [shift={(219.43,93.86)}, rotate = 0] [color={rgb, 255:red, 0; green, 0; blue, 0 }  ][fill={rgb, 255:red, 0; green, 0; blue, 0 }  ][line width=0.75]      (0, 0) circle [x radius= 3.02, y radius= 3.02]   ;
			\draw [shift={(219.43,93.86)}, rotate = 0] [color={rgb, 255:red, 0; green, 0; blue, 0 }  ][fill={rgb, 255:red, 0; green, 0; blue, 0 }  ][line width=0.75]      (0, 0) circle [x radius= 3.02, y radius= 3.02]   ;
			
			\draw (215,115.4) node [anchor=north west][inner sep=0.75pt]  [font=\small]  {$e$};
			\draw (198.54,147.43) node [anchor=north west][inner sep=0.75pt]  [font=\small]  {$v$};

		\end{tikzpicture}

	\end{center}

\caption{Case 1.3: one vertex lies on $P$ and the other on $C$.}
	\label{asdji12i3}
	
\end{figure}

\textbf{Case 2.} Assume that \(v\in V(P)\), say \(v=p_i\) for some even \(i\).
\cref{123123uyhasd} illustrates the subcase in which \(u\) also lies on \(P\).

Since \(w\geq 4\), the vertex \(q_3\) is defined and lies outside
\(V(P)\cup V(C)\). Consider the path
\[
R:=p_1,p_2,\ldots,p_i(=q_1),q_2,q_3 .
\]
The cycle \(C\) is an \(M_v\)-blossom based at \(p_1=c_1\). The path \(R\)
is \(M_v\)-alternating: its first edge \(p_1p_2\) belongs to \(M_v\), the
edge \(p_{i-1}p_i\) belongs to \(M_v\) because \(i\) is even, the edge
\(q_1q_2\) does not belong to \(M_v\), and the edge \(q_2q_3\) belongs to
\(M_v\). Hence the first and last edges of \(R\) belong to \(M_v\).

Furthermore,
\[
V(C)\cap V(R)=\{p_1\},
\]
because \(V(C)\cap V(P)=\{p_1\}\) and \(q_2,q_3\notin V(P)\cup V(C)\).
Therefore \(C\) together with \(R\) is an \(M_v\)-perfect flower containing
\(x=q_2\). This contradicts \(x\in PFF(G)\).

\begin{figure}[H]
	
	\begin{center}

		\tikzset{every picture/.style={line width=0.75pt}} 
		
		\begin{tikzpicture}[x=0.75pt,y=0.75pt,yscale=-1,xscale=1]
			
			\draw    (458.63,155.34) -- (459,196.32) ;
			\draw    (458.63,155.34) -- (419.54,143.03) ;
			\draw    (395.75,176.4) -- (419.54,143.03) ;
			\draw    (395.75,176.4) -- (420.14,209.34) ;
			\draw    (459,196.32) -- (420.14,209.34) ;
			\draw    (459,196.32) ;
			\draw    (420.14,209.34) ;
			\draw    (419.54,143.03) ;
			\draw    (395.75,176.4) ;
			\draw    (395.75,176.4) ;
			\draw    (420.14,209.34) ;
			\draw    (459,196.32) ;
			\draw    (419.54,143.03) ;
			\draw    (395.75,176.4) ;
			\draw    (420.14,209.34) ;
			\draw    (419.54,143.03) ;
			\draw    (458.63,155.34) ;
			\draw    (459,196.32) ;
			\draw    (395.75,176.4) ;
			\draw [line width=0.75]    (420.14,209.34) ;
			\draw [shift={(420.14,209.34)}, rotate = 0] [color={rgb, 255:red, 0; green, 0; blue, 0 }  ][fill={rgb, 255:red, 0; green, 0; blue, 0 }  ][line width=0.75]      (0, 0) circle [x radius= 3.02, y radius= 3.02]   ;
			\draw [line width=0.75]    (395.75,176.4) ;
			\draw [shift={(395.75,176.4)}, rotate = 0] [color={rgb, 255:red, 0; green, 0; blue, 0 }  ][fill={rgb, 255:red, 0; green, 0; blue, 0 }  ][line width=0.75]      (0, 0) circle [x radius= 3.02, y radius= 3.02]   ;
			\draw [line width=0.75]    (419.54,143.03) ;
			\draw [shift={(419.54,143.03)}, rotate = 0] [color={rgb, 255:red, 0; green, 0; blue, 0 }  ][fill={rgb, 255:red, 0; green, 0; blue, 0 }  ][line width=0.75]      (0, 0) circle [x radius= 3.02, y radius= 3.02]   ;
			\draw [line width=0.75]    (458.63,155.34) ;
			\draw [shift={(458.63,155.34)}, rotate = 0] [color={rgb, 255:red, 0; green, 0; blue, 0 }  ][fill={rgb, 255:red, 0; green, 0; blue, 0 }  ][line width=0.75]      (0, 0) circle [x radius= 3.02, y radius= 3.02]   ;
			\draw [line width=0.75]    (459,196.32) ;
			\draw [shift={(459,196.32)}, rotate = 0] [color={rgb, 255:red, 0; green, 0; blue, 0 }  ][fill={rgb, 255:red, 0; green, 0; blue, 0 }  ][line width=0.75]      (0, 0) circle [x radius= 3.02, y radius= 3.02]   ;
			\draw    (347.29,176.4) -- (395.75,176.4) ;
			\draw [shift={(395.75,176.4)}, rotate = 0] [color={rgb, 255:red, 0; green, 0; blue, 0 }  ][fill={rgb, 255:red, 0; green, 0; blue, 0 }  ][line width=0.75]      (0, 0) circle [x radius= 3.02, y radius= 3.02]   ;
			\draw [shift={(347.29,176.4)}, rotate = 0] [color={rgb, 255:red, 0; green, 0; blue, 0 }  ][fill={rgb, 255:red, 0; green, 0; blue, 0 }  ][line width=0.75]      (0, 0) circle [x radius= 3.02, y radius= 3.02]   ;
			\draw    (298.83,176.4) -- (347.29,176.4) ;
			\draw [shift={(347.29,176.4)}, rotate = 0] [color={rgb, 255:red, 0; green, 0; blue, 0 }  ][fill={rgb, 255:red, 0; green, 0; blue, 0 }  ][line width=0.75]      (0, 0) circle [x radius= 3.02, y radius= 3.02]   ;
			\draw [shift={(298.83,176.4)}, rotate = 0] [color={rgb, 255:red, 0; green, 0; blue, 0 }  ][fill={rgb, 255:red, 0; green, 0; blue, 0 }  ][line width=0.75]      (0, 0) circle [x radius= 3.02, y radius= 3.02]   ;
			\draw    (250.38,176.4) -- (298.83,176.4) ;
			\draw [shift={(298.83,176.4)}, rotate = 0] [color={rgb, 255:red, 0; green, 0; blue, 0 }  ][fill={rgb, 255:red, 0; green, 0; blue, 0 }  ][line width=0.75]      (0, 0) circle [x radius= 3.02, y radius= 3.02]   ;
			\draw [shift={(250.38,176.4)}, rotate = 0] [color={rgb, 255:red, 0; green, 0; blue, 0 }  ][fill={rgb, 255:red, 0; green, 0; blue, 0 }  ][line width=0.75]      (0, 0) circle [x radius= 3.02, y radius= 3.02]   ;
			\draw    (201.92,176.4) -- (250.38,176.4) ;
			\draw [shift={(250.38,176.4)}, rotate = 0] [color={rgb, 255:red, 0; green, 0; blue, 0 }  ][fill={rgb, 255:red, 0; green, 0; blue, 0 }  ][line width=0.75]      (0, 0) circle [x radius= 3.02, y radius= 3.02]   ;
			\draw [shift={(201.92,176.4)}, rotate = 0] [color={rgb, 255:red, 0; green, 0; blue, 0 }  ][fill={rgb, 255:red, 0; green, 0; blue, 0 }  ][line width=0.75]      (0, 0) circle [x radius= 3.02, y radius= 3.02]   ;
			\draw    (153.46,176.4) -- (173.81,176.4) -- (201.92,176.4) ;
			\draw [shift={(201.92,176.4)}, rotate = 0] [color={rgb, 255:red, 0; green, 0; blue, 0 }  ][fill={rgb, 255:red, 0; green, 0; blue, 0 }  ][line width=0.75]      (0, 0) circle [x radius= 3.02, y radius= 3.02]   ;
			\draw [shift={(153.46,176.4)}, rotate = 0] [color={rgb, 255:red, 0; green, 0; blue, 0 }  ][fill={rgb, 255:red, 0; green, 0; blue, 0 }  ][line width=0.75]      (0, 0) circle [x radius= 3.02, y radius= 3.02]   ;
			\draw    (347.38,176.4) -- (347.38,116.57) ;
			\draw [shift={(347.38,116.57)}, rotate = 270] [color={rgb, 255:red, 0; green, 0; blue, 0 }  ][fill={rgb, 255:red, 0; green, 0; blue, 0 }  ][line width=0.75]      (0, 0) circle [x radius= 3.02, y radius= 3.02]   ;
			\draw [shift={(347.38,176.4)}, rotate = 270] [color={rgb, 255:red, 0; green, 0; blue, 0 }  ][fill={rgb, 255:red, 0; green, 0; blue, 0 }  ][line width=0.75]      (0, 0) circle [x radius= 3.02, y radius= 3.02]   ;
			\draw [color={rgb, 255:red, 255; green, 0; blue, 0 }  ,draw opacity=1 ][line width=1.5]    (153.46,176.4) -- (201.92,176.4) ;
			\draw [color={rgb, 255:red, 255; green, 0; blue, 0 }  ,draw opacity=1 ][line width=1.5]    (250.38,176.4) -- (298.83,176.4) ;
			\draw [color={rgb, 255:red, 255; green, 0; blue, 0 }  ,draw opacity=1 ][line width=1.5]    (347.29,176.4) -- (395.75,176.4) ;
			\draw [color={rgb, 255:red, 255; green, 0; blue, 0 }  ,draw opacity=1 ][line width=1.5]    (419.54,143.03) -- (458.63,155.34) ;
			\draw [color={rgb, 255:red, 255; green, 0; blue, 0 }  ,draw opacity=1 ][line width=1.5]    (420.14,209.34) -- (459,196.32) ;
			\draw [color={rgb, 255:red, 0; green, 0; blue, 255 }  ,draw opacity=1 ][line width=1.5]    (347.29,176.4) -- (347.29,116.57) ;
			\draw    (153.46,176.4) ;
			\draw [shift={(153.46,176.4)}, rotate = 0] [color={rgb, 255:red, 0; green, 0; blue, 0 }  ][fill={rgb, 255:red, 0; green, 0; blue, 0 }  ][line width=0.75]      (0, 0) circle [x radius= 3.02, y radius= 3.02]   ;
			\draw [shift={(153.46,176.4)}, rotate = 0] [color={rgb, 255:red, 0; green, 0; blue, 0 }  ][fill={rgb, 255:red, 0; green, 0; blue, 0 }  ][line width=0.75]      (0, 0) circle [x radius= 3.02, y radius= 3.02]   ;
			\draw    (201.92,176.4) ;
			\draw [shift={(201.92,176.4)}, rotate = 0] [color={rgb, 255:red, 0; green, 0; blue, 0 }  ][fill={rgb, 255:red, 0; green, 0; blue, 0 }  ][line width=0.75]      (0, 0) circle [x radius= 3.02, y radius= 3.02]   ;
			\draw [shift={(201.92,176.4)}, rotate = 0] [color={rgb, 255:red, 0; green, 0; blue, 0 }  ][fill={rgb, 255:red, 0; green, 0; blue, 0 }  ][line width=0.75]      (0, 0) circle [x radius= 3.02, y radius= 3.02]   ;
			\draw    (420.14,209.34) ;
			\draw [shift={(420.14,209.34)}, rotate = 0] [color={rgb, 255:red, 0; green, 0; blue, 0 }  ][fill={rgb, 255:red, 0; green, 0; blue, 0 }  ][line width=0.75]      (0, 0) circle [x radius= 3.02, y radius= 3.02]   ;
			\draw [shift={(420.14,209.34)}, rotate = 0] [color={rgb, 255:red, 0; green, 0; blue, 0 }  ][fill={rgb, 255:red, 0; green, 0; blue, 0 }  ][line width=0.75]      (0, 0) circle [x radius= 3.02, y radius= 3.02]   ;
			\draw    (459,196.32) ;
			\draw [shift={(459,196.32)}, rotate = 0] [color={rgb, 255:red, 0; green, 0; blue, 0 }  ][fill={rgb, 255:red, 0; green, 0; blue, 0 }  ][line width=0.75]      (0, 0) circle [x radius= 3.02, y radius= 3.02]   ;
			\draw [shift={(459,196.32)}, rotate = 0] [color={rgb, 255:red, 0; green, 0; blue, 0 }  ][fill={rgb, 255:red, 0; green, 0; blue, 0 }  ][line width=0.75]      (0, 0) circle [x radius= 3.02, y radius= 3.02]   ;
			\draw    (458.63,155.34) ;
			\draw [shift={(458.63,155.34)}, rotate = 0] [color={rgb, 255:red, 0; green, 0; blue, 0 }  ][fill={rgb, 255:red, 0; green, 0; blue, 0 }  ][line width=0.75]      (0, 0) circle [x radius= 3.02, y radius= 3.02]   ;
			\draw [shift={(458.63,155.34)}, rotate = 0] [color={rgb, 255:red, 0; green, 0; blue, 0 }  ][fill={rgb, 255:red, 0; green, 0; blue, 0 }  ][line width=0.75]      (0, 0) circle [x radius= 3.02, y radius= 3.02]   ;
			\draw    (419.54,143.03) ;
			\draw [shift={(419.54,143.03)}, rotate = 0] [color={rgb, 255:red, 0; green, 0; blue, 0 }  ][fill={rgb, 255:red, 0; green, 0; blue, 0 }  ][line width=0.75]      (0, 0) circle [x radius= 3.02, y radius= 3.02]   ;
			\draw [shift={(419.54,143.03)}, rotate = 0] [color={rgb, 255:red, 0; green, 0; blue, 0 }  ][fill={rgb, 255:red, 0; green, 0; blue, 0 }  ][line width=0.75]      (0, 0) circle [x radius= 3.02, y radius= 3.02]   ;
			\draw    (395.75,176.4) ;
			\draw [shift={(395.75,176.4)}, rotate = 0] [color={rgb, 255:red, 0; green, 0; blue, 0 }  ][fill={rgb, 255:red, 0; green, 0; blue, 0 }  ][line width=0.75]      (0, 0) circle [x radius= 3.02, y radius= 3.02]   ;
			\draw [shift={(395.75,176.4)}, rotate = 0] [color={rgb, 255:red, 0; green, 0; blue, 0 }  ][fill={rgb, 255:red, 0; green, 0; blue, 0 }  ][line width=0.75]      (0, 0) circle [x radius= 3.02, y radius= 3.02]   ;
			\draw    (347.29,176.4) ;
			\draw [shift={(347.29,176.4)}, rotate = 0] [color={rgb, 255:red, 0; green, 0; blue, 0 }  ][fill={rgb, 255:red, 0; green, 0; blue, 0 }  ][line width=0.75]      (0, 0) circle [x radius= 3.02, y radius= 3.02]   ;
			\draw [shift={(347.29,176.4)}, rotate = 0] [color={rgb, 255:red, 0; green, 0; blue, 0 }  ][fill={rgb, 255:red, 0; green, 0; blue, 0 }  ][line width=0.75]      (0, 0) circle [x radius= 3.02, y radius= 3.02]   ;
			\draw    (298.83,176.4) ;
			\draw [shift={(298.83,176.4)}, rotate = 0] [color={rgb, 255:red, 0; green, 0; blue, 0 }  ][fill={rgb, 255:red, 0; green, 0; blue, 0 }  ][line width=0.75]      (0, 0) circle [x radius= 3.02, y radius= 3.02]   ;
			\draw [shift={(298.83,176.4)}, rotate = 0] [color={rgb, 255:red, 0; green, 0; blue, 0 }  ][fill={rgb, 255:red, 0; green, 0; blue, 0 }  ][line width=0.75]      (0, 0) circle [x radius= 3.02, y radius= 3.02]   ;
			\draw    (250.38,176.4) ;
			\draw [shift={(250.38,176.4)}, rotate = 0] [color={rgb, 255:red, 0; green, 0; blue, 0 }  ][fill={rgb, 255:red, 0; green, 0; blue, 0 }  ][line width=0.75]      (0, 0) circle [x radius= 3.02, y radius= 3.02]   ;
			\draw [shift={(250.38,176.4)}, rotate = 0] [color={rgb, 255:red, 0; green, 0; blue, 0 }  ][fill={rgb, 255:red, 0; green, 0; blue, 0 }  ][line width=0.75]      (0, 0) circle [x radius= 3.02, y radius= 3.02]   ;
			\draw [color={rgb, 255:red, 255; green, 0; blue, 0 }  ,draw opacity=1 ][line width=1.5]    (250.38,116.57) -- (347.29,116.57) ;
			\draw [color={rgb, 255:red, 0; green, 0; blue, 255 }  ,draw opacity=1 ][line width=1.5]    (250.38,176.4) -- (250.38,116.57) ;
			\draw    (347.38,116.57) ;
			\draw [shift={(347.38,116.57)}, rotate = 0] [color={rgb, 255:red, 0; green, 0; blue, 0 }  ][fill={rgb, 255:red, 0; green, 0; blue, 0 }  ][line width=0.75]      (0, 0) circle [x radius= 3.02, y radius= 3.02]   ;
			\draw [shift={(347.38,116.57)}, rotate = 0] [color={rgb, 255:red, 0; green, 0; blue, 0 }  ][fill={rgb, 255:red, 0; green, 0; blue, 0 }  ][line width=0.75]      (0, 0) circle [x radius= 3.02, y radius= 3.02]   ;
			\draw    (250.38,176.4) ;
			\draw [shift={(250.38,176.4)}, rotate = 0] [color={rgb, 255:red, 0; green, 0; blue, 0 }  ][fill={rgb, 255:red, 0; green, 0; blue, 0 }  ][line width=0.75]      (0, 0) circle [x radius= 3.02, y radius= 3.02]   ;
			\draw [shift={(250.38,176.4)}, rotate = 0] [color={rgb, 255:red, 0; green, 0; blue, 0 }  ][fill={rgb, 255:red, 0; green, 0; blue, 0 }  ][line width=0.75]      (0, 0) circle [x radius= 3.02, y radius= 3.02]   ;
			\draw    (250.38,116.57) ;
			\draw [shift={(250.38,116.57)}, rotate = 0] [color={rgb, 255:red, 0; green, 0; blue, 0 }  ][fill={rgb, 255:red, 0; green, 0; blue, 0 }  ][line width=0.75]      (0, 0) circle [x radius= 3.02, y radius= 3.02]   ;
			\draw [shift={(250.38,116.57)}, rotate = 0] [color={rgb, 255:red, 0; green, 0; blue, 0 }  ][fill={rgb, 255:red, 0; green, 0; blue, 0 }  ][line width=0.75]      (0, 0) circle [x radius= 3.02, y radius= 3.02]   ;
			
			\draw (245,182.4) node [anchor=north west][inner sep=0.75pt]  [font=\small]  {$u$};
			\draw (342,183.4) node [anchor=north west][inner sep=0.75pt]  [font=\small]  {$v$};

		\end{tikzpicture}

	\end{center}

\caption{Case 2: \(v\) lies on \(P\) at an even position; the drawing shows
	the subcase in which \(u\) also lies on \(P\).}
	\label{123123uyhasd}
	
\end{figure}
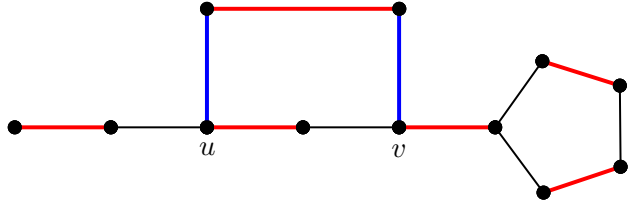

\textbf{Case 3.} Assume that $v\in V(C)-V(P)$, say $v=c_{i}$
for some $i$. According to the position of the vertex $u$, we consider
the following subcases.

$ $

\textbf{Case 3.1.} Assume that \(u\in V(P)\), say \(u=p_j\) for some odd \(j\).
Let \(Y\) be the \(p_1\)-\(v\) path in \(C\) whose last edge belongs to
\(M_v\). Such a path exists because exactly one of the two edges of \(C\)
incident with \(v\) belongs to \(M_v\). Its first edge does not belong to
\(M_v\), since both edges of \(C\) incident with \(p_1=c_1\) are outside
\(M_v\).

Let \(P[u,p_1]\) denote the \(u\)-\(p_1\) subpath of \(P\), possibly of
length zero if \(u=p_1\). Consider the cycle
\[
D:=Y\,Q\,P[u,p_1],
\]
where the paths are concatenated in the natural way. At the vertex
\(u=p_j\), the two incident edges of \(D\) do not belong to \(M_v\): one is
the last edge \(q_{w-1}q_w\) of \(Q\), and the other is either the first edge
of \(P[u,p_1]\), when \(j>1\), or the first edge of \(Y\), when \(j=1\).
All other edges of \(D\) alternate with respect to \(M_v\). Hence \(D\) is
an \(M_v\)-blossom based at \(u\), and
\[
|E(D)\cap M_v|=\frac{|V(D)|-1}{2}.
\]
The length-one path
\[
R:=u,M_v(u)
\]
has its unique edge in \(M_v\). Moreover, \(M_v(u)\notin V(D)\), and hence
\[
V(D)\cap V(R)=\{u\}.
\]
Therefore \(D\) together with \(R\) is an \(M_v\)-perfect flower containing
\(x=q_2\). This contradicts \(x\in PFF(G)\).

$ $

\textbf{Case 3.2.} Assume that \(u\in V(P)\), say \(u=p_j\) for some even \(j\),
see \cref{jasoidji123}. Consider the path
\[
R:=p_1,p_2,\ldots,p_j(=q_w),q_{w-1},q_{w-2},\ldots,q_2 .
\]
The cycle \(C\) is an \(M_v\)-blossom based at \(p_1=c_1\). Since \(j\) is
even, the edge \(p_{j-1}p_j\) belongs to \(M_v\). The next edge
\(q_wq_{w-1}\) does not belong to \(M_v\), and the reversed part of \(Q\)
alternates with respect to \(M_v\), ending with the edge
\(q_3q_2\in M_v\). Therefore \(R\) is \(M_v\)-alternating and its first and
last edges belong to \(M_v\).

Moreover,
\[
V(C)\cap V(R)=\{p_1\},
\]
because \(q_2,q_3,\ldots,q_{w-1}\) lie outside \(V(P)\cup V(C)\). Hence
\(C\) together with \(R\) is an \(M_v\)-perfect flower containing \(x=q_2\).
This contradicts \(x\in PFF(G)\).

\begin{figure}[H]
	
	\begin{center}

	\tikzset{every picture/.style={line width=0.75pt}} 
	
	\begin{tikzpicture}[x=0.75pt,y=0.75pt,yscale=-1,xscale=1]
		
		\draw    (279.63,131.34) -- (280,172.32) ;
		\draw    (279.63,131.34) -- (240.54,119.03) ;
		\draw    (216.75,152.4) -- (240.54,119.03) ;
		\draw    (216.75,152.4) -- (241.14,185.34) ;
		\draw    (280,172.32) -- (241.14,185.34) ;
		\draw    (280,172.32) ;
		\draw    (241.14,185.34) ;
		\draw    (240.54,119.03) ;
		\draw    (216.75,152.4) ;
		\draw    (216.75,152.4) ;
		\draw    (241.14,185.34) ;
		\draw    (280,172.32) ;
		\draw    (240.54,119.03) ;
		\draw    (216.75,152.4) ;
		\draw    (241.14,185.34) ;
		\draw    (240.54,119.03) ;
		\draw    (279.63,131.34) ;
		\draw    (280,172.32) ;
		\draw    (216.75,152.4) ;
		\draw [line width=0.75]    (241.14,185.34) ;
		\draw [shift={(241.14,185.34)}, rotate = 0] [color={rgb, 255:red, 0; green, 0; blue, 0 }  ][fill={rgb, 255:red, 0; green, 0; blue, 0 }  ][line width=0.75]      (0, 0) circle [x radius= 3.02, y radius= 3.02]   ;
		\draw [line width=0.75]    (216.75,152.4) ;
		\draw [shift={(216.75,152.4)}, rotate = 0] [color={rgb, 255:red, 0; green, 0; blue, 0 }  ][fill={rgb, 255:red, 0; green, 0; blue, 0 }  ][line width=0.75]      (0, 0) circle [x radius= 3.02, y radius= 3.02]   ;
		\draw [line width=0.75]    (240.54,119.03) ;
		\draw [shift={(240.54,119.03)}, rotate = 0] [color={rgb, 255:red, 0; green, 0; blue, 0 }  ][fill={rgb, 255:red, 0; green, 0; blue, 0 }  ][line width=0.75]      (0, 0) circle [x radius= 3.02, y radius= 3.02]   ;
		\draw [line width=0.75]    (279.63,131.34) ;
		\draw [shift={(279.63,131.34)}, rotate = 0] [color={rgb, 255:red, 0; green, 0; blue, 0 }  ][fill={rgb, 255:red, 0; green, 0; blue, 0 }  ][line width=0.75]      (0, 0) circle [x radius= 3.02, y radius= 3.02]   ;
		\draw [line width=0.75]    (280,172.32) ;
		\draw [shift={(280,172.32)}, rotate = 0] [color={rgb, 255:red, 0; green, 0; blue, 0 }  ][fill={rgb, 255:red, 0; green, 0; blue, 0 }  ][line width=0.75]      (0, 0) circle [x radius= 3.02, y radius= 3.02]   ;
		\draw    (168.29,152.4) -- (216.75,152.4) ;
		\draw [shift={(216.75,152.4)}, rotate = 0] [color={rgb, 255:red, 0; green, 0; blue, 0 }  ][fill={rgb, 255:red, 0; green, 0; blue, 0 }  ][line width=0.75]      (0, 0) circle [x radius= 3.02, y radius= 3.02]   ;
		\draw [shift={(168.29,152.4)}, rotate = 0] [color={rgb, 255:red, 0; green, 0; blue, 0 }  ][fill={rgb, 255:red, 0; green, 0; blue, 0 }  ][line width=0.75]      (0, 0) circle [x radius= 3.02, y radius= 3.02]   ;
		\draw    (119.83,152.4) -- (168.29,152.4) ;
		\draw [shift={(168.29,152.4)}, rotate = 0] [color={rgb, 255:red, 0; green, 0; blue, 0 }  ][fill={rgb, 255:red, 0; green, 0; blue, 0 }  ][line width=0.75]      (0, 0) circle [x radius= 3.02, y radius= 3.02]   ;
		\draw [shift={(119.83,152.4)}, rotate = 0] [color={rgb, 255:red, 0; green, 0; blue, 0 }  ][fill={rgb, 255:red, 0; green, 0; blue, 0 }  ][line width=0.75]      (0, 0) circle [x radius= 3.02, y radius= 3.02]   ;
		\draw    (71.38,152.4) -- (119.83,152.4) ;
		\draw [shift={(119.83,152.4)}, rotate = 0] [color={rgb, 255:red, 0; green, 0; blue, 0 }  ][fill={rgb, 255:red, 0; green, 0; blue, 0 }  ][line width=0.75]      (0, 0) circle [x radius= 3.02, y radius= 3.02]   ;
		\draw [shift={(71.38,152.4)}, rotate = 0] [color={rgb, 255:red, 0; green, 0; blue, 0 }  ][fill={rgb, 255:red, 0; green, 0; blue, 0 }  ][line width=0.75]      (0, 0) circle [x radius= 3.02, y radius= 3.02]   ;
		\draw    (168.29,152.4) -- (168.29,105) ;
		\draw [shift={(168.29,105)}, rotate = 270] [color={rgb, 255:red, 0; green, 0; blue, 0 }  ][fill={rgb, 255:red, 0; green, 0; blue, 0 }  ][line width=0.75]      (0, 0) circle [x radius= 3.02, y radius= 3.02]   ;
		\draw [shift={(168.29,152.4)}, rotate = 270] [color={rgb, 255:red, 0; green, 0; blue, 0 }  ][fill={rgb, 255:red, 0; green, 0; blue, 0 }  ][line width=0.75]      (0, 0) circle [x radius= 3.02, y radius= 3.02]   ;
		\draw [color={rgb, 255:red, 255; green, 0; blue, 0 }  ,draw opacity=1 ][line width=1.5]    (71.38,152.4) -- (119.83,152.4) ;
		\draw [color={rgb, 255:red, 255; green, 0; blue, 0 }  ,draw opacity=1 ][line width=1.5]    (168.29,152.4) -- (216.75,152.4) ;
		\draw [color={rgb, 255:red, 255; green, 0; blue, 0 }  ,draw opacity=1 ][line width=1.5]    (240.54,119.03) -- (279.63,131.34) ;
		\draw [color={rgb, 255:red, 255; green, 0; blue, 0 }  ,draw opacity=1 ][line width=1.5]    (241.14,185.34) -- (280,172.32) ;
		\draw [color={rgb, 255:red, 0; green, 0; blue, 255 }  ,draw opacity=1 ][line width=1.5]    (168.29,152.4) -- (168.29,105) ;
		\draw    (241.14,185.34) ;
		\draw [shift={(241.14,185.34)}, rotate = 0] [color={rgb, 255:red, 0; green, 0; blue, 0 }  ][fill={rgb, 255:red, 0; green, 0; blue, 0 }  ][line width=0.75]      (0, 0) circle [x radius= 3.02, y radius= 3.02]   ;
		\draw [shift={(241.14,185.34)}, rotate = 0] [color={rgb, 255:red, 0; green, 0; blue, 0 }  ][fill={rgb, 255:red, 0; green, 0; blue, 0 }  ][line width=0.75]      (0, 0) circle [x radius= 3.02, y radius= 3.02]   ;
		\draw    (280,172.32) ;
		\draw [shift={(280,172.32)}, rotate = 0] [color={rgb, 255:red, 0; green, 0; blue, 0 }  ][fill={rgb, 255:red, 0; green, 0; blue, 0 }  ][line width=0.75]      (0, 0) circle [x radius= 3.02, y radius= 3.02]   ;
		\draw [shift={(280,172.32)}, rotate = 0] [color={rgb, 255:red, 0; green, 0; blue, 0 }  ][fill={rgb, 255:red, 0; green, 0; blue, 0 }  ][line width=0.75]      (0, 0) circle [x radius= 3.02, y radius= 3.02]   ;
		\draw    (279.63,131.34) ;
		\draw [shift={(279.63,131.34)}, rotate = 0] [color={rgb, 255:red, 0; green, 0; blue, 0 }  ][fill={rgb, 255:red, 0; green, 0; blue, 0 }  ][line width=0.75]      (0, 0) circle [x radius= 3.02, y radius= 3.02]   ;
		\draw [shift={(279.63,131.34)}, rotate = 0] [color={rgb, 255:red, 0; green, 0; blue, 0 }  ][fill={rgb, 255:red, 0; green, 0; blue, 0 }  ][line width=0.75]      (0, 0) circle [x radius= 3.02, y radius= 3.02]   ;
		\draw    (240.54,119.03) ;
		\draw [shift={(240.54,119.03)}, rotate = 0] [color={rgb, 255:red, 0; green, 0; blue, 0 }  ][fill={rgb, 255:red, 0; green, 0; blue, 0 }  ][line width=0.75]      (0, 0) circle [x radius= 3.02, y radius= 3.02]   ;
		\draw [shift={(240.54,119.03)}, rotate = 0] [color={rgb, 255:red, 0; green, 0; blue, 0 }  ][fill={rgb, 255:red, 0; green, 0; blue, 0 }  ][line width=0.75]      (0, 0) circle [x radius= 3.02, y radius= 3.02]   ;
		\draw    (216.75,152.4) ;
		\draw [shift={(216.75,152.4)}, rotate = 0] [color={rgb, 255:red, 0; green, 0; blue, 0 }  ][fill={rgb, 255:red, 0; green, 0; blue, 0 }  ][line width=0.75]      (0, 0) circle [x radius= 3.02, y radius= 3.02]   ;
		\draw [shift={(216.75,152.4)}, rotate = 0] [color={rgb, 255:red, 0; green, 0; blue, 0 }  ][fill={rgb, 255:red, 0; green, 0; blue, 0 }  ][line width=0.75]      (0, 0) circle [x radius= 3.02, y radius= 3.02]   ;
		\draw    (168.29,152.4) ;
		\draw [shift={(168.29,152.4)}, rotate = 0] [color={rgb, 255:red, 0; green, 0; blue, 0 }  ][fill={rgb, 255:red, 0; green, 0; blue, 0 }  ][line width=0.75]      (0, 0) circle [x radius= 3.02, y radius= 3.02]   ;
		\draw [shift={(168.29,152.4)}, rotate = 0] [color={rgb, 255:red, 0; green, 0; blue, 0 }  ][fill={rgb, 255:red, 0; green, 0; blue, 0 }  ][line width=0.75]      (0, 0) circle [x radius= 3.02, y radius= 3.02]   ;
		\draw    (119.83,152.4) ;
		\draw [shift={(119.83,152.4)}, rotate = 0] [color={rgb, 255:red, 0; green, 0; blue, 0 }  ][fill={rgb, 255:red, 0; green, 0; blue, 0 }  ][line width=0.75]      (0, 0) circle [x radius= 3.02, y radius= 3.02]   ;
		\draw [shift={(119.83,152.4)}, rotate = 0] [color={rgb, 255:red, 0; green, 0; blue, 0 }  ][fill={rgb, 255:red, 0; green, 0; blue, 0 }  ][line width=0.75]      (0, 0) circle [x radius= 3.02, y radius= 3.02]   ;
		\draw    (71.38,152.4) ;
		\draw [shift={(71.38,152.4)}, rotate = 0] [color={rgb, 255:red, 0; green, 0; blue, 0 }  ][fill={rgb, 255:red, 0; green, 0; blue, 0 }  ][line width=0.75]      (0, 0) circle [x radius= 3.02, y radius= 3.02]   ;
		\draw [shift={(71.38,152.4)}, rotate = 0] [color={rgb, 255:red, 0; green, 0; blue, 0 }  ][fill={rgb, 255:red, 0; green, 0; blue, 0 }  ][line width=0.75]      (0, 0) circle [x radius= 3.02, y radius= 3.02]   ;
		\draw [color={rgb, 255:red, 255; green, 0; blue, 0 }  ,draw opacity=1 ][line width=1.5]    (168.29,105) -- (224.91,78.86) ;
		\draw [color={rgb, 255:red, 0; green, 0; blue, 255 }  ,draw opacity=1 ][line width=1.5]    (224.91,78.86) -- (240.54,119.03) ;
		\draw    (240.54,119.03) ;
		\draw [shift={(240.54,119.03)}, rotate = 0] [color={rgb, 255:red, 0; green, 0; blue, 0 }  ][fill={rgb, 255:red, 0; green, 0; blue, 0 }  ][line width=0.75]      (0, 0) circle [x radius= 3.02, y radius= 3.02]   ;
		\draw [shift={(240.54,119.03)}, rotate = 0] [color={rgb, 255:red, 0; green, 0; blue, 0 }  ][fill={rgb, 255:red, 0; green, 0; blue, 0 }  ][line width=0.75]      (0, 0) circle [x radius= 3.02, y radius= 3.02]   ;
		\draw    (224.91,78.86) ;
		\draw [shift={(224.91,78.86)}, rotate = 0] [color={rgb, 255:red, 0; green, 0; blue, 0 }  ][fill={rgb, 255:red, 0; green, 0; blue, 0 }  ][line width=0.75]      (0, 0) circle [x radius= 3.02, y radius= 3.02]   ;
		\draw [shift={(224.91,78.86)}, rotate = 0] [color={rgb, 255:red, 0; green, 0; blue, 0 }  ][fill={rgb, 255:red, 0; green, 0; blue, 0 }  ][line width=0.75]      (0, 0) circle [x radius= 3.02, y radius= 3.02]   ;
		\draw    (168.29,105) ;
		\draw [shift={(168.29,105)}, rotate = 0] [color={rgb, 255:red, 0; green, 0; blue, 0 }  ][fill={rgb, 255:red, 0; green, 0; blue, 0 }  ][line width=0.75]      (0, 0) circle [x radius= 3.02, y radius= 3.02]   ;
		\draw [shift={(168.29,105)}, rotate = 0] [color={rgb, 255:red, 0; green, 0; blue, 0 }  ][fill={rgb, 255:red, 0; green, 0; blue, 0 }  ][line width=0.75]      (0, 0) circle [x radius= 3.02, y radius= 3.02]   ;
		\draw  [fill={rgb, 255:red, 184; green, 233; blue, 134 }  ,fill opacity=0.15 ][dash pattern={on 4.5pt off 4.5pt}] (151,105.14) .. controls (157,89.14) and (227,56.86) .. (236,74.86) .. controls (245,92.86) and (201,97.86) .. (189,106.86) .. controls (177,115.86) and (182,122.14) .. (184,133.14) .. controls (186,144.14) and (206,138.14) .. (219,120.14) .. controls (232,102.14) and (283,105.14) .. (291,129.14) .. controls (299,153.14) and (297,184.14) .. (266,195.14) .. controls (235,206.14) and (214,194.14) .. (210,173.14) .. controls (206,152.14) and (168.43,168.43) .. (159.43,160.43) .. controls (150.43,152.43) and (145,121.14) .. (151,105.14) -- cycle ;
		
		\draw (238.54,126.43) node [anchor=north west][inner sep=0.75pt]  [font=\small]  {$v$};
		\draw (236,94.4) node [anchor=north west][inner sep=0.75pt]  [font=\small]  {$e$};

	\end{tikzpicture}

	\end{center}

\caption{Case 3.2: one vertex lies on $C-V(P)$ and the other on $P$.}
	\label{jasoidji123}
	
\end{figure}
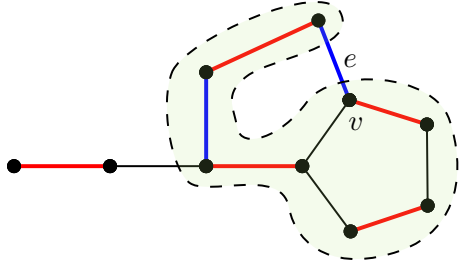

\textbf{Case 3.3.} Assume that \(u\in V(C)\setminus V(P)\). Then both vertices
\(u\) and \(v\) lie in \(V(C)\setminus V(P)\). Write
\[
v=c_i,\qquad u=c_j .
\]
By reversing the cyclic orientation of \(C\), if necessary, we may assume
that
\[
2\leq i<j\leq 2r+1.
\]
This does not interchange \(u\) and \(v\); it only fixes the notation along
the cycle \(C\). Let
\[
X:=c_i,c_{i+1},\ldots,c_j .
\]
Since \(X\) does not contain the base \(c_1\), it is an \(M_v\)-alternating
path. We consider the four possibilities determined by the first and last
edges of \(X\).

$ $

\textbf{Case 3.3.1.}  If the first and last edges of \(X\) belong to \(M_v\), then
\[
C^*:=q_1,q_2,\ldots,q_w(=c_j),c_{j-1},c_{j-2},\ldots,c_i(=q_1)
\]
is an even \(M_v\)-alternating cycle, see \cref{asdioj12ij3}. Therefore
\[
M:=M_v\triangle E(C^*)
\]
is again a perfect matching of \(G\).

Now consider the cycle
\[
D:=c_1,c_2,\ldots,c_i(=q_1),q_2,\ldots,q_w(=c_j),
c_{j+1},\ldots,c_{2r+1},c_1,
\]
where the segment \(c_{j+1},\ldots,c_{2r+1}\) is omitted if \(j=2r+1\).
After rotating the matching along \(C^*\), the cycle \(D\) is an
\(M\)-blossom based at \(c_1=p_1\). Indeed, the edges of \(Q\) have been
switched, the edges of \(C\setminus X\) have not been switched, and the only
two consecutive non-\(M\) edges on \(D\) are the two edges incident with
\(c_1\). Hence
\[
|E(D)\cap M|=\frac{|V(D)|-1}{2}.
\]
The length-one path
\[
R:=p_1,p_2
\]
has its unique edge in \(M\), and
\[
V(D)\cap V(R)=\{p_1\}.
\]
Therefore \(D\) together with \(R\) is an \(M\)-perfect flower containing
\(x=q_2\). This contradicts \(x\in PFF(G)\).

\begin{figure}[H]
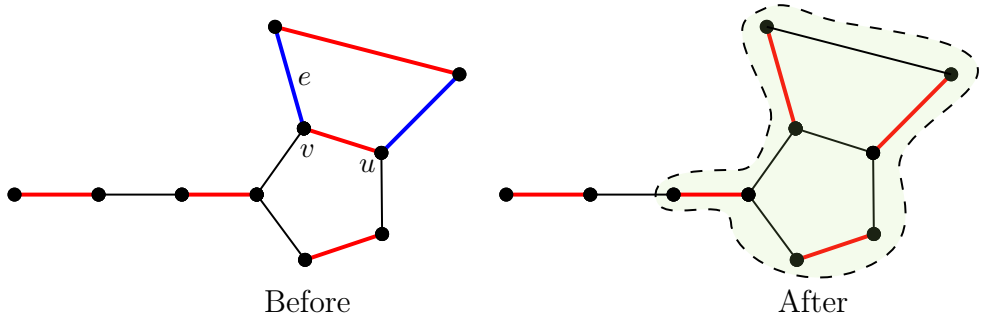

	
	\begin{center}

		\tikzset{every picture/.style={line width=0.75pt}} 
		


	\end{center}

\caption{Case 3.3.1: rotating the matching on the alternating cycle.}
	\label{asdioj12ij3}
	
\end{figure}

\textbf{Case 3.3.2.} If neither the first nor the last edge of \(X\) belongs to
\(M_v\), then consider the cycle
\[
D:=c_1,c_2,\ldots,c_i(=q_1),q_2,\ldots,q_w(=c_j),
c_{j+1},\ldots,c_{2r+1},c_1,
\]
where the segment \(c_{j+1},\ldots,c_{2r+1}\) is omitted if \(j=2r+1\).
The cycle \(D\) is an \(M_v\)-blossom based at \(c_1=p_1\), see \cref{asdoi1h23uoi}.
Indeed, the two edges of \(D\) incident with \(c_1\) do not belong to
\(M_v\), while all remaining edges alternate with respect to \(M_v\). Hence
\[
|E(D)\cap M_v|=\frac{|V(D)|-1}{2}.
\]
The length-one path
\[
R:=p_1,p_2
\]
has its unique edge in \(M_v\), and
\[
V(D)\cap V(R)=\{p_1\}.
\]
Therefore \(D\) together with \(R\) is an \(M_v\)-perfect flower containing
\(x=q_2\). This contradicts \(x\in PFF(G)\).

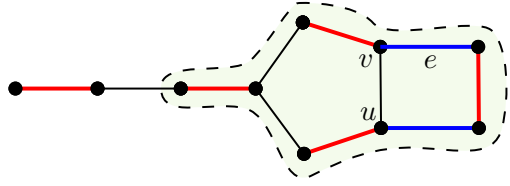
\begin{figure}[H]
	
	\begin{center}

		\tikzset{every picture/.style={line width=0.75pt}} 
		
		\begin{tikzpicture}[x=0.75pt,y=0.75pt,yscale=-1,xscale=1]
			
			\draw  [fill={rgb, 255:red, 184; green, 233; blue, 134 }  ,fill opacity=0.15 ][dash pattern={on 4.5pt off 4.5pt}] (309,128.29) .. controls (313,127.79) and (338,128.29) .. (347,138.29) .. controls (356,148.29) and (399,134.29) .. (407,142.29) .. controls (415,150.29) and (416,175.29) .. (412,191.29) .. controls (408,207.29) and (390,202.29) .. (369,201.29) .. controls (348,200.29) and (332,207.29) .. (316,215.29) .. controls (300,223.29) and (299,185.29) .. (282,181.29) .. controls (265,177.29) and (241,187.71) .. (240,171.71) .. controls (239,155.71) and (276,165.29) .. (285,157.29) .. controls (294,149.29) and (305,128.79) .. (309,128.29) -- cycle ;
			\draw    (350.63,150.34) -- (351,191.32) ;
			\draw    (350.63,150.34) -- (311.54,138.03) ;
			\draw    (287.75,171.4) -- (311.54,138.03) ;
			\draw    (287.75,171.4) -- (312.14,204.34) ;
			\draw    (351,191.32) -- (312.14,204.34) ;
			\draw    (351,191.32) ;
			\draw    (312.14,204.34) ;
			\draw    (311.54,138.03) ;
			\draw    (287.75,171.4) ;
			\draw    (287.75,171.4) ;
			\draw    (312.14,204.34) ;
			\draw    (351,191.32) ;
			\draw    (311.54,138.03) ;
			\draw    (287.75,171.4) ;
			\draw    (312.14,204.34) ;
			\draw    (311.54,138.03) ;
			\draw    (350.63,150.34) ;
			\draw    (351,191.32) ;
			\draw    (287.75,171.4) ;
			\draw [line width=0.75]    (312.14,204.34) ;
			\draw [shift={(312.14,204.34)}, rotate = 0] [color={rgb, 255:red, 0; green, 0; blue, 0 }  ][fill={rgb, 255:red, 0; green, 0; blue, 0 }  ][line width=0.75]      (0, 0) circle [x radius= 3.02, y radius= 3.02]   ;
			\draw [line width=0.75]    (287.75,171.4) ;
			\draw [shift={(287.75,171.4)}, rotate = 0] [color={rgb, 255:red, 0; green, 0; blue, 0 }  ][fill={rgb, 255:red, 0; green, 0; blue, 0 }  ][line width=0.75]      (0, 0) circle [x radius= 3.02, y radius= 3.02]   ;
			\draw [line width=0.75]    (311.54,138.03) ;
			\draw [shift={(311.54,138.03)}, rotate = 0] [color={rgb, 255:red, 0; green, 0; blue, 0 }  ][fill={rgb, 255:red, 0; green, 0; blue, 0 }  ][line width=0.75]      (0, 0) circle [x radius= 3.02, y radius= 3.02]   ;
			\draw [line width=0.75]    (350.63,150.34) ;
			\draw [shift={(350.63,150.34)}, rotate = 0] [color={rgb, 255:red, 0; green, 0; blue, 0 }  ][fill={rgb, 255:red, 0; green, 0; blue, 0 }  ][line width=0.75]      (0, 0) circle [x radius= 3.02, y radius= 3.02]   ;
			\draw [line width=0.75]    (351,191.32) ;
			\draw [shift={(351,191.32)}, rotate = 0] [color={rgb, 255:red, 0; green, 0; blue, 0 }  ][fill={rgb, 255:red, 0; green, 0; blue, 0 }  ][line width=0.75]      (0, 0) circle [x radius= 3.02, y radius= 3.02]   ;
			\draw    (208,171.4) -- (250,171.4) ;
			\draw [shift={(250,171.4)}, rotate = 0] [color={rgb, 255:red, 0; green, 0; blue, 0 }  ][fill={rgb, 255:red, 0; green, 0; blue, 0 }  ][line width=0.75]      (0, 0) circle [x radius= 3.02, y radius= 3.02]   ;
			\draw [shift={(208,171.4)}, rotate = 0] [color={rgb, 255:red, 0; green, 0; blue, 0 }  ][fill={rgb, 255:red, 0; green, 0; blue, 0 }  ][line width=0.75]      (0, 0) circle [x radius= 3.02, y radius= 3.02]   ;
			\draw [color={rgb, 255:red, 255; green, 0; blue, 0 }  ,draw opacity=1 ][line width=1.5]    (166.6,171.4) -- (208,171.4) ;
			\draw [color={rgb, 255:red, 255; green, 0; blue, 0 }  ,draw opacity=1 ][line width=1.5]    (250,171.4) -- (287.75,171.4) ;
			\draw [color={rgb, 255:red, 255; green, 0; blue, 0 }  ,draw opacity=1 ][line width=1.5]    (311.54,138.03) -- (350.63,150.34) ;
			\draw [color={rgb, 255:red, 255; green, 0; blue, 0 }  ,draw opacity=1 ][line width=1.5]    (312.14,204.34) -- (351,191.32) ;
			\draw [color={rgb, 255:red, 0; green, 0; blue, 255 }  ,draw opacity=1 ][line width=1.5]    (351,191.32) -- (400.37,191.32) ;
			\draw    (312.14,204.34) ;
			\draw [shift={(312.14,204.34)}, rotate = 0] [color={rgb, 255:red, 0; green, 0; blue, 0 }  ][fill={rgb, 255:red, 0; green, 0; blue, 0 }  ][line width=0.75]      (0, 0) circle [x radius= 3.02, y radius= 3.02]   ;
			\draw [shift={(312.14,204.34)}, rotate = 0] [color={rgb, 255:red, 0; green, 0; blue, 0 }  ][fill={rgb, 255:red, 0; green, 0; blue, 0 }  ][line width=0.75]      (0, 0) circle [x radius= 3.02, y radius= 3.02]   ;
			\draw    (351,191.32) ;
			\draw [shift={(351,191.32)}, rotate = 0] [color={rgb, 255:red, 0; green, 0; blue, 0 }  ][fill={rgb, 255:red, 0; green, 0; blue, 0 }  ][line width=0.75]      (0, 0) circle [x radius= 3.02, y radius= 3.02]   ;
			\draw [shift={(351,191.32)}, rotate = 0] [color={rgb, 255:red, 0; green, 0; blue, 0 }  ][fill={rgb, 255:red, 0; green, 0; blue, 0 }  ][line width=0.75]      (0, 0) circle [x radius= 3.02, y radius= 3.02]   ;
			\draw    (350.63,150.34) ;
			\draw [shift={(350.63,150.34)}, rotate = 0] [color={rgb, 255:red, 0; green, 0; blue, 0 }  ][fill={rgb, 255:red, 0; green, 0; blue, 0 }  ][line width=0.75]      (0, 0) circle [x radius= 3.02, y radius= 3.02]   ;
			\draw [shift={(350.63,150.34)}, rotate = 0] [color={rgb, 255:red, 0; green, 0; blue, 0 }  ][fill={rgb, 255:red, 0; green, 0; blue, 0 }  ][line width=0.75]      (0, 0) circle [x radius= 3.02, y radius= 3.02]   ;
			\draw    (311.54,138.03) ;
			\draw [shift={(311.54,138.03)}, rotate = 0] [color={rgb, 255:red, 0; green, 0; blue, 0 }  ][fill={rgb, 255:red, 0; green, 0; blue, 0 }  ][line width=0.75]      (0, 0) circle [x radius= 3.02, y radius= 3.02]   ;
			\draw [shift={(311.54,138.03)}, rotate = 0] [color={rgb, 255:red, 0; green, 0; blue, 0 }  ][fill={rgb, 255:red, 0; green, 0; blue, 0 }  ][line width=0.75]      (0, 0) circle [x radius= 3.02, y radius= 3.02]   ;
			\draw    (287.75,171.4) ;
			\draw [shift={(287.75,171.4)}, rotate = 0] [color={rgb, 255:red, 0; green, 0; blue, 0 }  ][fill={rgb, 255:red, 0; green, 0; blue, 0 }  ][line width=0.75]      (0, 0) circle [x radius= 3.02, y radius= 3.02]   ;
			\draw [shift={(287.75,171.4)}, rotate = 0] [color={rgb, 255:red, 0; green, 0; blue, 0 }  ][fill={rgb, 255:red, 0; green, 0; blue, 0 }  ][line width=0.75]      (0, 0) circle [x radius= 3.02, y radius= 3.02]   ;
			\draw    (250,171.4) ;
			\draw [shift={(250,171.4)}, rotate = 0] [color={rgb, 255:red, 0; green, 0; blue, 0 }  ][fill={rgb, 255:red, 0; green, 0; blue, 0 }  ][line width=0.75]      (0, 0) circle [x radius= 3.02, y radius= 3.02]   ;
			\draw [shift={(250,171.4)}, rotate = 0] [color={rgb, 255:red, 0; green, 0; blue, 0 }  ][fill={rgb, 255:red, 0; green, 0; blue, 0 }  ][line width=0.75]      (0, 0) circle [x radius= 3.02, y radius= 3.02]   ;
			\draw    (166.6,171.4) ;
			\draw [shift={(166.6,171.4)}, rotate = 0] [color={rgb, 255:red, 0; green, 0; blue, 0 }  ][fill={rgb, 255:red, 0; green, 0; blue, 0 }  ][line width=0.75]      (0, 0) circle [x radius= 3.02, y radius= 3.02]   ;
			\draw [shift={(166.6,171.4)}, rotate = 0] [color={rgb, 255:red, 0; green, 0; blue, 0 }  ][fill={rgb, 255:red, 0; green, 0; blue, 0 }  ][line width=0.75]      (0, 0) circle [x radius= 3.02, y radius= 3.02]   ;
			\draw    (208,171.4) ;
			\draw [shift={(208,171.4)}, rotate = 0] [color={rgb, 255:red, 0; green, 0; blue, 0 }  ][fill={rgb, 255:red, 0; green, 0; blue, 0 }  ][line width=0.75]      (0, 0) circle [x radius= 3.02, y radius= 3.02]   ;
			\draw [shift={(208,171.4)}, rotate = 0] [color={rgb, 255:red, 0; green, 0; blue, 0 }  ][fill={rgb, 255:red, 0; green, 0; blue, 0 }  ][line width=0.75]      (0, 0) circle [x radius= 3.02, y radius= 3.02]   ;
			\draw [color={rgb, 255:red, 255; green, 0; blue, 0 }  ,draw opacity=1 ][line width=1.5]    (400,150.34) -- (400.37,191.32) ;
			\draw [color={rgb, 255:red, 0; green, 0; blue, 255 }  ,draw opacity=1 ][line width=1.5]    (400,150.34) -- (350.63,150.34) ;
			\draw    (311.54,138.03) ;
			\draw [shift={(311.54,138.03)}, rotate = 0] [color={rgb, 255:red, 0; green, 0; blue, 0 }  ][fill={rgb, 255:red, 0; green, 0; blue, 0 }  ][line width=0.75]      (0, 0) circle [x radius= 3.02, y radius= 3.02]   ;
			\draw [shift={(311.54,138.03)}, rotate = 0] [color={rgb, 255:red, 0; green, 0; blue, 0 }  ][fill={rgb, 255:red, 0; green, 0; blue, 0 }  ][line width=0.75]      (0, 0) circle [x radius= 3.02, y radius= 3.02]   ;
			\draw    (400,150.34) ;
			\draw [shift={(400,150.34)}, rotate = 0] [color={rgb, 255:red, 0; green, 0; blue, 0 }  ][fill={rgb, 255:red, 0; green, 0; blue, 0 }  ][line width=0.75]      (0, 0) circle [x radius= 3.02, y radius= 3.02]   ;
			\draw [shift={(400,150.34)}, rotate = 0] [color={rgb, 255:red, 0; green, 0; blue, 0 }  ][fill={rgb, 255:red, 0; green, 0; blue, 0 }  ][line width=0.75]      (0, 0) circle [x radius= 3.02, y radius= 3.02]   ;
			\draw    (400.37,191.32) ;
			\draw [shift={(400.37,191.32)}, rotate = 0] [color={rgb, 255:red, 0; green, 0; blue, 0 }  ][fill={rgb, 255:red, 0; green, 0; blue, 0 }  ][line width=0.75]      (0, 0) circle [x radius= 3.02, y radius= 3.02]   ;
			\draw [shift={(400.37,191.32)}, rotate = 0] [color={rgb, 255:red, 0; green, 0; blue, 0 }  ][fill={rgb, 255:red, 0; green, 0; blue, 0 }  ][line width=0.75]      (0, 0) circle [x radius= 3.02, y radius= 3.02]   ;
			
			\draw (338.08,152.59) node [anchor=north west][inner sep=0.75pt]  [font=\small]  {$v$};
			\draw (371,153.4) node [anchor=north west][inner sep=0.75pt]  [font=\small]  {$e$};
			\draw (339,179.4) node [anchor=north west][inner sep=0.75pt]  [font=\small]  {$u$};

		\end{tikzpicture}

	\end{center}

\caption{Case 3.3.2: both end edges of $X$ are outside $M_v$.}
	\label{asdoi1h23uoi}
	
\end{figure}

\textbf{Case 3.3.3.} Assume that the first edge of \(X\) belongs to \(M_v\), and
the last edge of \(X\) does not belong to \(M_v\). Consider the cycle
\[
D:=q_1,q_2,\ldots,q_w(=c_j),c_{j-1},c_{j-2},\ldots,c_i(=q_1).
\]
Then \(D\) is an \(M_v\)-blossom based at \(u=c_j\), see \cref{asd123iu132i}. Indeed,
the last edge \(q_{w-1}q_w\) of \(Q\) and the last edge \(c_{j-1}c_j\) of
\(X\) both lie outside \(M_v\), while all remaining edges of \(D\) alternate
with respect to \(M_v\). Hence
\[
|E(D)\cap M_v|=\frac{|V(D)|-1}{2}.
\]
Since the last edge of \(X\) does not belong to \(M_v\), the edge
\(c_jc_{j+1}=uM_v(u)\) belongs to \(M_v\). Thus the length-one path
\[
R:=c_j,c_{j+1}
\]
has its unique edge in \(M_v\), and
\[
V(D)\cap V(R)=\{c_j\}.
\]
Therefore \(D\) together with \(R\) is an \(M_v\)-perfect flower containing
\(x=q_2\). This contradicts \(x\in PFF(G)\).

Case 3.3.4. Assume that the first edge of \(X\) does not belong to \(M_v\),
and the last edge of \(X\) belongs to \(M_v\). Consider again the cycle
\[
D:=q_1,q_2,\ldots,q_w(=c_j),c_{j-1},c_{j-2},\ldots,c_i(=q_1).
\]
Then \(D\) is an \(M_v\)-blossom based at \(v=c_i\). Indeed, the first edge
\(q_1q_2\) of \(Q\) and the first edge \(c_ic_{i+1}\) of \(X\) both lie
outside \(M_v\), while all remaining edges of \(D\) alternate with respect
to \(M_v\). Hence
\[
|E(D)\cap M_v|=\frac{|V(D)|-1}{2}.
\]
Since the first edge of \(X\) does not belong to \(M_v\), the edge
\(c_ic_{i-1}=vM_v(v)\) belongs to \(M_v\). Thus the length-one path
\[
R:=c_i,c_{i-1}
\]
has its unique edge in \(M_v\), and
\[
V(D)\cap V(R)=\{c_i\}.
\]
Therefore \(D\) together with \(R\) is an \(M_v\)-perfect flower containing
\(x=q_2\). This contradicts \(x\in PFF(G)\).

\begin{figure}[H]
	
	\begin{center}

		\tikzset{every picture/.style={line width=0.75pt}} 
		
		\begin{tikzpicture}[x=0.75pt,y=0.75pt,yscale=-1,xscale=1]
			
			\draw  [fill={rgb, 255:red, 184; green, 233; blue, 134 }  ,fill opacity=0.15 ][dash pattern={on 4.5pt off 4.5pt}] (339,185.71) .. controls (348,180.71) and (343,174.71) .. (340,163.71) .. controls (337,152.71) and (297.63,151.07) .. (300,139.71) .. controls (302.37,128.36) and (318,116.71) .. (331,106.71) .. controls (344,96.71) and (372,81.71) .. (380,93.71) .. controls (388,105.71) and (404,143.71) .. (403,153.71) .. controls (402,163.71) and (362,200.71) .. (347,206.71) .. controls (332,212.71) and (305,219.71) .. (304,203.71) .. controls (303,187.71) and (330,190.71) .. (339,185.71) -- cycle ;
			\draw    (350.63,150.34) -- (351,191.32) ;
			\draw    (350.63,150.34) -- (311.54,138.03) ;
			\draw    (287.75,171.4) -- (311.54,138.03) ;
			\draw    (287.75,171.4) -- (312.14,204.34) ;
			\draw    (351,191.32) -- (312.14,204.34) ;
			\draw    (351,191.32) ;
			\draw    (312.14,204.34) ;
			\draw    (311.54,138.03) ;
			\draw    (287.75,171.4) ;
			\draw    (287.75,171.4) ;
			\draw    (312.14,204.34) ;
			\draw    (351,191.32) ;
			\draw    (311.54,138.03) ;
			\draw    (287.75,171.4) ;
			\draw    (312.14,204.34) ;
			\draw    (311.54,138.03) ;
			\draw    (350.63,150.34) ;
			\draw    (351,191.32) ;
			\draw    (287.75,171.4) ;
			\draw [line width=0.75]    (312.14,204.34) ;
			\draw [shift={(312.14,204.34)}, rotate = 0] [color={rgb, 255:red, 0; green, 0; blue, 0 }  ][fill={rgb, 255:red, 0; green, 0; blue, 0 }  ][line width=0.75]      (0, 0) circle [x radius= 3.02, y radius= 3.02]   ;
			\draw [line width=0.75]    (287.75,171.4) ;
			\draw [shift={(287.75,171.4)}, rotate = 0] [color={rgb, 255:red, 0; green, 0; blue, 0 }  ][fill={rgb, 255:red, 0; green, 0; blue, 0 }  ][line width=0.75]      (0, 0) circle [x radius= 3.02, y radius= 3.02]   ;
			\draw [line width=0.75]    (311.54,138.03) ;
			\draw [shift={(311.54,138.03)}, rotate = 0] [color={rgb, 255:red, 0; green, 0; blue, 0 }  ][fill={rgb, 255:red, 0; green, 0; blue, 0 }  ][line width=0.75]      (0, 0) circle [x radius= 3.02, y radius= 3.02]   ;
			\draw [line width=0.75]    (350.63,150.34) ;
			\draw [shift={(350.63,150.34)}, rotate = 0] [color={rgb, 255:red, 0; green, 0; blue, 0 }  ][fill={rgb, 255:red, 0; green, 0; blue, 0 }  ][line width=0.75]      (0, 0) circle [x radius= 3.02, y radius= 3.02]   ;
			\draw [line width=0.75]    (351,191.32) ;
			\draw [shift={(351,191.32)}, rotate = 0] [color={rgb, 255:red, 0; green, 0; blue, 0 }  ][fill={rgb, 255:red, 0; green, 0; blue, 0 }  ][line width=0.75]      (0, 0) circle [x radius= 3.02, y radius= 3.02]   ;
			\draw    (208,171.4) -- (250,171.4) ;
			\draw [shift={(250,171.4)}, rotate = 0] [color={rgb, 255:red, 0; green, 0; blue, 0 }  ][fill={rgb, 255:red, 0; green, 0; blue, 0 }  ][line width=0.75]      (0, 0) circle [x radius= 3.02, y radius= 3.02]   ;
			\draw [shift={(208,171.4)}, rotate = 0] [color={rgb, 255:red, 0; green, 0; blue, 0 }  ][fill={rgb, 255:red, 0; green, 0; blue, 0 }  ][line width=0.75]      (0, 0) circle [x radius= 3.02, y radius= 3.02]   ;
			\draw [color={rgb, 255:red, 255; green, 0; blue, 0 }  ,draw opacity=1 ][line width=1.5]    (166.6,171.4) -- (208,171.4) ;
			\draw [color={rgb, 255:red, 255; green, 0; blue, 0 }  ,draw opacity=1 ][line width=1.5]    (250,171.4) -- (287.75,171.4) ;
			\draw [color={rgb, 255:red, 255; green, 0; blue, 0 }  ,draw opacity=1 ][line width=1.5]    (311.54,138.03) -- (350.63,150.34) ;
			\draw [color={rgb, 255:red, 255; green, 0; blue, 0 }  ,draw opacity=1 ][line width=1.5]    (312.14,204.34) -- (351,191.32) ;
			\draw [color={rgb, 255:red, 0; green, 0; blue, 255 }  ,draw opacity=1 ][line width=1.5]    (351,191.32) -- (390.37,151.7) ;
			\draw    (312.14,204.34) ;
			\draw [shift={(312.14,204.34)}, rotate = 0] [color={rgb, 255:red, 0; green, 0; blue, 0 }  ][fill={rgb, 255:red, 0; green, 0; blue, 0 }  ][line width=0.75]      (0, 0) circle [x radius= 3.02, y radius= 3.02]   ;
			\draw [shift={(312.14,204.34)}, rotate = 0] [color={rgb, 255:red, 0; green, 0; blue, 0 }  ][fill={rgb, 255:red, 0; green, 0; blue, 0 }  ][line width=0.75]      (0, 0) circle [x radius= 3.02, y radius= 3.02]   ;
			\draw    (351,191.32) ;
			\draw [shift={(351,191.32)}, rotate = 0] [color={rgb, 255:red, 0; green, 0; blue, 0 }  ][fill={rgb, 255:red, 0; green, 0; blue, 0 }  ][line width=0.75]      (0, 0) circle [x radius= 3.02, y radius= 3.02]   ;
			\draw [shift={(351,191.32)}, rotate = 0] [color={rgb, 255:red, 0; green, 0; blue, 0 }  ][fill={rgb, 255:red, 0; green, 0; blue, 0 }  ][line width=0.75]      (0, 0) circle [x radius= 3.02, y radius= 3.02]   ;
			\draw    (350.63,150.34) ;
			\draw [shift={(350.63,150.34)}, rotate = 0] [color={rgb, 255:red, 0; green, 0; blue, 0 }  ][fill={rgb, 255:red, 0; green, 0; blue, 0 }  ][line width=0.75]      (0, 0) circle [x radius= 3.02, y radius= 3.02]   ;
			\draw [shift={(350.63,150.34)}, rotate = 0] [color={rgb, 255:red, 0; green, 0; blue, 0 }  ][fill={rgb, 255:red, 0; green, 0; blue, 0 }  ][line width=0.75]      (0, 0) circle [x radius= 3.02, y radius= 3.02]   ;
			\draw    (311.54,138.03) ;
			\draw [shift={(311.54,138.03)}, rotate = 0] [color={rgb, 255:red, 0; green, 0; blue, 0 }  ][fill={rgb, 255:red, 0; green, 0; blue, 0 }  ][line width=0.75]      (0, 0) circle [x radius= 3.02, y radius= 3.02]   ;
			\draw [shift={(311.54,138.03)}, rotate = 0] [color={rgb, 255:red, 0; green, 0; blue, 0 }  ][fill={rgb, 255:red, 0; green, 0; blue, 0 }  ][line width=0.75]      (0, 0) circle [x radius= 3.02, y radius= 3.02]   ;
			\draw    (287.75,171.4) ;
			\draw [shift={(287.75,171.4)}, rotate = 0] [color={rgb, 255:red, 0; green, 0; blue, 0 }  ][fill={rgb, 255:red, 0; green, 0; blue, 0 }  ][line width=0.75]      (0, 0) circle [x radius= 3.02, y radius= 3.02]   ;
			\draw [shift={(287.75,171.4)}, rotate = 0] [color={rgb, 255:red, 0; green, 0; blue, 0 }  ][fill={rgb, 255:red, 0; green, 0; blue, 0 }  ][line width=0.75]      (0, 0) circle [x radius= 3.02, y radius= 3.02]   ;
			\draw    (250,171.4) ;
			\draw [shift={(250,171.4)}, rotate = 0] [color={rgb, 255:red, 0; green, 0; blue, 0 }  ][fill={rgb, 255:red, 0; green, 0; blue, 0 }  ][line width=0.75]      (0, 0) circle [x radius= 3.02, y radius= 3.02]   ;
			\draw [shift={(250,171.4)}, rotate = 0] [color={rgb, 255:red, 0; green, 0; blue, 0 }  ][fill={rgb, 255:red, 0; green, 0; blue, 0 }  ][line width=0.75]      (0, 0) circle [x radius= 3.02, y radius= 3.02]   ;
			\draw    (166.6,171.4) ;
			\draw [shift={(166.6,171.4)}, rotate = 0] [color={rgb, 255:red, 0; green, 0; blue, 0 }  ][fill={rgb, 255:red, 0; green, 0; blue, 0 }  ][line width=0.75]      (0, 0) circle [x radius= 3.02, y radius= 3.02]   ;
			\draw [shift={(166.6,171.4)}, rotate = 0] [color={rgb, 255:red, 0; green, 0; blue, 0 }  ][fill={rgb, 255:red, 0; green, 0; blue, 0 }  ][line width=0.75]      (0, 0) circle [x radius= 3.02, y radius= 3.02]   ;
			\draw    (208,171.4) ;
			\draw [shift={(208,171.4)}, rotate = 0] [color={rgb, 255:red, 0; green, 0; blue, 0 }  ][fill={rgb, 255:red, 0; green, 0; blue, 0 }  ][line width=0.75]      (0, 0) circle [x radius= 3.02, y radius= 3.02]   ;
			\draw [shift={(208,171.4)}, rotate = 0] [color={rgb, 255:red, 0; green, 0; blue, 0 }  ][fill={rgb, 255:red, 0; green, 0; blue, 0 }  ][line width=0.75]      (0, 0) circle [x radius= 3.02, y radius= 3.02]   ;
			\draw [color={rgb, 255:red, 255; green, 0; blue, 0 }  ,draw opacity=1 ][line width=1.5]    (370,99.86) -- (390.37,151.7) ;
			\draw [color={rgb, 255:red, 0; green, 0; blue, 255 }  ,draw opacity=1 ][line width=1.5]    (370,99.86) -- (311.54,138.03) ;
			\draw    (311.54,138.03) ;
			\draw [shift={(311.54,138.03)}, rotate = 0] [color={rgb, 255:red, 0; green, 0; blue, 0 }  ][fill={rgb, 255:red, 0; green, 0; blue, 0 }  ][line width=0.75]      (0, 0) circle [x radius= 3.02, y radius= 3.02]   ;
			\draw [shift={(311.54,138.03)}, rotate = 0] [color={rgb, 255:red, 0; green, 0; blue, 0 }  ][fill={rgb, 255:red, 0; green, 0; blue, 0 }  ][line width=0.75]      (0, 0) circle [x radius= 3.02, y radius= 3.02]   ;
			\draw    (370,99.86) ;
			\draw [shift={(370,99.86)}, rotate = 0] [color={rgb, 255:red, 0; green, 0; blue, 0 }  ][fill={rgb, 255:red, 0; green, 0; blue, 0 }  ][line width=0.75]      (0, 0) circle [x radius= 3.02, y radius= 3.02]   ;
			\draw [shift={(370,99.86)}, rotate = 0] [color={rgb, 255:red, 0; green, 0; blue, 0 }  ][fill={rgb, 255:red, 0; green, 0; blue, 0 }  ][line width=0.75]      (0, 0) circle [x radius= 3.02, y radius= 3.02]   ;
			\draw    (390.37,151.7) ;
			\draw [shift={(390.37,151.7)}, rotate = 0] [color={rgb, 255:red, 0; green, 0; blue, 0 }  ][fill={rgb, 255:red, 0; green, 0; blue, 0 }  ][line width=0.75]      (0, 0) circle [x radius= 3.02, y radius= 3.02]   ;
			\draw [shift={(390.37,151.7)}, rotate = 0] [color={rgb, 255:red, 0; green, 0; blue, 0 }  ][fill={rgb, 255:red, 0; green, 0; blue, 0 }  ][line width=0.75]      (0, 0) circle [x radius= 3.02, y radius= 3.02]   ;
			
			\draw (320,131.4) node [anchor=north west][inner sep=0.75pt]  [font=\small]  {$v$};
			\draw (341,118.4) node [anchor=north west][inner sep=0.75pt]  [font=\small]  {$e$};
			\draw (350,173.4) node [anchor=north west][inner sep=0.75pt]  [font=\small]  {$u$};

		\end{tikzpicture}

	\end{center}

\caption{Case 3.3.3: the first edge of $X$ belongs to $M_v$, but the last does not.}
	\label{asd123iu132i}
	
\end{figure}

The four subcases in Case~3.3 exhaust all possibilities for the first and
last edges of \(X\). Together with Cases~1, 2, 3.1, and 3.2, this covers all
possible positions of \(v\) and \(u\) in \(V(P)\cup V(C)\), without
interchanging their roles. In every case we have obtained a perfect matching
\(M\) and an \(M\)-perfect flower containing \(x=q_2\). This contradicts
\(x\in PFF(G)\). Therefore no such edge \(e\) exists, and
\[
E(PFF(G),PF(G))\cap E(H)=\emptyset.\qedhere
\]
\end{proof}

Then, from \cref{largo} we obtain the main result
of this work.

\begin{theorem}\label{dettt}
	Let $G$ be a K\H{o}nig-Egerváry graph. Then
	\begin{eqnarray*}
		\det(G) & = & \det\left(G[PF(G)]\right)\det\left(G[PFF(G)]\right),\\
		\perm G & = & \perm{G[PF(G)]}\perm{G[PFF(G)]}.
	\end{eqnarray*}
\end{theorem}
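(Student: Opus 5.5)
We prove the factorization through the Sachs expansion of \cref{harary}, splitting into two cases according to whether $G$ has a perfect matching. Write $A:=PF(G)$ and $B:=PFF(G)$, so that $V(G)=A\cup B$ is a partition.

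\textbf{Case 1: $G$ has a perfect matching.} By \cref{largo}, no $H\in\Sa G$ uses an edge of $E(A,B)$. Every component of such an $H$ is connected, so it lies entirely inside $A$ or entirely inside $B$. Hence $H$ restricts to a pair $(H_A,H_B)$ with $H_A\in\Sa{G[A]}$ and $H_B\in\Sa{G[B]}$.

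Conversely, given $H_A\in\Sa{G[A]}$ and $H_B\in\Sa{G[B]}$, their disjoint union is a spanning subgraph of $G$ whose components are $K_2$'s and cycles, so it lies in $\Sa G$. These two maps are mutually inverse, which gives a bijection
\[
\Sa G\longleftrightarrow \Sa{G[A]}\times\Sa{G[B]}.
\]
Both statistics in \cref{harary} are additive under this bijection: $k(H)=k(H_A)+k(H_B)$ and $m(H)=m(H_A)+m(H_B)$. The sums in \cref{harary} therefore factor, giving both identities. The convention $\det(G[\emptyset])=1$ (and likewise for the permanent) handles the cases $A=\emptyset$ or $B=\emptyset$.

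\textbf{Case 2: $G$ has no perfect matching.} Then $2\mu(G)<|G|$, so by \cref{asodkjas} we have $\prk G<|G|$. Hence $\Sa G=\emptyset$, and $\det(G)=\perm G=0$. It remains to show that one of $G[A]$, $G[B]$ also has no Sachs subgraph. We argue by contradiction and suppose both $G[A]$ and $G[B]$ have Sachs subgraphs.

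\begin{itemize}
\item \emph{Step (a).} A Sachs subgraph of $G[A]\cup G[B]$ is spanning in $G$, so it has order $|G|$. This gives $\prk G=|G|$, a contradiction.
\end{itemize}

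So the contradiction is immediate: the union of the two Sachs subgraphs is itself a Sachs subgraph of $G$, which cannot exist. Therefore one factor is an empty sum, hence equal to $0$, and both identities read $0=0$.

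Note that this case needs no König–Egerváry structure beyond $\Sa G=\emptyset$. Indeed, whenever $\Sa G=\emptyset$, the converse map from Case 1 shows that at least one of $\Sa{G[A]}$, $\Sa{G[B]}$ must be empty.

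The only substantive input is \cref{largo}, which supplies the decoupling used in Case 1. The remaining work is checking that the bijection preserves $k$ and $m$, which holds because components are preserved exactly. I expect no real obstacle beyond the empty-set conventions.
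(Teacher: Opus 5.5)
Your proposal is correct and follows the paper's proof essentially step for step. In both, \cref{largo} gives the bijection $\Sa{G}\leftrightarrow\Sa{G[PF(G)]}\times\Sa{G[PFF(G)]}$ with additive $k$ and $m$, and when $\Sa{G}=\emptyset$ the union of Sachs subgraphs of the two parts shows one factor vanishes. The only cosmetic difference is that you split cases on the existence of a perfect matching and use \cref{asodkjas} to deduce $\Sa{G}=\emptyset$, whereas the paper splits on whether $\Sa{G}$ is empty and uses the same corollary to produce the perfect matching.
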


\begin{proof}
	Set
	\[
	G_1:=G[PF(G)]
	\qquad\text{and}\qquad
	G_2:=G[PFF(G)].
	\]
	
	If $\Sa{G}=\emptyset$, then by Theorem 3.1,
	\[
	\det(G)=0
	\qquad\text{and}\qquad
	\perm{G}=0.
	\]
	Moreover, at least one of the graphs $G_1$ and $G_2$ has no Sachs subgraphs.
	Indeed, if there existed
	\[
	H_1\in \Sa{G_1}
	\qquad\text{and}\qquad
	H_2\in \Sa{G_2},
	\]
	then $H_1\cup H_2$ would be a Sachs subgraph of $G$, a contradiction.
	Hence at least one of $\det(G_1),\det(G_2)$ is zero, and at least one of
	$\perm{G_1},\perm{G_2}$ is zero. Therefore,
	\[
	\det(G)=\det(G_1)\det(G_2)
	\qquad\text{and}\qquad
	\perm{G}=\perm{G_1}\perm{G_2}.
	\]
	
	Assume now that $\Sa{G}\neq\emptyset$. Since every Sachs subgraph is spanning,
	we have $\prk{G}=|G|$. Thus, by \cref{asoudh1iuh23},
	\[
	|G|=\prk{G}=2\mu(G),
	\]
	and therefore $G$ has a perfect matching.
	
	Let $H\in \Sa{G}$. By \cref{asoidj1oi23},
	\[
	E(PF(G),PFF(G))\cap E(H)=\emptyset.
	\]
	Hence every connected component of $H$ lies entirely in $PF(G)$ or entirely
	in $PFF(G)$. Therefore
	\[
	H=H_1\cup H_2,
	\]
	where
	\[
	H_1:=H[PF(G)]
	\qquad\text{and}\qquad
	H_2:=H[PFF(G)].
	\]
	Clearly,
	\[
	H_1\in \Sa{G_1}
	\qquad\text{and}\qquad
	H_2\in \Sa{G_2}.
	\]
	
	Conversely, if
	\[
	H_1\in \Sa{G_1}
	\qquad\text{and}\qquad
	H_2\in \Sa{G_2},
	\]
	then $H_1\cup H_2\in \Sa{G}$. Thus
	\[
	H\mapsto (H_1,H_2)
	\]
	is a bijection between $\Sa{G}$ and $\Sa{G_1}\times \Sa{G_2}$.
	
	Since the connected components of $H$ are precisely the connected components
	of $H_1$ together with those of $H_2$, we have
	\[
	k(H)=k(H_1)+k(H_2)
	\qquad\text{and}\qquad
	m(H)=m(H_1)+m(H_2).
	\]
	Hence
	\begin{eqnarray*}
		(-1)^{k(H)}2^{m(H)}
		&=&
		(-1)^{k(H_1)+k(H_2)}2^{m(H_1)+m(H_2)} \\
		&=&
		(-1)^{k(H_1)}2^{m(H_1)}
		(-1)^{k(H_2)}2^{m(H_2)},
	\end{eqnarray*}
	and also
	\[
	2^{m(H)}=2^{m(H_1)}2^{m(H_2)}.
	\]
	
	Using \cref{harary} and the above bijection, we obtain
	\begin{eqnarray*}
		\det(G)
		&=&
		\sum_{H\in \Sa{G}}(-1)^{k(H)}2^{m(H)} \\
		&=&
		\sum_{H_1\in \Sa{G_1}}
		\sum_{H_2\in \Sa{G_2}}
		(-1)^{k(H_1)}2^{m(H_1)}
		(-1)^{k(H_2)}2^{m(H_2)} \\
		&=&
		\left(
		\sum_{H_1\in \Sa{G_1}}
		(-1)^{k(H_1)}2^{m(H_1)}
		\right)
		\left(
		\sum_{H_2\in \Sa{G_2}}
		(-1)^{k(H_2)}2^{m(H_2)}
		\right) \\
		&=&
		\det(G_1)\det(G_2).
	\end{eqnarray*}
	Similarly,
	\begin{eqnarray*}
		\perm{G}
		&=&
		\sum_{H\in \Sa{G}}2^{m(H)} \\
		&=&
		\sum_{H_1\in \Sa{G_1}}
		\sum_{H_2\in \Sa{G_2}}
		2^{m(H_1)}2^{m(H_2)} \\
		&=&
		\left(
		\sum_{H_1\in \Sa{G_1}}2^{m(H_1)}
		\right)
		\left(
		\sum_{H_2\in \Sa{G_2}}2^{m(H_2)}
		\right) \\
		&=&
		\perm{G_1}\perm{G_2}.\qedhere
	\end{eqnarray*}
\end{proof}

By \cref{asokpjdoas} and \cref{largo}, one has the
additivity of $\mu(G)$ according to the perfect flower decomposition.

\begin{corollary}
	Let $G$ be a K\H{o}nig-Egerváry graph with a perfect matching. Then
	\[
	\mu(G)=\mu(G[PF(G)])+\mu(G[PFF(G)]).
	\]
\end{corollary}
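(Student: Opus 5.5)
Since $G$ has a perfect matching, fix one, say $M$. The plan is to show that $M$ splits as $M=M_1\cup M_2$ with $M_1$ a perfect matching of $G[PF(G)]$ and $M_2$ a perfect matching of $G[PFF(G)]$. Once this is established, we have
\[
\mu(G)=\frac{|G|}{2}=\frac{|PF(G)|}{2}+\frac{|PFF(G)|}{2}\leq \mu(G[PF(G)])+\mu(G[PFF(G)]).
\]
The reverse inequality $\mu(G[PF(G)])+\mu(G[PFF(G)])\leq\mu(G)$ holds for any partition of $V(G)$, because the union of a matching of $G[PF(G)]$ and a matching of $G[PFF(G)]$ is a matching of $G$. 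Together these give the claimed equality.

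To obtain the splitting, observe that $M$ is itself a Sachs subgraph of $G$: it is spanning and every component is a copy of $K_2$. Hence $M\in\Sa{G}$. By \cref{largo}, no edge of $M$ lies in $E(PFF(G),PF(G))$. Every vertex of $G$ is covered by exactly one edge of $M$, so each edge of $M$ has both endpoints in $PF(G)$ or both endpoints in $PFF(G)$. Let $M_1$ be the edges of $M$ inside $PF(G)$ and $M_2$ the edges inside $PFF(G)$. Then $M_1$ saturates every vertex of $PF(G)$ and $M_2$ saturates every vertex of $PFF(G)$. So each induced subgraph has a perfect matching, and its matching number is half its order.

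The real content is \cref{largo}, which we take as given. The only extra ingredient is the observation that a perfect matching is a Sachs subgraph, which the paper notes in the definition of $\Sa{G}$. \cref{asokpjdoas} is not needed beyond its use inside \cref{largo}. The degenerate case $PF(G)=\emptyset$ is covered by the convention that $\mu$ of the empty graph is $0$. No step presents a serious obstacle; the only point to check carefully is that "no crossing edge in $M$" together with "$M$ perfect" really forces both parts to be perfectly matched, and the counting argument above settles this.
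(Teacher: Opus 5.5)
Your proposal is correct and follows the paper's proof: the inequality $\mu(G)\geq\mu(G[PF(G)])+\mu(G[PFF(G)])$ comes from taking the union of matchings of the two disjoint induced subgraphs, and the reverse inequality comes from viewing a perfect matching $M$ as a Sachs subgraph and applying \cref{largo}, so that no edge of $M$ crosses between $PF(G)$ and $PFF(G)$. Your extra observation that the two pieces of $M$ are perfect matchings of $G[PF(G)]$ and $G[PFF(G)]$ is only a slightly more explicit form of the paper's bound $|M|\leq\mu(G[PF(G)])+\mu(G[PFF(G)])$.
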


\begin{proof}
	Let $M_1$ and $M_2$ be maximum matchings of $G[PF(G)]$ and $G[PFF(G)]$,
	respectively. Since $PF(G)$ and $PFF(G)$ are disjoint, $M_1\cup M_2$ is a
	matching of $G$. Hence
	\[
	\mu(G)\geq \mu(G[PF(G)])+\mu(G[PFF(G)]).
	\]
	
	On the other hand, let $M$ be a perfect matching of $G$. Since $G[M]\in\Sa{G}$,
	\cref{largo} yields
	\[
	E(PF(G),PFF(G))\cap M=\emptyset.
	\]
	Therefore,
	\[
	M=\bigl(M\cap E(G[PF(G)])\bigr)\cup \bigl(M\cap E(G[PFF(G)])\bigr),
	\]
	and so
	\[
	\mu(G)=|M|
	\leq \mu(G[PF(G)])+\mu(G[PFF(G)]).
	\]
	Thus,
	\[
	\mu(G)=\mu(G[PF(G)])+\mu(G[PFF(G)]).\qedhere
	\]
\end{proof}

To illustrate \cref{dettt}, consider the graph in \cref{asdouih12iu3}.
Here one has $\det\left(G[PFF(G)]\right)=-4$, while $\det\left(G[PF(G)]\right)=9$,
therefore, by \Cref{dettt}, it follows that
\[
\det\left(G\right)=-4\cdot9=-36.
\]

\begin{figure}[H]
	
	\begin{center}

		\tikzset{every picture/.style={line width=0.75pt}} 
		
		\begin{tikzpicture}[x=0.75pt,y=0.75pt,yscale=-1,xscale=1]
			
			\draw    (175.99,79.13) -- (212.43,84.26) ;
			\draw [shift={(212.43,84.26)}, rotate = 8.01] [color={rgb, 255:red, 0; green, 0; blue, 0 }  ][fill={rgb, 255:red, 0; green, 0; blue, 0 }  ][line width=0.75]      (0, 0) circle [x radius= 3.35, y radius= 3.35]   ;
			\draw [shift={(175.99,79.13)}, rotate = 8.01] [color={rgb, 255:red, 0; green, 0; blue, 0 }  ][fill={rgb, 255:red, 0; green, 0; blue, 0 }  ][line width=0.75]      (0, 0) circle [x radius= 3.35, y radius= 3.35]   ;
			\draw    (212.43,84.26) -- (249.83,106.44) ;
			\draw [shift={(249.83,106.44)}, rotate = 30.67] [color={rgb, 255:red, 0; green, 0; blue, 0 }  ][fill={rgb, 255:red, 0; green, 0; blue, 0 }  ][line width=0.75]      (0, 0) circle [x radius= 3.35, y radius= 3.35]   ;
			\draw [shift={(212.43,84.26)}, rotate = 30.67] [color={rgb, 255:red, 0; green, 0; blue, 0 }  ][fill={rgb, 255:red, 0; green, 0; blue, 0 }  ][line width=0.75]      (0, 0) circle [x radius= 3.35, y radius= 3.35]   ;
			\draw    (249.83,106.44) -- (290.63,103.58) ;
			\draw [shift={(290.63,103.58)}, rotate = 355.99] [color={rgb, 255:red, 0; green, 0; blue, 0 }  ][fill={rgb, 255:red, 0; green, 0; blue, 0 }  ][line width=0.75]      (0, 0) circle [x radius= 3.35, y radius= 3.35]   ;
			\draw [shift={(249.83,106.44)}, rotate = 355.99] [color={rgb, 255:red, 0; green, 0; blue, 0 }  ][fill={rgb, 255:red, 0; green, 0; blue, 0 }  ][line width=0.75]      (0, 0) circle [x radius= 3.35, y radius= 3.35]   ;
			\draw    (290.63,103.58) -- (329.39,82.11) ;
			\draw [shift={(329.39,82.11)}, rotate = 331.03] [color={rgb, 255:red, 0; green, 0; blue, 0 }  ][fill={rgb, 255:red, 0; green, 0; blue, 0 }  ][line width=0.75]      (0, 0) circle [x radius= 3.35, y radius= 3.35]   ;
			\draw [shift={(290.63,103.58)}, rotate = 331.03] [color={rgb, 255:red, 0; green, 0; blue, 0 }  ][fill={rgb, 255:red, 0; green, 0; blue, 0 }  ][line width=0.75]      (0, 0) circle [x radius= 3.35, y radius= 3.35]   ;
			\draw    (329.39,82.11) -- (363.39,86.41) ;
			\draw [shift={(363.39,86.41)}, rotate = 7.2] [color={rgb, 255:red, 0; green, 0; blue, 0 }  ][fill={rgb, 255:red, 0; green, 0; blue, 0 }  ][line width=0.75]      (0, 0) circle [x radius= 3.35, y radius= 3.35]   ;
			\draw [shift={(329.39,82.11)}, rotate = 7.2] [color={rgb, 255:red, 0; green, 0; blue, 0 }  ][fill={rgb, 255:red, 0; green, 0; blue, 0 }  ][line width=0.75]      (0, 0) circle [x radius= 3.35, y radius= 3.35]   ;
			\draw    (363.39,86.41) -- (383.78,123.61) ;
			\draw [shift={(383.78,123.61)}, rotate = 61.26] [color={rgb, 255:red, 0; green, 0; blue, 0 }  ][fill={rgb, 255:red, 0; green, 0; blue, 0 }  ][line width=0.75]      (0, 0) circle [x radius= 3.35, y radius= 3.35]   ;
			\draw [shift={(363.39,86.41)}, rotate = 61.26] [color={rgb, 255:red, 0; green, 0; blue, 0 }  ][fill={rgb, 255:red, 0; green, 0; blue, 0 }  ][line width=0.75]      (0, 0) circle [x radius= 3.35, y radius= 3.35]   ;
			\draw    (383.78,123.61) -- (419.14,147.21) ;
			\draw [shift={(419.14,147.21)}, rotate = 33.73] [color={rgb, 255:red, 0; green, 0; blue, 0 }  ][fill={rgb, 255:red, 0; green, 0; blue, 0 }  ][line width=0.75]      (0, 0) circle [x radius= 3.35, y radius= 3.35]   ;
			\draw [shift={(383.78,123.61)}, rotate = 33.73] [color={rgb, 255:red, 0; green, 0; blue, 0 }  ][fill={rgb, 255:red, 0; green, 0; blue, 0 }  ][line width=0.75]      (0, 0) circle [x radius= 3.35, y radius= 3.35]   ;
			\draw    (419.14,147.21) -- (431.38,181.55) ;
			\draw [shift={(431.38,181.55)}, rotate = 70.38] [color={rgb, 255:red, 0; green, 0; blue, 0 }  ][fill={rgb, 255:red, 0; green, 0; blue, 0 }  ][line width=0.75]      (0, 0) circle [x radius= 3.35, y radius= 3.35]   ;
			\draw [shift={(419.14,147.21)}, rotate = 70.38] [color={rgb, 255:red, 0; green, 0; blue, 0 }  ][fill={rgb, 255:red, 0; green, 0; blue, 0 }  ][line width=0.75]      (0, 0) circle [x radius= 3.35, y radius= 3.35]   ;
			\draw    (431.38,181.55) -- (408.26,202.3) ;
			\draw [shift={(408.26,202.3)}, rotate = 138.1] [color={rgb, 255:red, 0; green, 0; blue, 0 }  ][fill={rgb, 255:red, 0; green, 0; blue, 0 }  ][line width=0.75]      (0, 0) circle [x radius= 3.35, y radius= 3.35]   ;
			\draw [shift={(431.38,181.55)}, rotate = 138.1] [color={rgb, 255:red, 0; green, 0; blue, 0 }  ][fill={rgb, 255:red, 0; green, 0; blue, 0 }  ][line width=0.75]      (0, 0) circle [x radius= 3.35, y radius= 3.35]   ;
			\draw    (408.26,202.3) -- (376.31,182.27) ;
			\draw [shift={(376.31,182.27)}, rotate = 212.08] [color={rgb, 255:red, 0; green, 0; blue, 0 }  ][fill={rgb, 255:red, 0; green, 0; blue, 0 }  ][line width=0.75]      (0, 0) circle [x radius= 3.35, y radius= 3.35]   ;
			\draw [shift={(408.26,202.3)}, rotate = 212.08] [color={rgb, 255:red, 0; green, 0; blue, 0 }  ][fill={rgb, 255:red, 0; green, 0; blue, 0 }  ][line width=0.75]      (0, 0) circle [x radius= 3.35, y radius= 3.35]   ;
			\draw    (376.31,182.27) -- (353.19,148.65) ;
			\draw [shift={(353.19,148.65)}, rotate = 235.49] [color={rgb, 255:red, 0; green, 0; blue, 0 }  ][fill={rgb, 255:red, 0; green, 0; blue, 0 }  ][line width=0.75]      (0, 0) circle [x radius= 3.35, y radius= 3.35]   ;
			\draw [shift={(376.31,182.27)}, rotate = 235.49] [color={rgb, 255:red, 0; green, 0; blue, 0 }  ][fill={rgb, 255:red, 0; green, 0; blue, 0 }  ][line width=0.75]      (0, 0) circle [x radius= 3.35, y radius= 3.35]   ;
			\draw    (353.19,148.65) -- (313.07,133.62) ;
			\draw [shift={(313.07,133.62)}, rotate = 200.53] [color={rgb, 255:red, 0; green, 0; blue, 0 }  ][fill={rgb, 255:red, 0; green, 0; blue, 0 }  ][line width=0.75]      (0, 0) circle [x radius= 3.35, y radius= 3.35]   ;
			\draw [shift={(353.19,148.65)}, rotate = 200.53] [color={rgb, 255:red, 0; green, 0; blue, 0 }  ][fill={rgb, 255:red, 0; green, 0; blue, 0 }  ][line width=0.75]      (0, 0) circle [x radius= 3.35, y radius= 3.35]   ;
			\draw    (313.07,133.62) -- (278.39,131.48) ;
			\draw [shift={(278.39,131.48)}, rotate = 183.54] [color={rgb, 255:red, 0; green, 0; blue, 0 }  ][fill={rgb, 255:red, 0; green, 0; blue, 0 }  ][line width=0.75]      (0, 0) circle [x radius= 3.35, y radius= 3.35]   ;
			\draw [shift={(313.07,133.62)}, rotate = 183.54] [color={rgb, 255:red, 0; green, 0; blue, 0 }  ][fill={rgb, 255:red, 0; green, 0; blue, 0 }  ][line width=0.75]      (0, 0) circle [x radius= 3.35, y radius= 3.35]   ;
			\draw    (155.32,107.15) -- (175.99,79.13) ;
			\draw [shift={(175.99,79.13)}, rotate = 306.42] [color={rgb, 255:red, 0; green, 0; blue, 0 }  ][fill={rgb, 255:red, 0; green, 0; blue, 0 }  ][line width=0.75]      (0, 0) circle [x radius= 3.35, y radius= 3.35]   ;
			\draw [shift={(155.32,107.15)}, rotate = 306.42] [color={rgb, 255:red, 0; green, 0; blue, 0 }  ][fill={rgb, 255:red, 0; green, 0; blue, 0 }  ][line width=0.75]      (0, 0) circle [x radius= 3.35, y radius= 3.35]   ;
			\draw    (179.12,125.04) -- (155.32,107.15) ;
			\draw [shift={(155.32,107.15)}, rotate = 216.92] [color={rgb, 255:red, 0; green, 0; blue, 0 }  ][fill={rgb, 255:red, 0; green, 0; blue, 0 }  ][line width=0.75]      (0, 0) circle [x radius= 3.35, y radius= 3.35]   ;
			\draw [shift={(179.12,125.04)}, rotate = 216.92] [color={rgb, 255:red, 0; green, 0; blue, 0 }  ][fill={rgb, 255:red, 0; green, 0; blue, 0 }  ][line width=0.75]      (0, 0) circle [x radius= 3.35, y radius= 3.35]   ;
			\draw    (215.83,120.03) -- (179.12,125.04) ;
			\draw [shift={(179.12,125.04)}, rotate = 172.23] [color={rgb, 255:red, 0; green, 0; blue, 0 }  ][fill={rgb, 255:red, 0; green, 0; blue, 0 }  ][line width=0.75]      (0, 0) circle [x radius= 3.35, y radius= 3.35]   ;
			\draw [shift={(215.83,120.03)}, rotate = 172.23] [color={rgb, 255:red, 0; green, 0; blue, 0 }  ][fill={rgb, 255:red, 0; green, 0; blue, 0 }  ][line width=0.75]      (0, 0) circle [x radius= 3.35, y radius= 3.35]   ;
			\draw    (249.83,106.44) -- (215.83,120.03) ;
			\draw [shift={(215.83,120.03)}, rotate = 158.21] [color={rgb, 255:red, 0; green, 0; blue, 0 }  ][fill={rgb, 255:red, 0; green, 0; blue, 0 }  ][line width=0.75]      (0, 0) circle [x radius= 3.35, y radius= 3.35]   ;
			\draw [shift={(249.83,106.44)}, rotate = 158.21] [color={rgb, 255:red, 0; green, 0; blue, 0 }  ][fill={rgb, 255:red, 0; green, 0; blue, 0 }  ][line width=0.75]      (0, 0) circle [x radius= 3.35, y radius= 3.35]   ;
			\draw    (278.39,131.48) -- (264.79,166.53) ;
			\draw [shift={(264.79,166.53)}, rotate = 111.2] [color={rgb, 255:red, 0; green, 0; blue, 0 }  ][fill={rgb, 255:red, 0; green, 0; blue, 0 }  ][line width=0.75]      (0, 0) circle [x radius= 3.35, y radius= 3.35]   ;
			\draw [shift={(278.39,131.48)}, rotate = 111.2] [color={rgb, 255:red, 0; green, 0; blue, 0 }  ][fill={rgb, 255:red, 0; green, 0; blue, 0 }  ][line width=0.75]      (0, 0) circle [x radius= 3.35, y radius= 3.35]   ;
			\draw    (278.39,131.48) -- (249.83,106.44) ;
			\draw [shift={(249.83,106.44)}, rotate = 221.24] [color={rgb, 255:red, 0; green, 0; blue, 0 }  ][fill={rgb, 255:red, 0; green, 0; blue, 0 }  ][line width=0.75]      (0, 0) circle [x radius= 3.35, y radius= 3.35]   ;
			\draw [shift={(278.39,131.48)}, rotate = 221.24] [color={rgb, 255:red, 0; green, 0; blue, 0 }  ][fill={rgb, 255:red, 0; green, 0; blue, 0 }  ][line width=0.75]      (0, 0) circle [x radius= 3.35, y radius= 3.35]   ;
			\draw    (290.63,103.58) -- (278.39,131.48) ;
			\draw [shift={(278.39,131.48)}, rotate = 113.69] [color={rgb, 255:red, 0; green, 0; blue, 0 }  ][fill={rgb, 255:red, 0; green, 0; blue, 0 }  ][line width=0.75]      (0, 0) circle [x radius= 3.35, y radius= 3.35]   ;
			\draw [shift={(290.63,103.58)}, rotate = 113.69] [color={rgb, 255:red, 0; green, 0; blue, 0 }  ][fill={rgb, 255:red, 0; green, 0; blue, 0 }  ][line width=0.75]      (0, 0) circle [x radius= 3.35, y radius= 3.35]   ;
			\draw    (313.07,133.62) -- (290.63,103.58) ;
			\draw [shift={(290.63,103.58)}, rotate = 233.25] [color={rgb, 255:red, 0; green, 0; blue, 0 }  ][fill={rgb, 255:red, 0; green, 0; blue, 0 }  ][line width=0.75]      (0, 0) circle [x radius= 3.35, y radius= 3.35]   ;
			\draw [shift={(313.07,133.62)}, rotate = 233.25] [color={rgb, 255:red, 0; green, 0; blue, 0 }  ][fill={rgb, 255:red, 0; green, 0; blue, 0 }  ][line width=0.75]      (0, 0) circle [x radius= 3.35, y radius= 3.35]   ;
			\draw    (383.78,123.61) -- (353.19,148.65) ;
			\draw [shift={(353.19,148.65)}, rotate = 140.71] [color={rgb, 255:red, 0; green, 0; blue, 0 }  ][fill={rgb, 255:red, 0; green, 0; blue, 0 }  ][line width=0.75]      (0, 0) circle [x radius= 3.35, y radius= 3.35]   ;
			\draw [shift={(383.78,123.61)}, rotate = 140.71] [color={rgb, 255:red, 0; green, 0; blue, 0 }  ][fill={rgb, 255:red, 0; green, 0; blue, 0 }  ][line width=0.75]      (0, 0) circle [x radius= 3.35, y radius= 3.35]   ;
			\draw  [fill={rgb, 255:red, 80; green, 227; blue, 194 }  ,fill opacity=0.1 ][dash pattern={on 4.5pt off 4.5pt}] (356.79,187.47) .. controls (338.34,175.6) and (337.57,164.84) .. (322.96,158.69) .. controls (308.35,152.53) and (260.67,194.06) .. (252.21,177.14) .. controls (243.75,160.22) and (275.28,98.27) .. (287.58,88.27) .. controls (299.89,78.27) and (315.27,72.89) .. (332.18,69.81) .. controls (349.1,66.74) and (385.24,79.48) .. (390.63,100.24) .. controls (396.01,121.01) and (442.15,141) .. (445.99,162.53) .. controls (449.84,184.06) and (427.54,208.67) .. (406.19,214.72) .. controls (384.84,220.77) and (375.25,199.33) .. (356.79,187.47) -- cycle ;
			\draw  [fill={rgb, 255:red, 80; green, 227; blue, 194 }  ,fill opacity=0.1 ][dash pattern={on 4.5pt off 4.5pt}] (212.22,72.56) .. controls (230.46,76.5) and (276.05,91.79) .. (259.13,113.32) .. controls (242.21,134.85) and (163.78,146.38) .. (151.47,123.31) .. controls (139.17,100.24) and (145.6,89.09) .. (159.93,76.41) .. controls (174.26,63.72) and (193.98,68.62) .. (212.22,72.56) -- cycle ;
			
			\draw (160.36,144.35) node [anchor=north west][inner sep=0.75pt]    {$PFF( G)$};
			\draw (286.2,174.5) node [anchor=north west][inner sep=0.75pt]    {$PF( G)$};

		\end{tikzpicture}

	\end{center}

\caption{An example illustrating the determinant factorization via $PF(G)$ and $PFF(G)$.}
	\label{asdouih12iu3}
	
\end{figure}
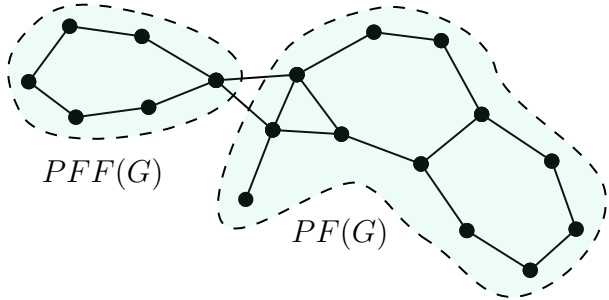

\noindent Therefore, by \cref{dettt}, studying when a K\H{o}nig-Egerváry
graph is unimodular reduces to studying when the graphs $G[PFF(G)]$
and $G[PF(G)]$ are unimodular. This motivates the following problems.

	\section{Open Problems}
	
	It is known that every K\H{o}nig-Egerváry graph with a unique matching is unimodular;
	however, the converse is not true. Indeed, unimodularity
	is only enough to guarantee the existence of a perfect matching.
	For example, this occurs in the graph obtained by considering two cycles
	$C_{4}$ with one common edge. Consequently, the study of
	unimodularity in K\H{o}nig--Egerváry graphs can be reduced to the analysis
	of the following two problems. When $G$ is a K\H{o}nig-Egerváry
	graph with a perfect matching, then by \cref{safe}, both
	graphs
	\[
	G[PF(G)],\qquad G[PFF(G)]
	\]
	\noindent also have a perfect matching and are both K\H{o}nig-Egerváry
	graphs, so the most natural thing in this context is to study the following.
	
	\begin{problem}
		Characterize the unimodular K\H{o}nig-Egerváry graphs $G$ such that $G[PF(G)]=G$. 
	\end{problem}
	
\begin{problem}
	Characterize unimodular perfect-flower-free K\H{o}nig-Egerváry graphs, that is, graphs $G$ such that $G[PFF(G)]=G$.
\end{problem}

	\medskip
	\noindent\textbf{Acknowledgment.}
	We thank the referee for the careful reading and helpful suggestions.

	\section*{Declaration of generative AI and AI-assisted technologies in the writing process}
	During the preparation of this work the authors used ChatGPT-3.5 in order to improve the grammar of several paragraphs of the text. After using this service, the authors reviewed and edited the content as needed and take full responsibility for the content of the publication.

\section*{Data availability}

Data sharing not applicable to this article as no datasets were generated or analyzed during the current study.

\section*{Declarations}

\noindent\textbf{Conflict of interest} \ The authors declare that they have no conflict of interest.

\bibliographystyle{apalike}

\bibliography{TAGcitasV2025}

\end{document}